\documentclass[]{amsart}

\makeatletter
\@addtoreset{equation}{section}

\makeatother

\usepackage{braket} % \Set{} is available
\usepackage{amssymb} % \varkappa is available 
\usepackage{color}
\usepackage[pdftex]{graphicx}
\usepackage{mathrsfs} % \mathscr{} is available 
\usepackage{url}

\newtheorem{theorem}{Theorem}[section]
\newtheorem{proposition}[theorem]{Proposition}

\newtheorem{lemma}[theorem]{Lemma}

\theoremstyle{definition}
\newtheorem{definition}[theorem]{Definition}
\newtheorem{remark}[theorem]{Remark}
\newtheorem{assumption}[theorem]{Assumption}

\newcommand{\R}{\mathbb{R}}

\newcommand{\C}{\mathbb{C}}

\newcommand{\N}{\mathbb{N}}
\newcommand{\vk}{\kappa}
\newcommand{\vp}{\varphi}

\newcommand{\va}{\alpha}
\newcommand{\vb}{\beta}
\newcommand{\vc}{\gamma}

\newcommand{\ve}{\varepsilon}
\newcommand{\vz}{\zeta}

\newcommand{\vt}{\tau}

\newcommand{\pd}{\partial}

%%%%%

\newcommand{\K}{\mathcal{K}}
\newcommand{\W}{\mathcal{W}}

\newcommand{\wto}{\rightharpoonup}

\newcommand{\dx}{\,dx}
\newcommand{\dy}{\,dy}
\newcommand{\dt}{\,dt}
\newcommand{\ds}{\,ds}
\newcommand{\dvt}{\,d\tau}
%%%%%
\newcommand{\SC}{\mathsf{SC}}
\newcommand{\Msym}{M_{\rm sym}}
\newcommand{\half}{\Big(\frac{1}{2}\Big)}

\begin{document}

\title[elastic graphs with the symmetric cone obstacle]{A remark on elastic graphs with the symmetric cone obstacle}

\author{Kensuke Yoshizawa}
\address[K.~Yoshizawa]{Institute of Mathematics for Industry, Kyushu University, 744 Motooka, Nishi-ku, Fukuoka 819-0395, Japan}
\email{k-yoshizawa@imi.kyushu-u.ac.jp}

\keywords{obstacle problem; elastic energy; shooting method; fourth order.}
\subjclass[2020]{49J40; 34B15; 53A04; 53C44}

\date{\today}

% REQUIRED
\begin{abstract}
This paper is concerned with the variational problem for the elastic energy defined on symmetric graphs under the unilateral constraint.
Assuming that the obstacle function satisfies the symmetric cone condition, we prove (i) uniqueness of minimizers, (ii) loss of regularity of minimizers, and give (iii) complete classification of  existence and non-existence of  minimizers in terms of the size of obstacle. 
As an application, we characterize the solution of the obstacle problem as equilibrium of the corresponding dynamical problem.
\end{abstract}

\maketitle

\section{Introduction} \label{Ssection:1}

For a given smooth curve $\vc\subset\R^2$, Bernoulli--Euler's elastic energy, also known as bending energy, is defined by 
\[\W(\vc)=\int_{\vc}\vk^2\ds,\]
where $\vk$ and $s$ denote the curvature and the arclength parameter of $\vc$, respectively. 
The variational problems of $\W$ have been studied as the model of the elastic rods and due to the geometric interest.
In this paper we consider curves given as graphs with fixed ends.
For a curve written as the graph of a function $u:[0,1]\to\R$, the elastic energy of the curve $(x, u(x))$ is given by 
$$
\W(u) := \int_0^1 \vk_u(x)^2 \sqrt{1+u'(x)^2} \dx = \int_0^1 \left(\frac{u''(x)}{\left(1+u'(x)^2\right)^{\frac{3}{2}}}\right)^2 \sqrt{1+u'(x)^2} \dx,
$$
where $\vk_u$ denotes the curvature of the curve $(x,u(x))$.

Recently, the obstacle problems for $\W$ were studied by \cite{DD, miura_16, Muller01}.
This paper is concerned with the minimization problem for $\W$ with the unilateral constraint that the curve lies above a given function $\psi:[0,1]\to\R$. 
That is, we consider  
\begin{align}\label{Seq:M}\tag{M}
\min_{v\in M_{\rm sym}} \W(v), 
\end{align}
where $\Msym$ is a convex set of $H(0,1):=H^2(0,1)\cap H^1_0(0,1)$ as follows:
\[
M_{\rm sym}:=\Set{v\in H(0,1) | v\geq \psi \ \ \text{in} \ \ [0,1], \quad v(x)=v(1-x) \ \ \text{for} \ \ 0\leq x \leq \frac{1}{2}}.
\]
In this paper we say that $u$ is a solution of \eqref{Seq:M} if $u\in\Msym$ attains $\inf_{v\in\Msym}\W(v)$. 
We shall assume the following:
%%%%%%%%%%%%%%%%%%%%%%%%%%%%%%%%%%%%%% 
\begin{assumption}\label{Sassumption:1.1}
We say that $\psi:[0,1]\to\R$ satisfies the \textit{symmetric cone condition} if the following hold:
\begin{itemize} 
\item[(i)] $\psi(x)=\psi(1-x)$ \quad for \quad $x\in[0,1]$;
\item[(ii)] $\psi(0)<0$, \ $\psi(\tfrac{1}{2})>0$ \ \ and 
\end{itemize}
\begin{align} \label{S0726-1}%\textcolor{red}{ 
\psi(x)=(1-2x ) \psi(0) +2x\,\psi\Big(\frac{1}{2}\Big) \quad \text{for}\quad 
0\leq x\leq \frac{1}{2}. %}
\end{align}
Let $\SC$ denote the class of functions satisfying the symmetric cone condition.
\end{assumption}
%%%%%%%%%%%%%%%%%%%%%%%%%%%%%%%%%%%%%%

Our concern in this paper is to study the following open problems on \eqref{Seq:M}: 
\textit{solvability in the case of $\psi(\frac{1}{2})=c_*$, uniqueness, regularity}, 
under the assumption $\psi\in\SC$. 
Here 
\begin{align}\label{Seq:c_*}
c_* :=\frac{2}{c_0} = 0.8346262684\ldots
\end{align}
and $c_0$ is a constant given by 
\[c_0:=\int_{\R}\frac{1}{(1+t^2)^{\frac{5}{4}}}\dt =\mathcal{B}\Big(\frac{1}{2}, \frac{3}{4}\Big)=\sqrt{\pi}\frac{\Gamma(3/4)}{\Gamma(5/4)}= 2.396280469\ldots .
\]

With the argument in \cite{DD} we see that \eqref{Seq:M} has a solution if $\psi\in\SC$ satisfies $\psi(\frac{1}{2})<c_*$.
On the other hand, according to \cite{Muller01}, %\cite[Theorem 1.1]{Muller01}
\eqref{Seq:M} has no solution if $\psi\in\SC$ satisfies $\psi(\frac{1}{2})>c_*$. 
Moreover, due to the lack of the convexity of $\W$, 
the uniqueness of solutions to \eqref{Seq:M} is an outstanding problem. 
In this paper, we are interested in the following: 
\begin{enumerate}
\item[(i)] \textit{Is problem \eqref{Seq:M} solvable under the assumption $\psi(\frac{1}{2})=c_*$?}
\item[(ii)] \textit{Does the uniqueness of solutions to problem \eqref{Seq:M} hold?}
\end{enumerate}
%Thus properties of solutions to \eqref{Seq:M} are known. 
%However, \eqref{Seq:M}  has some open questions. 
%For example, to the best our knowledge, one is  \textit{whether \eqref{Seq:M} is solvable or not under $\psi(\frac{1}{2})=c_*$}
%and another is \textit{how many solutions \eqref{Seq:M} has.} 
\noindent
If $u$ is a minimizer of $\W$ without obstacle, $u$ satisfies the equation on $(0,1)$
\begin{align}\label{Seq:1.1}
\frac{1}{\sqrt{1+(u'(x))^2}} \frac{d}{dx}\left( \frac{\vk_{u}'(x)}{\sqrt{1+(u'(x))^2}} \right) +\frac{1}{2}\vk_{u}(x)^3 =0
\end{align}
and the regularity of solutions of \eqref{Seq:1.1} is expected to be improved up to $C^{\infty}(0,1)$. 
However, the obstacle prevents us from applying arguments for \eqref{Seq:1.1} to problem~\eqref{Seq:M}.
We are also interested in the question
\begin{enumerate}
\item[(iii)]\textit{whether the regularity can be improved up to $C^{\infty}(0,1)$ or not.}
\end{enumerate} 
Although Dall'Acqua and Deckelnick have shown in \cite{DD} that third (weak) derivative of
solutions to \eqref{Seq:M} is of bounded variation in $(0,1)$,
it is not clear that solutions to \eqref{Seq:M} do not belong to $C^\infty(0,1)$ in general. 

%\subsection{Our settings and main results} 
%%%%%%%% Paragraph 4 ~Our setting and main result~ %%%%%%%

We are ready to state our main result of this paper: 

%%%%%%%%%%%%%%%%%%%%%%%%%%%%%%%%%%%%%%  
\begin{theorem} \label{Sthm:1.1} 
Let $\psi\in\SC$ satisfy 
\begin{align}\label{Stop}
\psi\half<c_* .
\end{align}
Then problem \eqref{Seq:M} has a unique solution $u$.
In addition, $u\in C^2([0,1])$ holds and the third derivative of $u$ belongs to $BV(0,1)$, while
\begin{align}\label{Seq:1.2}
u\notin C^3([0,1]).
\end{align}
%Furthermore, $u$ is also the unique solution of \eqref{Seq:M}.
On the other hand, if $\psi\in\SC$ satisfies 
\begin{align} \label{Seq:1.7}
\psi\half\geq c_*, 
\end{align}
then \eqref{Seq:M} has no solution. 
\end{theorem}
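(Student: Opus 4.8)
The plan is to reduce \eqref{Seq:M} to a one–parameter shooting problem for the fourth–order ODE \eqref{Seq:1.1} and to read off all the assertions from an explicit formula for the height of the shooting profiles. First I would use the symmetry built into $\Msym$ to work on $[0,\tfrac12]$ only, and recall from \cite{DD} that when $\psi\half<c_*$ a minimizer $u$ exists, belongs to $C^2([0,1])$ with $u'''\in BV(0,1)$, and satisfies the variational inequality for \eqref{Seq:M}. Writing $\Lambda:=\{x\in[0,1]:u(x)=\psi(x)\}$ for the coincidence set, $u$ then solves \eqref{Seq:1.1} classically on $(0,1)\setminus\Lambda$, the natural boundary condition $\vk_u(0)=\vk_u(1)=0$ holds because the constraint $u\ge\psi$ is inactive near the endpoints (recall $\psi(0)<0$), and symmetry gives $u'(\tfrac12)=0$.

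The heart of the proof — and the step I expect to be the main obstacle — is the structural claim that \emph{every} minimizer is a single concave elastica arc touching $\psi$ only at the tip: $\Lambda=\{\tfrac12\}$ and $\vk_u<0$ on $(0,\tfrac12)$. That $\tfrac12\in\Lambda$ follows by contradiction: otherwise $u$ would solve \eqref{Seq:1.1} in a neighbourhood of $\tfrac12$, hence be $C^3$ and symmetric there with $u'''(\tfrac12)=0$; inserting $u'''(\tfrac12)=0$ together with the vanishing of $\vk_u$ at the left endpoint of the last free subinterval abutting $\tfrac12$ into the first integral $\tfrac12(\dot\vk_u)^2+\tfrac18\vk_u^4\equiv E$ of \eqref{Seq:1.1} (dots denoting $d/ds$ along arclength) would force the tangent angle there to be $\pm\tfrac\pi2$, which is impossible since $u\in C^1$. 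To rule out any other contact point and any interior inflection of $\vk_u$ one uses the cone hypothesis in Assumption~\ref{Sassumption:1.1} crucially: since $\psi$ is affine on $(0,\tfrac12)$ with $\psi(0)<0$, a concave $u$ with $u(0)=0$ and $u(\tfrac12)=\psi\half$ satisfies the chord estimate $\psi(x)\le 2x\,\psi\half\le u(x)$ on $[0,\tfrac12]$ with equality only at $x=\tfrac12$, so such an arc is automatically admissible, while the remaining configurations are excluded by the $BV$–bound on $u'''$ together with an energy comparison. This is precisely where the lack of convexity of $\W$ must be confronted directly.

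Granting this, I would pass on $(0,\tfrac12)$ to the arclength $s$ and the tangent angle $\vs$ (so $u'=\tan\vs$ and $\vk_u=\dot\vs$); using $\vk_u(0)=0$, $\vk_u<0$, and the first integral one integrates $\vk_u^2=\sqrt{8E}\,\sin(\vs_0-\vs)$ with $\vs_0:=\arctan u'(0)\in(0,\tfrac\pi2)$. Rewriting $dx=\cos\vs\,ds=\cos\vs\,|\vk_u|^{-1}\,d\vs$, the normalisation $\int_0^{1/2}dx=\tfrac12$ fixes $E$ as a function of $\vs_0$, and then $\psi\half=u(\tfrac12)=\int_0^{1/2}u'\,dx$ becomes, after the substitution $\vp=\vs_0-\vs$ and an elementary integration,
\[
\psi\half=h(\vs_0):=\frac{2\sin^{3/2}\vs_0-\cos\vs_0\,J(\vs_0)}{2\left(2\cos\vs_0\sin^{1/2}\vs_0+\sin\vs_0\,J(\vs_0)\right)},\qquad J(\vs_0):=\int_0^{\vs_0}\sqrt{\sin\vp}\;d\vp .
\]
Consequently \eqref{Seq:M} is solvable for a prescribed value $\psi\half$ if and only if $\psi\half$ lies in the range of $h$ on $(0,\tfrac\pi2)$, and the solution is unique whenever $h$ is injective there.

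It remains to study $h=N/(2D)$, where $N$ and $D$ denote the numerator and the parenthesised denominator above. A short computation gives the identities $N'=D$, $D'=\cot\vs_0\,D-\sin^{-1/2}\vs_0$ and $D-\cot\vs_0\,N=J/\sin\vs_0$, from which
\[
h'(\vs_0)=\frac{N'D-ND'}{2D^2}=\frac{D\,J/\sin\vs_0+N\,\sin^{-1/2}\vs_0}{2D^2}>0\quad\text{on}\ \Bigl(0,\tfrac\pi2\Bigr),
\]
since $N>0$ there (as $N(0^+)=0$ and $N'=D>0$). Moreover $h(0^+)=0$, while from $J(\tfrac\pi2)=\int_0^{\pi/2}\sqrt{\sin\vp}\,d\vp=\int_0^{\pi/2}\sqrt{\cos\vp}\,d\vp=\tfrac12\int_{\R}(1+t^2)^{-5/4}\,dt=\tfrac{c_0}{2}$ one obtains $h(\tfrac\pi2^-)=1/J(\tfrac\pi2)=2/c_0=c_*$; note that $\vs_0\to\tfrac\pi2$ makes the profile acquire a vertical tangent at $x=0$, so the limiting shape leaves $\Msym$. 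Hence $h$ is a strictly increasing bijection of $(0,\tfrac\pi2)$ onto $(0,c_*)$. Therefore, if $\psi\half<c_*$ there is exactly one admissible $\vs_0$, hence a unique minimizer $u$; it lies in $C^2([0,1])$ with $u'''\in BV(0,1)$ by \cite{DD}, yet evaluating the first integral at $x=\tfrac12$ (where $\vs=0$) gives $u'''(\tfrac12^-)=\dot\vk_u(\tfrac12^-)=-\sqrt{2E(\vs_0)}\,\cos\vs_0\neq0=-u'''(\tfrac12^+)$, so $u\notin C^3([0,1])$, which is \eqref{Seq:1.2}. If instead $\psi\half\ge c_*$, then by the structural step any solution of \eqref{Seq:M} would coincide with one of these profiles with $h(\vs_0)=\psi\half\ge c_*$, which is impossible since $h<c_*$ on $(0,\tfrac\pi2)$; hence \eqref{Seq:M} has no solution, in accordance with \eqref{Seq:1.7} (for $\psi\half>c_*$ this recovers \cite{Muller01}).
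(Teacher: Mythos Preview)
Your overall architecture---reduce to \eqref{SBVP}, then study a one–parameter family of shooting profiles---is exactly the paper's, but the two steps differ substantially in how they are carried out, and one of yours has a genuine gap.

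\textbf{The structural step.} Your argument that $\tfrac12\in\Lambda$ asserts ``the vanishing of $\vk_u$ at the left endpoint of the last free subinterval abutting $\tfrac12$''. If that subinterval is $(x_1,1-x_1)$ with $x_1>0$, you only know $u(x_1)=\psi(x_1)$ and $u''(x_1)\ge0$; there is no reason for $\vk_u(x_1)=0$, so the first--integral contradiction does not go through. Likewise, ``the remaining configurations are excluded by the $BV$--bound on $u'''$ together with an energy comparison'' is not an argument. For \eqref{Seq:M} this gap is easy to close: concavity of minimizers is already in \cite{DD} (the paper quotes it as Proposition~\ref{Sprop:2.1}(ii)), and once $u$ is concave your chord estimate $\psi(x)<2x\,\psi(\tfrac12)\le u(x)$ on $(0,\tfrac12)$ immediately yields $\Lambda\cap(0,\tfrac12)=\emptyset$; then $\Lambda=\emptyset$ forces $u\equiv0$ by \cite{DG_07}, so $\Lambda=\{\tfrac12\}$. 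The paper works harder here because it treats the variational inequality \eqref{Seq:P}, for which concavity is not known a priori; its Lemma~\ref{Sprop:3.2} establishes $\Lambda=\{\tfrac12\}$ for \eqref{Seq:P} directly via the comparison principle~\eqref{Scom} and a delicate use of the inequality~\eqref{Seq:3.01}, and only afterwards deduces concavity (Proposition~\ref{Sprop:3.3}).

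\textbf{The shooting step.} Here your route is genuinely different and cleaner than the paper's. The paper parametrises by $\va=u'(0)$, passes through the substitution $y=G(u')$ to a second--order equation, and obtains
\[
u\bigl(\tfrac12;\va\bigr)=\frac{J(\va)}{2I(\va)},\qquad I(\va),J(\va)\ \text{as in \eqref{Seq:1104-3}};
\]
proving monotonicity in $\va$ (Proposition~\ref{Sprop:4.3}) then requires a case analysis on the sign of $\vc'(\va)$ and, in one subcase, a hypergeometric identity for $J$ (Lemma~\ref{Slem:4.3}, Appendix~\ref{Ssect:hypergeom}). Your use of the tangent angle $\vs_0=\arctan\va$ and the relation $\vk_u^2=\sqrt{8E}\,\sin(\vs_0-\vs)$ leads to the closed form $h(\vs_0)=N/(2D)$ for which the three identities $N'=D$, $D'=\cot\vs_0\,D-\sin^{-1/2}\vs_0$, $D-\cot\vs_0\,N=J/\sin\vs_0$ give $h'>0$ in one line; the endpoint values $h(0^+)=0$ and $h((\tfrac\pi2)^-)=2/c_0=c_*$ follow as you say (the evaluation $\int_0^{\pi/2}\sqrt{\sin\vp}\,d\vp=c_0/2$ is correct). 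This replaces the paper's Lemma~\ref{Slem:4.3}, Proposition~\ref{Sprop:4.3} and Proposition~\ref{Sprop:4.4} by an entirely elementary computation. One minor slip: your line ``$u'''(\tfrac12^-)=-\sqrt{2E}\cos\vs_0\neq0=-u'''(\tfrac12^+)$'' should read that symmetry forces a $C^3$ solution to satisfy $u'''(\tfrac12)=0$, whereas $u'''(\tfrac12^-)=-\sqrt{2E}\cos\vs_0\neq0$; the conclusion \eqref{Seq:1.2} is unaffected.
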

%%%%%%%%%%%%%%%%%%%%%%%%%%%%%%%%%%%%%%
Recently, Miura \cite{Miura2021} obtained the same uniqueness result in a different way; he focuses on the curvature and the proof is more geometric.
In Theorem~\ref{Sthm:1.1}, we restrict the class of obstacle to $\SC$ for a simplicity. 
For a more general assumption on $\psi$, see Remark~\ref{Srem:0726}. 
The reason why we employ Assumption~\ref{Sassumption:1.1} is to reduce problem \eqref{Seq:M} into the boundary value problem
\begin{align}\tag{BVP}\label{SBVP}
\begin{cases}
\frac{1}{\sqrt{1+(u'(x))^2}} \frac{d}{dx}\left( \frac{\vk_{u}'(x)}{\sqrt{1+(u'(x))^2}} \right) +\frac{1}{2}\vk_{u}(x)^3 =0 \quad \text{in}\quad 0<x<\frac{1}{2}, \\
u(0)=0,\quad u''(0)=0, \\
u(\frac{1}{2})=\psi(\tfrac{1}{2}), \quad u'(\tfrac{1}{2})=0,
\end{cases}
\end{align}
more precisely, see Section~\ref{Ssection:3}.
Hence we can obtain details of solutions to \eqref{Seq:M} via the shooting method, which is a method to know the properties of solutions of boundary value problems (see e.g.\  \cite{GG_06,NT_04,Pan_09,Schaaf}). 
As in \cite{DD, Muller01}, the study on problem \eqref{Seq:M} is done by variational approaches. 
One of novelties of this paper is to give another strategy, which makes use of the shooting method.
%Since \eqref{Seq:M} has been studied mainly by variational approaches (\cite{DD, Muller01}), our viewpoint, employing the shooting method, may give new aspects of \eqref{Seq:M}.
Furthermore, the shooting method enables us not only to give the complete classification of existence and non-existence of solutions to \eqref{Seq:M}, 
but also to make the graph of \eqref{Seq:M} via MAPLE since we can regard \eqref{Seq:M} as the Cauchy problem (see Figure~\ref{fig:1}).

\begin{figure}[htbp]
\centering
\includegraphics[width=4cm]{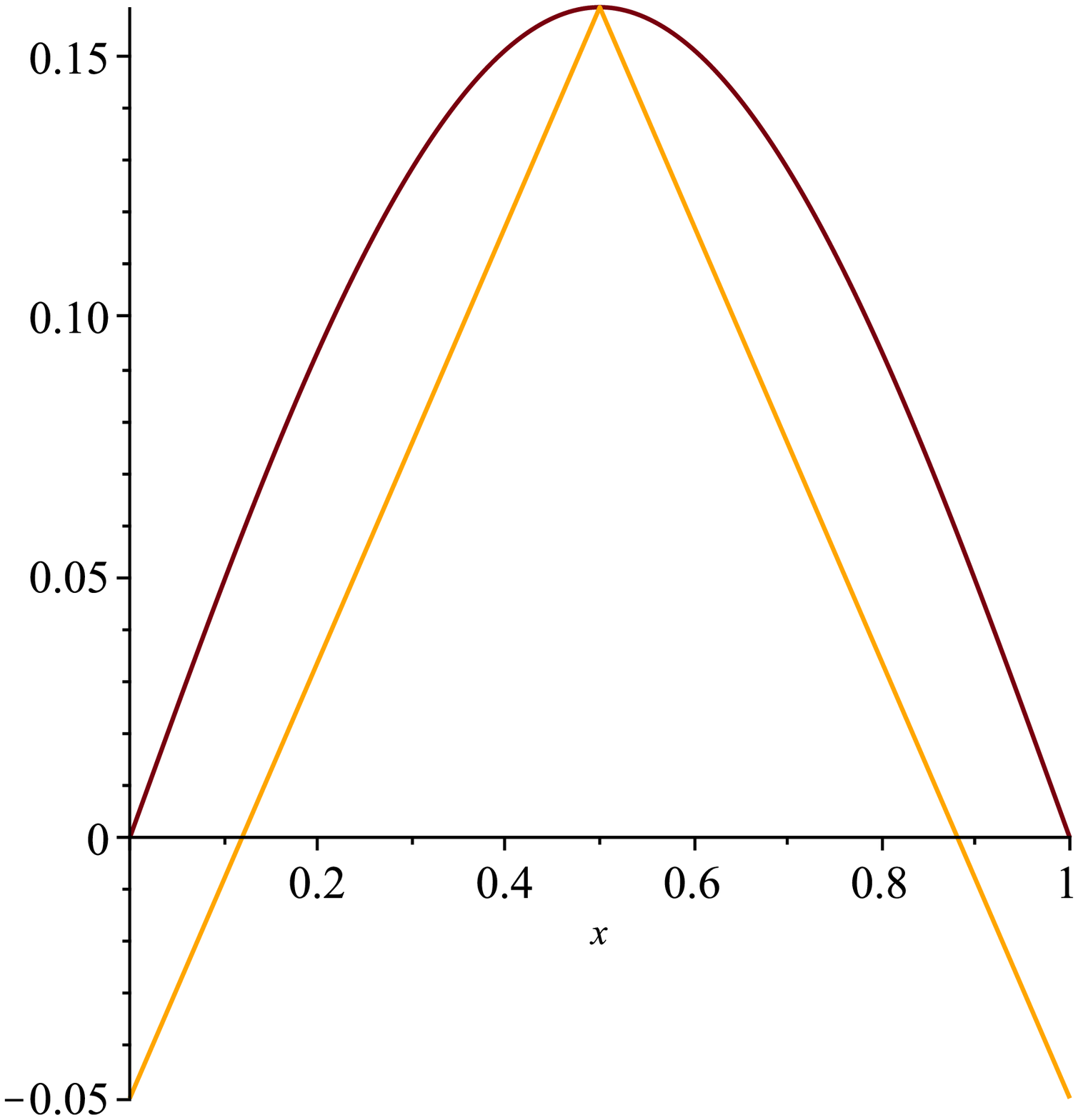} %Fig1113_2.pdf
\hspace{2cm}
\includegraphics[width=4cm]{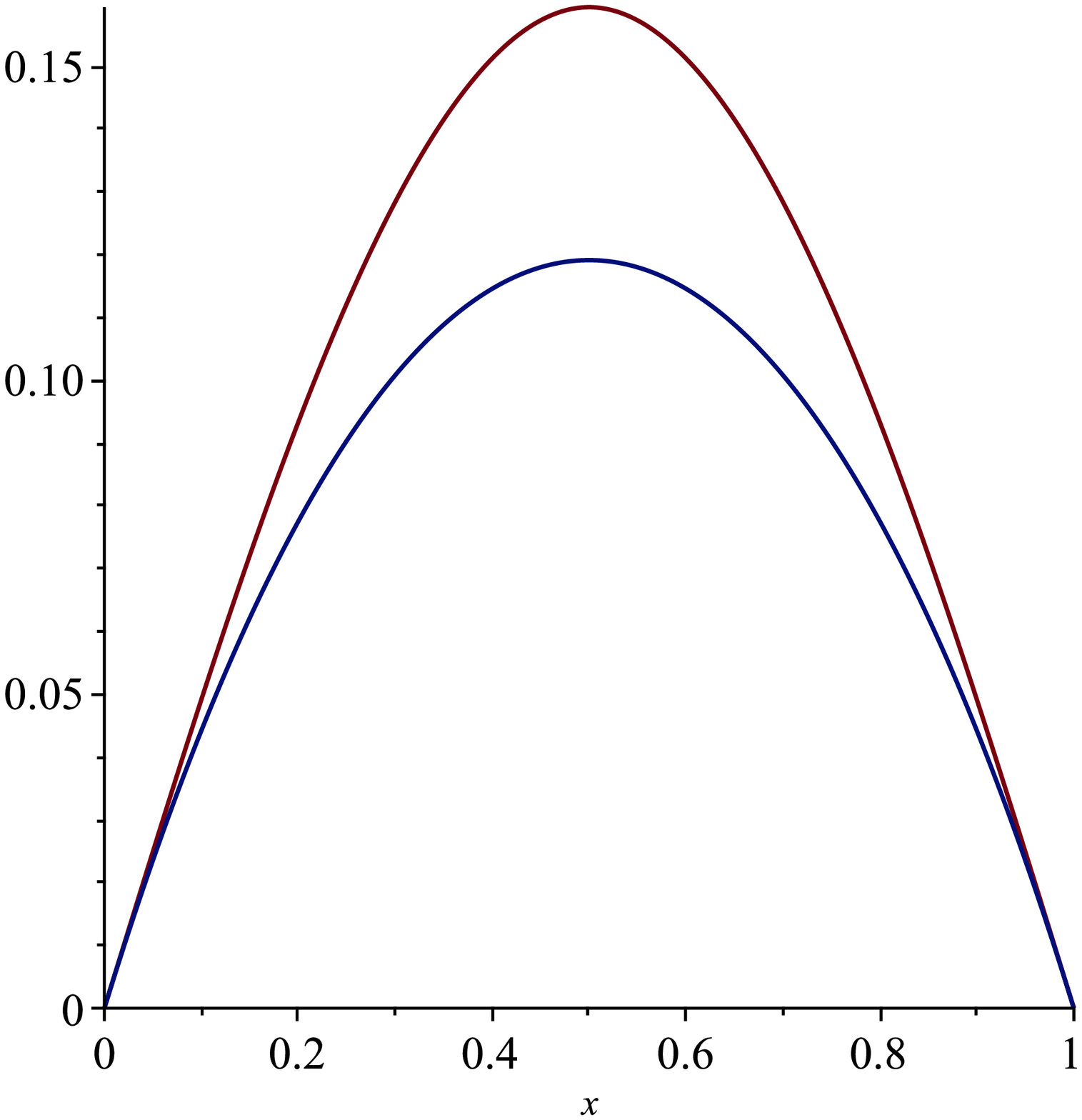}
%\vspace{1cm}
\caption{The solution of \eqref{Seq:M} for the obstacle $\psi(\frac{1}{2})\approx0.16$ (left). 
The top line is the solution $u$ of \eqref{Seq:M} with $u'(0)=0.5$ and the other is the symmetric solution $v$ of \eqref{Seq:1.1} with $v(0)=0$ and $v'(0)=0.5$ (right).}
\label{fig:1}
\end{figure}

%\textcolor{red}{
The shooting method is a useful tool to analyze the second order 
differential equations (see e.g.\  \cite{Azz, Bren, Brub, Pel}). 
On the other hand, it is not a standard matter to apply the shooting method to fourth order problems. 
Indeed, equation \eqref{Seq:1.1} is a quasilinear fourth order equation. 
However, using a geometric structure of \eqref{Seq:1.1}, we can reduce \eqref{Seq:1.1} into a second order 
semilinear equation. 
Then the standard shooting argument can work well for our problem. 
The reduction strategy is expected to be applicable to other fourth order geometric equations.
%We note here that applying the shooting method to \eqref{SBVP} is not a standard matter since the problem is related to the fourth order quasilinear equation.
%We have to deal with two unknown parameters as boundary conditions.
%However, taking into account that \eqref{Seq:1.1} derives from the first variation and consists of the derivatives of $u$, we can reduce \eqref{Seq:1.1} to a second order equation.
%Then we refer known results applying the shooting method to second order geometric equations (e.g., \cite{Azz, Bren, Brub, Pel}).
%}

In this paper we are also interested in the ``variational inequality''. 
For a solution $u$ of \eqref{Seq:M}, $u+\ve(v-u)$ also belongs to $\Msym$ for $v\in \Msym$ and for $\ve\in [0,1]$ by the convexity of $\Msym$. 
Then using the minimality of $u$, we have
\begin{align*}
\frac{d}{d\ve} \W(u+\ve(v-u))\Big|_{\ve=0}\geq 0, 
\end{align*}
which leads the inequality 
\begin{align}\label{Seq:1.4}
\W'(u)(v-u) 
 \geq 0 \quad \text{for}\quad  v\in \Msym,
\end{align}
where $\W'(u)(\vp)$ denotes the first variation of $\W$ at $u$ in direction $\vp$ given by
\begin{align}\label{Seq:1.5}
\W'(u)(\vp) 
=\int_0^1\bigg[ 2\frac{u''\vp''}{(1+(u')^2)^{\frac{5}{2}}} -5\frac{(u'')^2u'\vp'}{(1+(u')^2)^{\frac{7}{2}}} \bigg]\dx.
\end{align}
Hence we see that a solution $u$ of \eqref{Seq:M} also solves the following problem:
\begin{align}\label{Seq:P}\tag{P}
\text{find}\quad u\in\Msym\quad \text{such that}\quad \W'(u)(v-u)  \geq 0 \quad \text{for all}\quad v\in \Msym.
\end{align}

We can obtain the same results on \eqref{Seq:P} as well as Theorem \ref{Sthm:1.1}: 

%%%%%%%%%%%%%%%%%%%%%%%%%%%%%%%%%%%%%%  
\begin{theorem} \label{Sthm:1.2} 
Assume that $\psi\in\SC$. 
Then each of the following holds$:$
\begin{itemize}
\item[(i)] If \eqref{Stop} holds, then \eqref{Seq:P} has a unique solution, which is obtained in Theorem~\ref{Sthm:1.1}$;$
\item[(ii)] If \eqref{Seq:1.7} holds, then \eqref{Seq:P} has no solution.
\end{itemize}
\end{theorem}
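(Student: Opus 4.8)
The plan is to leverage Theorem~\ref{Sthm:1.1} together with the observation, already recorded in the excerpt, that every solution of \eqref{Seq:M} is a solution of \eqref{Seq:P}. Thus for part~(i) it suffices to show that, under \eqref{Stop}, every solution of the variational inequality \eqref{Seq:P} is in fact a minimizer of $\W$ over $\Msym$; uniqueness then follows immediately from the uniqueness in Theorem~\ref{Sthm:1.1}. For part~(ii), if \eqref{Seq:P} had a solution under \eqref{Seq:1.7}, we would derive from it a solution of \eqref{Seq:M}, contradicting the non-existence half of Theorem~\ref{Sthm:1.1}; so again it is enough to prove that solutions of \eqref{Seq:P} coincide with solutions of \eqref{Seq:M}.

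The heart of the matter is therefore the implication \eqref{Seq:P}~$\Rightarrow$~\eqref{Seq:M}, i.e. that a critical point of $\W$ in the sense of the variational inequality is automatically a global minimizer on the convex set $\Msym$. Since $\W$ is not convex, this is not automatic and must use the structure of the problem. I would proceed by first showing that any $u$ solving \eqref{Seq:P} is, after the symmetric reduction described in Section~\ref{Ssection:3}, a solution of \eqref{SBVP} on the noncoincidence set, with the natural (third-order) transmission conditions at the boundary of the coincidence set $\{u=\psi\}$; in particular $u$ inherits the regularity $u\in C^2([0,1])$ with $u'''\in BV(0,1)$. Then I would invoke the shooting-method analysis that underlies Theorem~\ref{Sthm:1.1}: the classification of solutions of \eqref{SBVP} (parametrized by the shooting data, e.g. $u'(0)$) shows that for $\psi(\tfrac12)<c_*$ there is exactly one admissible solution, and it is the minimizer. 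The same rigidity shows that a solution of \eqref{Seq:P} must be this one, hence a minimizer of \eqref{Seq:M}; and for $\psi(\tfrac12)\geq c_*$ there is no admissible solution of the reduced problem at all, giving part~(ii).

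A point requiring care is that a solution of \eqref{Seq:P} need not a priori be symmetric, since \eqref{Seq:P} is posed on $\Msym$ but the Euler--Lagrange operator does not see the symmetry constraint directly; one should check that the symmetry $v(x)=v(1-x)$ is preserved, either because the admissible variations $v-u$ with $v\in\Msym$ already force the relevant Euler--Lagrange identity to hold symmetrically, or by a reflection/uniqueness argument. Another delicate issue is the behaviour at the free boundary between $\{u>\psi\}$ and $\{u=\psi\}$: on $\{u=\psi\}$ the obstacle is the affine cone, and one must verify that the contact set is (for $\psi\in\SC$) either empty or reduces to the single apex point $x=\tfrac12$, so that $u$ solves \eqref{SBVP} on all of $(0,\tfrac12)$ and the transmission conditions degenerate appropriately.

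The main obstacle I expect is exactly this passage from the local/critical-point information encoded in \eqref{Seq:P} to the global minimality: the lack of convexity of $\W$ means one cannot simply integrate the variational inequality, and the argument must instead exhaust all solutions of the reduced second-order shooting problem and identify which carries the least energy. In other words, the real work is to show that the set of solutions of \eqref{Seq:P} is no larger than the set of solutions of \eqref{Seq:M}, and this reduces to the same ODE classification that proves Theorem~\ref{Sthm:1.1}, applied now to the broader class of functions satisfying only the variational inequality rather than the minimality.
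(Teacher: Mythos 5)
Your underlying strategy in the second and third paragraphs --- reduce every solution of \eqref{Seq:P}, via the coincidence-set and concavity analysis, to the boundary value problem \eqref{SBVP} on $[0,\tfrac12]$, and then run the shooting classification in the parameter $\va=u'(0)$ to conclude that there is exactly one admissible solution when $\psi(\tfrac12)<c_*$ and none when $\psi(\tfrac12)\geq c_*$ --- is precisely the paper's proof (Lemma~\ref{Sprop:3.2}, Proposition~\ref{Sprop:3.3}, and Propositions~\ref{Sprop:4.2}--\ref{Sprop:4.4}). The delicate points you flag are exactly the ones the paper addresses: the contact set consists only of the apex $x=\tfrac12$ (Lemma~\ref{Sprop:3.2}), and the passage from symmetric competitors to arbitrary ones is the equivalence of \eqref{Seq:P} with \eqref{Seq:P'} proved in Section~\ref{Ssection:2}. (Note that solutions of \eqref{Seq:P} lie in $\Msym$ by definition, so symmetry itself is not in question; the only issue is which test functions may be used.)

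However, your top-level framing runs the logic backwards and, as stated, is circular in this paper: you propose to prove that every solution of \eqref{Seq:P} solves \eqref{Seq:M} and then quote the uniqueness and non-existence parts of Theorem~\ref{Sthm:1.1}, but the paper obtains Theorem~\ref{Sthm:1.1} as a corollary of Theorem~\ref{Sthm:1.2}; in particular the uniqueness of minimizers and the non-existence at the borderline height $\psi(\tfrac12)=c_*$ are not available before Theorem~\ref{Sthm:1.2} is proved (the only external inputs are the existence of a minimizer for $\psi(\tfrac12)<c_*$, Proposition~\ref{Sprop:2.1} from \cite{DD}, and non-existence for $\psi(\tfrac12)>c_*$ from \cite{Muller01}). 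The repair is already contained in your own description: the shooting classification applied to all solutions of \eqref{Seq:P} yields uniqueness and non-existence directly, without any appeal to Theorem~\ref{Sthm:1.1}, and existence of a solution of \eqref{Seq:P} under \eqref{Stop} then follows from Proposition~\ref{Sprop:2.1} together with the observation that minimizers solve \eqref{Seq:P}. Phrased this way, the statement ``every solution of \eqref{Seq:P} is the minimizer'' is a consequence (the unique \eqref{Seq:P}-solution coincides with the minimizer), not an ingredient; no direct proof that critical points of the nonconvex functional are global minimizers is needed, which is what makes the argument go through.
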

%%%%%%%%%%%%%%%%%%%%%%%%%%%%%%%%%%%%%%
Since every solution of \eqref{Seq:M} also solves \eqref{Seq:P}, we shall prove Theorem~\ref{Sthm:1.2} and then Theorem~\ref{Sthm:1.1} can be deduced as a corollary of Theorem~\ref{Sthm:1.2}. 
%Very recently, a similar result is obtained by Miura \cite{Miura2021} and M\"uller \cite{Mar20X} by a different approach, respectively (see Remark~\ref{Srem:4.9} for the difference).
The uniqueness of solutions of \eqref{Seq:P} is so important that the minimizer of $\W$ in $\Msym$ can be also characterized as the equilibrium of the corresponding parabolic problem (see Section~\ref{Ssection:5}).
Very recently M\"uller \cite{Mar20X} also considered \eqref{Seq:P} and the corresponding parabolic problem under a bit different assumption on $\psi$.
In \cite{Mar20X} he approached problem \eqref{Seq:P} by another way which is based on Talenti's symmetrization.

This paper is organized as follows: 
In Section~\ref{Ssection:2}, we collect notation and known results which are used in this paper. 
In Section~\ref{Ssection:3}, we identify coincidence sets of solutions to \eqref{Seq:P} and prove the concavity of solutions. 
In Section~\ref{Ssection:4}, we prove the uniqueness and regularity of \eqref{Seq:P}, using the shooting method. 
Finally, we apply these results to a parabolic problem in Section~\ref{Ssection:5}: we show that the solution of the corresponding dynamical problem converges to the solution of \eqref{Seq:M}.

\subsection*{Acknowledgments}
The author would like to thank Professor Shinya Okabe for fruitful discussions. 
The author would also like to thank referees for their careful reading and useful comments.
The author was supported by JSPS KAKENHI Grant Number 19J20749.

%%%%%%%%%%%%%%%%%%%%%%%%%%%%%%%%%%%%%%%%%%%%%%%%%%%%%%%%%%%%%%%%%%%%%%%%%%%%%%%%%%%%%
%%%%%%%%%%%%%%%%%%%%%%%%%%%%%%%%%%%%%%%%%%%%%%%%%%%%%%%%%%%%%%%%%%%%%%%%%%%%%%%%%%%%%
%%%%%%%%%%%%%%%%%%%%%%%%%%%%%%%%%%%%%%%%%%%%%%%%%%%%%%%%%%%%%%%%%%%%%%%%%%%%%%%%%%%%%

\section{Preliminaries} \label{Ssection:2}

In this section, we collect notation and some known properties on \eqref{Seq:M} and \eqref{Seq:P}. 

First, we see the relationship between \eqref{Seq:1.1} and the first variation of $u$. 
For $\vp\in C^{\infty}_{\rm c}(0,1)$ and a sufficiently smooth $u$, it follows from integration by parts that
\begin{align}\label{Seq:2.02}
\begin{split}
\W'(u)(\vp) 
&=\int_0^1\bigg( 2\frac{u''}{(1+(u')^2)^{\frac{5}{2}}}\vp'' -5\frac{(u'')^2u'}{(1+(u')^2)^{\frac{7}{2}}}\vp' \bigg)\dx \\
&=\int_0^1\bigg( -2\frac{u'''}{(1+(u')^2)^{\frac{5}{2}}} +5\frac{(u'')^2u'}{(1+(u')^2)^{\frac{7}{2}}} \bigg)\vp'\dx \\
&= \int_0^1 2\left(\frac{1}{\sqrt{1+(u')^2}} \frac{d}{dx}\bigg( \frac{\vk_{u}'}{\sqrt{1+(u')^2}} \bigg) +\frac{1}{2}\vk_{u}^3 \right)\vp \dx. 
\end{split}
\end{align}
%That is, $\int_0^1 \nabla\W(u)(\vp)\dx =0$ can be understood as a weak form of \eqref{Seq:1.1}. 

% Although the following lemma has been proven in \cite{DD}, we leave the proof for the reader's convenience:
%%%%%%%%%%%%%%%%%%%%%%%%%%%%%%%%%%%%%%  
\begin{lemma}
$u$ is a solution of \eqref{Seq:P} if and only if $u$ solves
\begin{align}\label{Seq:P'}
\text{find}\quad u\in\Msym\quad \text{such that}\quad \W'(u)(v-u)  \geq 0 \quad \text{for all}\quad v\in M,
\end{align}
where  $M$ is the convex set of $H(0,1)$ defined by 
\[
M:=\Set{v\in H(0,1) | v\geq \psi \ \ \text{in} \ \ [0,1] }.
\]
\end{lemma}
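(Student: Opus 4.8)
The strategy is to exploit the inclusion $\Msym\subset M$ together with the symmetry built into the obstacle $\psi$ (Assumption~\ref{Sassumption:1.1}(i)) and hence into any solution $u\in\Msym$. The implication ``\eqref{Seq:P'}$\Rightarrow$\eqref{Seq:P}'' is immediate: if $u\in\Msym$ satisfies $\W'(u)(v-u)\ge 0$ for all $v\in M$, then in particular this holds for all $v\in\Msym\subset M$, so $u$ solves \eqref{Seq:P}. The content of the lemma is therefore the converse.

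For ``\eqref{Seq:P}$\Rightarrow$\eqref{Seq:P'}'', fix a solution $u\in\Msym$ of \eqref{Seq:P} and an arbitrary $w\in M$. I would introduce the symmetrization
\[
\tilde w(x):=\tfrac12\big(w(x)+w(1-x)\big),\qquad x\in[0,1],
\]
and first check that $\tilde w\in\Msym$. Indeed, the reflection $x\mapsto w(1-x)$ preserves $H(0,1)$ (it keeps the zero boundary data and the $H^2$-regularity), so $\tilde w\in H(0,1)$; $\tilde w$ is symmetric by construction; and since $\psi(x)=\psi(1-x)$ we get $\tilde w(x)\ge\tfrac12\big(\psi(x)+\psi(1-x)\big)=\psi(x)$ on $[0,1]$. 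Hence $\tilde w\in\Msym$, and the hypothesis on $u$ yields $\W'(u)(\tilde w-u)\ge 0$.

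It then remains to establish the identity $\W'(u)(\tilde w-u)=\W'(u)(w-u)$, which is where the symmetry of $u$ enters. Write $R\vp(x):=\vp(1-x)$; since $u$ is symmetric we have $Ru=u$, so $\tilde w-u=\tfrac12(w-u)+\tfrac12 R(w-u)$. Because $\vp\mapsto\W'(u)(\vp)$ is linear by \eqref{Seq:1.5}, it suffices to show $\W'(u)(R\vp)=\W'(u)(\vp)$ for every $\vp\in H(0,1)$. This is a direct change of variables $y=1-x$ in \eqref{Seq:1.5}, using $(R\vp)'(x)=-\vp'(1-x)$, $(R\vp)''(x)=\vp''(1-x)$ together with the symmetry relations $u'(1-x)=-u'(x)$ and $u''(1-x)=u''(x)$, valid a.e.\ for $u\in\Msym$: the minus sign produced by $(R\vp)'$ in the second term of \eqref{Seq:1.5} is exactly cancelled by the minus sign from $u'(1-x)=-u'(x)$, while the even powers of $u'$ are unaffected. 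Combining, $\W'(u)(w-u)=\W'(u)(\tilde w-u)\ge 0$ for all $w\in M$, i.e.\ $u$ solves \eqref{Seq:P'}. No step here is genuinely hard; the only point that needs a little care is the bookkeeping of signs in this last change of variables (and the routine verification that the reflection and the symmetry of $u$ act on the weak derivatives as stated), which is precisely what makes the linear functional $\W'(u)$ invariant under $R$.
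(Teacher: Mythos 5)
Your proposal is correct and follows essentially the same route as the paper: symmetrize the test function $w\in M$ to get an element of $\Msym$, and use the reflection change of variables $y=1-x$ together with $u'(1-x)=-u'(x)$, $u''(1-x)=u''(x)$ to see that the first variation at the symmetric $u$ is unchanged (the paper phrases this as $\W'(u)(v_2)=0$ for the antisymmetric part $v_2=\tfrac12(w-Rw)$, which is the same identity by linearity). The sign bookkeeping you describe matches the paper's computation.
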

%%%%%%%%%%%%%%%%%%%%%%%%%%%%%%%%%%%%%%
\begin{proof}
Since we see at once that the sufficiency of \eqref{Seq:P'} is clear, we show the necessity of \eqref{Seq:P'}. 
Let $u$ be a solution of \eqref{Seq:P} and fix $v\in M$ arbitrarily. 
Set
\[ v_1(x):= \frac{1}{2}\Big( v(x)+v(1-x)\Big), \quad v_2(x):=v(x)-v_1(x). \]
Then we find $v_1\in\Msym$. 
Taking $v_1$ as the test function in \eqref{Seq:1.4}, we have
\[ 0\leq \W'(u)(v_1-u)=\W'(u)(v-u)-\W'(u)(v_2)\]
and hence it suffices to show $\W'(u)(v_2)=0$. 
Since $u\in\Msym$ and $v_2$ satisfies $v_2(1-x)=-v_2(x)$, in view of \eqref{Seq:1.5} we see that
\begin{align*}
\int_{\frac{1}{2}}^1\bigg( 2\frac{u''v_2''}{(1+(u')^2)^{\frac{5}{2}}} &-5\frac{(u'')^2u'v_2'}{(1+(u')^2)^{\frac{7}{2}}} \bigg)\dx\\
&=-\int_0^{\frac{1}{2}}\bigg( 2\frac{u''v_2''}{(1+(u')^2)^{\frac{5}{2}}} -5\frac{(u'')^2u'v_2'}{(1+(u')^2)^{\frac{7}{2}}} \bigg)\dx,
\end{align*}
which clearly yields $\W'(u)(v_2)=0$.
Therefore we obtain 
\begin{align}\label{Seq:2.2}
\W'(u)(v-u) 
 \geq 0 \quad \text{for}\quad  v\in M.
\end{align}
The proof is complete.
\end{proof}

Next we define a useful function $G$ introduced in \cite{DG_07}. 
Let $G:\R\to(-\tfrac{c_0}{2}, \tfrac{c_0}{2})$ be 
\begin{align}\label{Sdef-G}
 G(x):=\int_0^x \frac{1}{(1+t^2)^{\frac{5}{4}}} \dt,    
\end{align}
where $c_0:=\int_{\R}(1+t^2)^{-\frac{5}{4}}\dt$. 
The function $G$ is bijective and strictly increasing
since $G'(x)>0$.
Therefore $G^{-1}$ exists, is smooth, and satisfies
\begin{align}\label{Seq:1.05}
\dfrac{d}{dx}G^{-1}(x) = \Big( 1+G^{-1}(x)^2 \Big)^{\frac{5}{4}}.
\end{align}

%%%%%%%%%%%%%%%%%%%%%%%%%%%%%%%%%%%%%%  
\begin{proposition}[Known results on \eqref{Seq:M}, \cite{DD}] \label{Sprop:2.1} 
Assume that $\psi\in\SC$ and let $c_*$ be the constant given by \eqref{Stop}. 
Then the following hold$:$
\begin{itemize}
\item[(i)] If $\psi$ satisfies $\psi(\frac{1}{2})<c_*$, then \eqref{Seq:M} has a solution.
\item[(ii)] Solutions of \eqref{Seq:M} are concave.
\end{itemize}
\end{proposition}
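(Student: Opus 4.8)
The plan is to establish (ii) first and then deduce (i) by the direct method, the essential input being a sharp lower bound for $\W$ on concave competitors. For (ii), given a solution $u$ of \eqref{Seq:M} I would pass to its least concave majorant $\hat u$ on $[0,1]$: it is again symmetric (by uniqueness of the majorant) and satisfies $\hat u\ge u\ge\psi$, and — the one genuinely delicate point — it lies in $H(0,1)$, which one checks using $u\in C^1([0,1])$ together with structural information on the minimizer to rule out concave corners of $\hat u$ (cf.\ \cite{DD}). Granting $\hat u\in\Msym$, the comparison is immediate: $\hat u$ is affine on each component of $\{\hat u>u\}$, so its curvature vanishes there, while $\hat u$ agrees with $u$ (together with its first and a.e.\ second derivatives) on $\{\hat u=u\}$; hence $\W(\hat u)\le\W(u)$, with equality only if $u$ is affine on each component of $\{\hat u>u\}$, i.e.\ $u=\hat u$. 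By minimality $u=\hat u$, which is concave.

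For (i) I would run the direct method. First $\Msym\ne\emptyset$ (e.g.\ $4\psi(\tfrac12)x(1-x)\in\Msym$). Take a minimizing sequence and replace each term by a concave competitor in $\Msym$ of no larger energy, as in (ii); the terms $u_k$ may then be assumed concave, so $u_k(\tfrac12)=\max u_k\ge\psi(\tfrac12)>0$ and, on $[0,\tfrac12]$, $u_k'$ is nonincreasing with $u_k'(\tfrac12)=0$ and $\alpha_k:=u_k'(0^+)>0$. The key estimate is that, for every concave $v\in\Msym$,
\begin{align*}
\W(v)=2\int_0^{1/2}\frac{(v'')^2}{(1+(v')^2)^{5/2}}\dx\ \ge\ 4\left(\int_0^{1/2}\frac{|v''|}{(1+(v')^2)^{5/4}}\dx\right)^{2}=4\,G\!\left(v'(0^+)\right)^{2},
\end{align*}
by Cauchy--Schwarz, the last equality because $\frac{d}{dx}G(v')=\frac{v''}{(1+(v')^2)^{5/4}}$ while $v'(\tfrac12)=0$ and $v''\le 0$. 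This threshold is matched by the ``optimal arch'' $v_\alpha$ given on $[0,\tfrac12]$ by $v_\alpha'(x):=G^{-1}\!\left(G(\alpha)(1-2x)\right)$ and extended symmetrically: using \eqref{Seq:1.05} one finds $v_\alpha$ concave, $v_\alpha(0)=0$, $v_\alpha(\tfrac12)=\bigl(1-(1+\alpha^2)^{-1/4}\bigr)/G(\alpha)$, equality in the estimate above so that $\W(v_\alpha)=4G(\alpha)^2<4(c_0/2)^2=c_0^2$, and $v_\alpha$ lies above its chord $x\mapsto 2v_\alpha(\tfrac12)x$ and hence above $\psi$ (since $\psi(0)<0$). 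As $v_\alpha(\tfrac12)$ varies continuously from $0$ to $2/c_0=c_*$ as $\alpha$ runs over $(0,\infty)$, the hypothesis $\psi(\tfrac12)<c_*$ provides $\alpha$ with $v_\alpha(\tfrac12)=\psi(\tfrac12)$, so $v_\alpha\in\Msym$ and $\inf_{\Msym}\W\le\W(v_\alpha)<c_0^2$.

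Now $\alpha_k$ cannot tend to $\infty$, for otherwise $\W(u_k)\ge 4G(\alpha_k)^2\to c_0^2>\inf_{\Msym}\W$, contradicting minimality; hence $(\alpha_k)$ is bounded. Concavity then gives $\|u_k\|_{L^\infty}=u_k(\tfrac12)\le\tfrac12\alpha_k$ and $\|u_k'\|_{L^\infty}=\alpha_k$ bounded, so $\|u_k''\|_{L^2}^2\le(1+\alpha_k^2)^{5/2}\W(u_k)$ is bounded and $(u_k)$ is bounded in $H^2(0,1)$. Extracting $u_k\wto u$ in $H^2$, hence $u_k\to u$ in $C^1([0,1])$, we get $u\in\Msym$, and weak lower semicontinuity of $\W$ (convexity in $v''$ and uniform convergence of the weight $(1+(v')^2)^{-5/2}$) yields $\W(u)\le\liminf\W(u_k)=\inf_{\Msym}\W$, so $u$ solves \eqref{Seq:M}.

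The main obstacle I anticipate is the compactness step for (i): producing the \emph{sharp} bound $\W(v)\ge 4G(v'(0^+))^2$ and recognizing that its threshold $c_0^2$ is precisely the energy of the cheapest admissible arch — this is exactly why the critical obstacle height is $c_*=2/c_0$, equivalently why $\int_0^{c_0/2}G^{-1}=2$. The second delicate point, shared by (i) and (ii), is making rigorous the reduction to concave competitors, i.e.\ that the (regularized) concave majorant stays in $H(0,1)$ with no increase of energy.
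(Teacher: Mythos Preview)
Your outline is correct and is precisely the approach of \cite[Section~4.1]{DD}, to which the paper simply defers without giving an independent proof; the concave-majorant replacement, the sharp bound $\W(v)\ge 4G(v'(0^+))^2$ via Cauchy--Schwarz, and the explicit sub-$c_0^2$ arch $v_\alpha$ are exactly the ingredients used there. One small simplification: the $H^2$ regularity of the concave envelope $\hat u$ needs only $u\in C^1$ (from $H^2\hookrightarrow C^1$), since tangent-matching at contact points forces $\hat u\in C^1$ and then $|\hat u''|\le|u''|$ a.e.\ --- no additional ``structural information on the minimizer'' is required, and the same replacement therefore applies verbatim to every term of the minimizing sequence in~(i).
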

%%%%%%%%%%%%%%%%%%%%%%%%%%%%%%%%%%%%%%
These are shown in \cite[Section 4.1]{DD}. 

%%%%%%%%%%%%%%%%%%%%%%%%%%%%%%%%%%%%%%  
\begin{proposition}[Known results on \eqref{Seq:P}, \cite{DD}] \label{Sprop:3.1} 
Assume that $\psi$ satisfies 
\begin{align}\label{Seq:psi}
\psi\in C([0,1]), \quad \psi(0)<0, \ \  \psi(1)<0 \quad \text{and} \quad \max_{0\leq x \leq 1}\psi(x)>0
\end{align} 
and let $u$ be a solution of \eqref{Seq:P}. 
Then the following hold$:$

\smallskip

{\rm (i)} Suppose that $u(x)>\psi(x)$ for all $x\in E:=(x_1,x_2)\subset(0,1)$. 
\begin{itemize}
\item[(a)] $u\in C^{\infty}(\bar{E})$ and $u$ satisfies \eqref{Seq:1.1} on $E$. 
\item[(b)] $v(x):=\vk_{u}(x)(1+u'(x)^2)^{\frac{1}{4}}=\frac{u''(x)}{(1+(u'(x))^2)^{\frac{5}{4}}}$ satisfies on $E$ 
\begin{align}\label{Seq:3.04}
-\frac{d}{dx}\left( \frac{v'(x)}{(1+(u'(x))^2)^{\frac{3}{4}}} \right) +\frac{\vk_{u}(x)u'(x)}{(1+(u'(x))^2)^{\frac{1}{4}}}v'(x) =0. 
\end{align}
\end{itemize}

\smallskip

{\rm (ii)} $\vk_{u}(0)=\vk_{u}(1)=0$, i.e., $u''(0)=u''(1)=0$. 

\smallskip

{\rm (iii)} Every solution $u$ of \eqref{Seq:P} satisfies
\begin{align}\label{SBV}
u\in C^2([0,1]) \quad \text{and} \quad u'''\in BV(0,1).
\end{align}
\end{proposition}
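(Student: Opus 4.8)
The plan is to read off all three assertions from the variational inequality \eqref{Seq:2.2}, i.e.\ $\W'(u)(v-u)\ge0$ for every $v\in M$, by inserting three families of admissible perturbations. Throughout I abbreviate
\[
A:=2\frac{u''}{(1+(u')^2)^{5/2}},\qquad
B:=5\frac{(u'')^2u'}{(1+(u')^2)^{7/2}},\qquad
\Phi:=-A'-B,
\]
so that by the first line of \eqref{Seq:2.02} one has $\W'(u)(\vp)=\int_0^1(A\vp''-B\vp')\dx$, and a direct differentiation shows $\Phi=-2u''(1+(u')^2)^{-5/2}{}'+5(u'')^2u'(1+(u')^2)^{-7/2}$, the integrand appearing in the second line of \eqref{Seq:2.02}. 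Recall also that $u\in H(0,1)\subset C^1([0,1])$, so $u'$ is continuous and $(1+(u')^2)^{5/2}$ is bounded and bounded away from zero.

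For (i)(a), fix the inactive interval $E=(x_1,x_2)$. On each compact subset of $E$ the continuous function $u-\psi$ has a positive lower bound, so for $\vp\in C^\infty_{\rm c}(E)$ and $|\ve|$ small both competitors $u\pm\ve\vp$ lie in $M$; minimality then forces $\W'(u)(\vp)=0$. This reads as $A''+B'=0$ in the sense of distributions on $E$, hence $A'+B$ is constant and $A'\in L^1(E)$, so $A\in W^{1,1}(E)\subset C^0$ and $u''=\tfrac12A(1+(u')^2)^{5/2}\in C^0$, i.e.\ $u\in C^2(E)$. Each additional derivative gained on $u$ upgrades $B$, hence $A$, hence $u''$, and this bootstrap yields $u\in C^\infty(\bar E)$; the vanishing of $\W'(u)(\vp)$ together with the third line of \eqref{Seq:2.02} then gives \eqref{Seq:1.1} pointwise on $E$. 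Part (i)(b) is a direct verification: differentiating $v=u''(1+(u')^2)^{-5/4}$ and substituting $\vk_u=u''(1+(u')^2)^{-3/2}$ rewrites \eqref{Seq:1.1} as \eqref{Seq:3.04}, which I would confirm by the chain rule.

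For (ii), since $\psi(0)<0=u(0)$ and $\psi(1)<0=u(1)$, continuity makes the obstacle inactive on $[0,\delta)$ and $(1-\delta,1]$; by the bootstrap of (i)(a), applied up to the endpoints where the Dirichlet data $u(0)=u(1)=0$ are prescribed, $u$ is of class $C^2$ there and $A''+B'=0$ holds. Choosing $\vp$ supported in $[0,\delta)$ with $\vp(0)=0$ but $\vp'(0)\ne0$ keeps $u\pm\ve\vp$ admissible, so $\W'(u)(\vp)=0$; integrating $\int A\vp''\dx$ by parts and using $A''+B'=0$ on the support leaves only the boundary contribution $-A(0)\vp'(0)$. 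Hence $A(0)=0$, that is $u''(0)=0$, and symmetrically $u''(1)=0$.

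The heart of the matter, and the step I expect to be the main obstacle, is (iii): propagating regularity across the free boundary $\partial\{u=\psi\}$, where a priori $u''$ is only $L^2$. Here I use one-sided variations: for $\vp\in C^\infty_{\rm c}(0,1)$ with $\vp\ge0$ the competitor $u+\vp\ge u\ge\psi$ lies in $M$, so $\W'(u)(\vp)=\int_0^1(A\vp''-B\vp')\dx\ge0$. This means $A''+B'=\mu$ for a nonnegative Radon measure $\mu$; since $A''+B'=0$ on the inactive open set, $\mu$ is supported in the compact set $\{u=\psi\}\subset(0,1)$, and it is finite because $\mu((0,1))=\W'(u)(\vp_0)<\infty$ for any fixed $\vp_0\in C^\infty_{\rm c}(0,1)$ with $\vp_0\ge0$ and $\vp_0\equiv1$ near $\{u=\psi\}$. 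Therefore $\Phi'=-\mu\le0$, so $\Phi$ is a bounded nonincreasing function and in particular $\Phi\in BV(0,1)$. A two-step bootstrap now finishes the proof: from $A'=-\Phi-B$ with $\Phi\in L^\infty$ and $B\in L^1$ we get $A\in W^{1,1}\subset C^0$, whence $u''=\tfrac12A(1+(u')^2)^{5/2}\in C^0$ and $u\in C^2([0,1])$; feeding $u''\in C^0$ back gives $B\in C^0\subset L^\infty$, so $A'=-\Phi-B\in L^\infty$, making $A$ and then $u''$ Lipschitz. Finally $u'''=\tfrac12(B-\Phi)(1+(u')^2)^{5/2}$ is a product of bounded $BV$ functions ($B$ Lipschitz, $\Phi\in BV$, $(1+(u')^2)^{5/2}\in C^1$), so $u'''\in BV(0,1)$, completing (iii).
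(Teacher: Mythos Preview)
Your argument is correct. The paper itself does not prove Proposition~\ref{Sprop:3.1}; it simply records the statement and cites \cite{DD} (specifically Proposition~3.2, Corollary~3.3, and Theorem~5.1 there) for parts (i), (ii), and (iii). So there is no in-paper proof to compare against, and what you have written is a clean self-contained reconstruction of the standard argument for fourth-order obstacle problems.

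A few remarks on your execution. In (i)(a) the extension of smoothness from $E$ to $\bar E$ deserves one more word: once the bootstrap gives $u\in C^2$ on compact subintervals, the relation $A'=C-B$ with $B\in L^1(E)$ shows that $A$ (hence $u''$) has a limit at the endpoints, and then the ODE structure propagates all higher derivatives to $\bar E$. In (iii) the key step---that $\Phi'=-\mu$ with $\mu$ a finite nonnegative measure forces $\Phi$ to be a bounded nonincreasing (hence $BV$) function---is exactly right, though it is worth noting that a priori $\Phi=-A'-B$ is only a distribution; the point is that any distribution on $(0,1)$ whose derivative is a finite measure is represented by a $BV$ function. Your two-pass bootstrap ($A'\in L^1\Rightarrow u\in C^2\Rightarrow B\in C^0\Rightarrow A'\in L^\infty\Rightarrow u''$ Lipschitz) and the explicit formula $u'''=\tfrac12(B-\Phi)(1+(u')^2)^{5/2}$ are correct and give $u'''\in BV$ immediately as a product of bounded $BV$ functions. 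This is essentially the argument one expects to find in \cite{DD}.
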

%%%%%%%%%%%%%%%%%%%%%%%%%%%%%%%%%%%%%%
The proofs of (i), (ii) and (iii) are given in \cite[Proposition 3.2]{DD}, \cite[Corollary 3.3]{DD} and \cite[Theorem 5.1]{DD}, respectively. 
If $\psi$ belongs to $\SC$, then $\psi$ clearly satisfies \eqref{Seq:psi}.

The representation of \eqref{Seq:3.04} is so important that the following comparison principle holds.
%%%%%%%%%%%%%%%%%%%%%%%%%%%%%%%%%%%%%%  
\begin{proposition} \label{Sprop:2.3} 
If $u$ satisfies \eqref{Seq:1.1} on $(x_1, x_2)$, then $v(x)=\frac{u''(x)}{(1+(u'(x))^2)^{\frac{5}{4}}}$ satisfies 
\begin{align} \label{Scom}
\min\{ v(x_1), v(x_2) \} \leq v(x) \leq \max\{v(x_1), v(x_2)\} \quad \text{for} \quad x_1<x< x_2.
\end{align}
\end{proposition}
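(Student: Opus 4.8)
The plan is to recognize \eqref{Seq:3.04} as a linear second-order ODE for $v$ and to invoke the classical maximum principle for such equations. First I would observe that, under the hypothesis that $u$ solves \eqref{Seq:1.1} on $(x_1,x_2)$, Proposition~\ref{Sprop:3.1}(i)(b) (or a direct computation reproducing that identity) tells us that $v(x)=u''(x)\big(1+u'(x)^2\big)^{-5/4}$ satisfies
\begin{align*}
-\frac{d}{dx}\!\left( \frac{v'(x)}{(1+(u'(x))^2)^{\frac{3}{4}}} \right) +\frac{\vk_{u}(x)u'(x)}{(1+(u'(x))^2)^{\frac{1}{4}}}\,v'(x) =0 \quad \text{on} \quad (x_1,x_2).
\end{align*}
Here the two functions $p(x):=\big(1+u'(x)^2\big)^{-3/4}$ and $q(x):=\vk_u(x)u'(x)\big(1+u'(x)^2\big)^{-1/4}$ are continuous on $[x_1,x_2]$ (using $u\in C^\infty(\bar E)$ from Proposition~\ref{Sprop:3.1}(i)(a)), and $p$ is strictly positive and bounded below. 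Expanding the derivative, $v$ solves a linear homogeneous equation of the form $-p\,v'' + (\text{lower order})\,v' = 0$ with no zeroth-order term.

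The key point is that this equation has \emph{no zeroth-order coefficient}, so the weak maximum principle applies in its strongest form: a solution cannot attain an interior maximum or minimum strictly greater (resp.\ smaller) than its boundary values, unless it is constant. Concretely, I would argue as follows. Suppose $v$ attained an interior maximum at some $x_0\in(x_1,x_2)$ with $v(x_0)>\max\{v(x_1),v(x_2)\}$. Multiplying the equation by $p^{-1}>0$ and rearranging gives $v'' = a(x)v'$ for a continuous function $a$; then on any subinterval where $v>\max\{v(x_1),v(x_2)\}$, standard ODE comparison (or Gronwall applied to $v'$) forces $v'$ to keep a sign, contradicting the existence of an interior strict maximum. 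The same argument applied to $-v$ (which solves the same linear equation) handles the minimum. Hence $\min\{v(x_1),v(x_2)\}\le v(x)\le\max\{v(x_1),v(x_2)\}$ for all $x\in(x_1,x_2)$, which is exactly \eqref{Scom}.

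I expect the only real care needed is in justifying the maximum-principle step cleanly: since the coefficients depend on $u'$ and $\vk_u$, one must make sure they are genuinely continuous (hence bounded) on the closed interval $[x_1,x_2]$ before invoking the classical theory — this is where Proposition~\ref{Sprop:3.1}(i)(a), giving $u\in C^\infty(\bar E)$, is essential. An alternative, perhaps cleaner, route that avoids any regularity bookkeeping is to integrate the divergence-form equation directly: from \eqref{Seq:3.04} one gets $\dfrac{v'(x)}{(1+u'(x)^2)^{3/4}} = C\exp\!\Big(\int_{x_1}^x \vk_u(1+u'^2)^{-1/4}\,ds\Big)$ for a constant $C$, so $v'$ never vanishes unless $v'\equiv0$; thus $v$ is strictly monotone (or constant) on $(x_1,x_2)$, and \eqref{Scom} is immediate. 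I would present this integrating-factor version as the main argument, since it is short and self-contained, and remark that it is just the maximum principle in disguise.
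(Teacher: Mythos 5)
Your argument is correct and matches the proof the paper relies on: the paper gives no proof of its own but cites Deckelnick--Grunau, whose argument is precisely the maximum principle for the zeroth-order-free equation \eqref{Seq:3.04}, and your integrating-factor version is a clean, self-contained rendering of that same idea (monotone-or-constant, hence \eqref{Scom}). The only slip is in your displayed exponent, which should be $\int_{x_1}^{x}\vk_u\,u'\,(1+(u')^2)^{1/2}\,ds$ (the ratio of the first-order coefficient to $p(x)=(1+(u')^2)^{-3/4}$), but this does not affect the conclusion since all that is used is that the integrand is continuous, so the exponential factor is positive and $v'$ cannot change sign unless it vanishes identically.
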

%%%%%%%%%%%%%%%%%%%%%%%%%%%%%%%%%%%%%%
For the proof we refer the reader to \cite{DG_07}.

%%%%%%%%%%%%%%%%%%%%%%%%%%%%%%%%%%%%%%%%%%%%%%%%%%%%%%%%%%%%%%%%%%%%%%%%%%%%%%%%%%%%%
%%%%%%%%%%%%%%%%%%%%%%%%%%%%%%%%%%%%%%%%%%%%%%%%%%%%%%%%%%%%%%%%%%%%%%%%%%%%%%%%%%%%%
%%%%%%%%%%%%%%%%%%%%%%%%%%%%%%%%%%%%%%%%%%%%%%%%%%%%%%%%%%%%%%%%%%%%%%%%%%%%%%%%%%%%%

\section{Concavity and coincidence set} \label{Ssection:3}

According to \cite[Proposition~3.2]{Muller01}, solutions of \eqref{Seq:M} touch $\psi$ only at $x=1/2$ under the assumption $\psi\in\SC$.
In this section  we deduce that solutions of \eqref{Seq:P} also touch $\psi$ only at $x=1/2$.
From this fact, we also show that solutions of \eqref{Seq:P} are concave. 
We remark here that the results in Section~\ref{Ssection:3} do not depend on the height of $\psi$.

Thanks to \eqref{SBV}, 
every solution $u$ of \eqref{Seq:P} satisfies $u'''\in BV(0,1)$. 
For $\vp\in C^{\infty}_{\rm c}(0,\frac{1}{2})$ with $\vp\geq0$, by using $v=u+\vp$ as a test function in \eqref{Seq:2.2}, we find that $u$  satisfies 
\begin{align}\label{Seq:3.01}
\int_0^{\frac{1}{2}} \bigg(-2\frac{u'''(x)}{(1+u'(x)^2)^{\frac{5}{2}}}+5 \frac{u''(x)^2u'(x)}{(1+u'(x)^2)^{\frac{7}{2}}}\bigg) \vp'(x)\dx \geq 0 
\end{align}
for any $0\leq \vp\in C^{\infty}_{\rm c}(0,\frac{1}{2})$.

%%%%%%%%%%%%%%%%%%%%%%%%%%%%%%%%%%%%%%
\begin{lemma} \label{Sprop:3.2} 
Let $\psi\in \SC$ and $u$ be a solution of \eqref{Seq:P}.
Then $u$ satisfies $u(\tfrac{1}{2})=\psi(\tfrac{1}{2})$. 
Moreover, $u(x)\ne\psi(x)$ for $x\ne\tfrac{1}{2}$.
\end{lemma}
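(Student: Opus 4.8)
The plan is to exploit the variational inequality \eqref{Seq:3.01} together with the symmetric cone structure of $\psi$ to pin down the coincidence set $\{u=\psi\}$. First I would observe that, since $\psi\in\SC$ is piecewise affine on $[0,\frac12]$ with $\psi''\equiv 0$ there in the distributional sense, on any open interval where $u$ coincides with $\psi$ the function $u$ must itself be affine, hence $u''=0$ on the interior of the coincidence set. This is the point where the cone hypothesis is genuinely used: an affine obstacle forces the coincidence set to carry no curvature. Combined with $u''(0)=0$ from Proposition~\ref{Sprop:3.1}(ii) and the Euler--Lagrange equation \eqref{Seq:1.1} holding on the complement, this should already be restrictive.

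Next I would argue that $u(\frac12)=\psi(\frac12)$, i.e.\ the obstacle is actually touched. Suppose not; then $u(x)>\psi(x)$ for all $x\in[0,\frac12]$ (using $u(0)=0>\psi(0)$ and continuity, and symmetry), so by Proposition~\ref{Sprop:3.1}(i)(a) $u$ solves \eqref{Seq:1.1} on all of $(0,1)$ with the boundary data $u(0)=u(1)=0$, $u''(0)=u''(1)=0$. One then checks—e.g.\ via the comparison principle in Proposition~\ref{Sprop:2.3}, which with $v(0)=v(1)=0$ forces $v\equiv 0$, hence $u''\equiv 0$, hence $u\equiv 0$—that the only such solution is $u\equiv 0$. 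But $u\equiv 0$ fails to lie above $\psi$ near $x=\frac12$ since $\psi(\frac12)>0$, a contradiction. Therefore $u(\frac12)=\psi(\frac12)$.

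It then remains to rule out contact away from $x=\frac12$. By symmetry it suffices to show $u(x)>\psi(x)$ on $(0,\frac12)$. Let $x_0:=\inf\{x\in(0,\tfrac12]: u(x)=\psi(x)\}$; I want to show $x_0=\frac12$. On $(0,x_0)$ we have $u>\psi$, so by Proposition~\ref{Sprop:3.1}(i) $u$ is smooth there and solves \eqref{Seq:1.1}, and the transformed quantity $v=u''/(1+(u')^2)^{5/4}$ satisfies \eqref{Seq:3.04}; with $v(0)=0$ (from $u''(0)=0$) the comparison principle Proposition~\ref{Sprop:2.3} on $(0,x_0)$, once we identify the sign of $v(x_0)$, controls the sign of $u''$ on $(0,x_0)$. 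If $x_0<\frac12$, then at $x_0$ we have $u(x_0)=\psi(x_0)$ with $\psi$ affine, so by the contact geometry $u'(x_0)=\psi'(x_0)=2(\psi(\frac12)-\psi(0))>0$ and $u''(x_0)\le 0$; pushing this together with $u(0)=0$, $u'$ monotonicity coming from the sign of $u''$, and $u(x_0)=\psi(x_0)=(1-2x_0)\psi(0)+2x_0\psi(\frac12)$ should produce a contradiction with $\psi(0)<0$—concretely, the solution emanating from $(0,0)$ with $u''\le 0$ cannot rise steeply enough to meet the cone from above before $x=\frac12$ unless it stays strictly above it. The main obstacle I expect is exactly this last quantitative step: turning "the Euler--Lagrange trajectory starting flat at the origin cannot be tangent to the affine obstacle from above at an interior point" into a clean argument, which is likely where one invokes the explicit structure of solutions via $G$ and $G^{-1}$ from \eqref{Sdef-G}--\eqref{Seq:1.05} rather than soft comparison alone; the boundary behaviour $u''(0)=0$ and the sign information $\psi(0)<0<\psi(\frac12)$ will have to be combined carefully to close it.
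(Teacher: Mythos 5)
There are genuine gaps. First, your reduction in the second paragraph is a non sequitur: from $u(\tfrac12)>\psi(\tfrac12)$ you conclude $u>\psi$ on all of $[0,\tfrac12]$, but the graph can clear the apex of the cone while still touching its sloped part at some $x_1\in(0,\tfrac12)$. This is exactly the case the paper must work hardest on: there one sets $x_1=\sup\{x<\tfrac12: u(x)=\psi(x)\}$, uses symmetry and the comparison principle (Proposition~\ref{Sprop:2.3}) to force $v=u''(1+(u')^2)^{-5/4}$ to be \emph{constant} on $(x_1,1-x_1)$, whence $u'=G^{-1}(\tfrac{c}{2}-cx)$ there; this profile is concave and tangent to the cone at $x_1$, so it cannot stay above the apex --- contradiction. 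Without this case your appeal to the global Navier problem $u(0)=u(1)=u''(0)=u''(1)=0$ is unjustified (though the sub-argument $v\equiv0\Rightarrow u\equiv0$ is fine once that reduction is legitimate). Second, you have a sign error at the contact point: since $u-\psi\ge0$ attains its minimum $0$ at $x_0$ and $\psi''\equiv0$, one gets $u''(x_0)\ge0$ (the paper even shows $u''(a)>0$), not $u''(x_0)\le0$; your subsequent monotonicity heuristics build on the wrong sign.

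Third, and most importantly, the decisive step --- excluding an interior contact point --- is left open by your own admission, and the route you sketch (a quantitative ODE/tangency argument on the free set) is not how it closes. The contact point is precisely where \eqref{Seq:1.1} fails and a nonnegative measure sits in the Euler--Lagrange relation, so no analysis of the ODE away from $a$ alone can rule it out. The paper instead tests the variational inequality \eqref{Seq:3.01} with a bump $0\le\vp\in C^\infty_{\rm c}(0,\tfrac12)$, $\vp(a)>0$: the conserved quantity $2u'''(1+(u')^2)^{-5/2}-5(u'')^2u'(1+(u')^2)^{-7/2}$ equals a constant $\eta_1>0$ on $(0,a)$ (from $u''(0)=0$ and $u'''(0)>0$) and a constant $\eta_2\le0$ on $(a,\tfrac12)$ (from $u'''(\tfrac12)\le0$, which follows from $v$ attaining its minimum at $\tfrac12$), so the left-hand side of \eqref{Seq:3.01} equals $(-\eta_1+\eta_2)\vp(a)<0$, a contradiction. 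You flag \eqref{Seq:3.01} at the outset but never actually use it; this jump-sign computation is the missing idea. Finally, the possibility of a contact \emph{interval} adjacent to a free interval (where your affine observation applies) still needs the paper's separate Step~2 dichotomy using \eqref{Seq:3.4} to be excluded.
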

%%%%%%%%%%%%%%%%%%%%%%%%%%%%%%%%%%%%%%
\begin{proof}
Fix $u$ as a solution of \eqref{Seq:P}. 
We divide the proof  into three steps.

\smallskip

\noindent
\textbf{Step 1}. {\sl $u$ touches $\psi$ at $x=\tfrac{1}{2}$.\ } 
Assume that $u(\tfrac{1}{2})>\psi(\tfrac{1}{2})$. 
First let us suppose that 
$$I:=\Set{x\in(0,\tfrac{1}{2}) | u(x)=\psi(x)} \ne \emptyset.$$ 
Then we can define $x_{1}:=\sup I \in(0,\tfrac{1}{2})$ such that $u(x_1)=\psi(x_1)$ and $u'(x_1)=\psi'(x_1)$. 
By symmetry we obtain 
\[u(x)>\psi(x) \quad \text{in} \quad x\in(x_{1},1-x_{1}). \]
Therefore by Propositions~\ref{Sprop:3.1} and \ref{Sprop:2.3}, $v(x)=u''(x)(1+u'(x)^2)^{-\frac{5}{4}}$ satisfies 
\[
\min\Set{v(x_1), v(1-x_1)} \leq v(x) \leq \max\Set{v(x_1), v(1-x_1)} \quad \text{for}\quad x\in(x_1, 1-x_1), 
\]
which implies that $v\equiv$ const.\ in $[x_1, 1-x_1]$ since $v(x_1)= v(1-x_1)$ holds. 
Then by the same argument as in \cite[Lemma 4]{DG_07}, there exists $c\in (-\tfrac{c_0}{2}, \tfrac{c_0}{2})$ such that $u$ satisfies
\begin{align}\label{Seq:3.2}
u'(x)=G^{-1}\Big(\frac{c}{2}- cx\Big) \quad \text{in} \quad[x_1, 1-x_1]. 
\end{align}
%\underline{If needed, I will insert the proof of \eqref{Seq:3.2} into \S2.}
Here we note that the curve $u$ given by \eqref{Seq:3.2} is concave for all $c$. 
However, taking account of the shape of $\psi\in\SC$, $u'(x_1)=\psi'(x_1)$, and the concavity of \eqref{Seq:3.2}, we find the contradiction to $u>\psi$ in $(x_{1},1-x_{1})$ 
(see e.g.\ Figure~\ref{Sfig:12}).

Next, if $I=\emptyset$, Proposition~\ref{Sprop:3.1}(i) implies that $u$ satisfies $u\in C^{\infty}([0,1])$ and that $u$ satisfies \eqref{Seq:1.1} on $(0,1)$.
Moreover, it follows from Proposition~\ref{Sprop:3.1}(ii) that $u$ also satisfies $u''(0)=0$.
However, according to  \cite[Theorem 1]{DG_07}, such $u$ is limited to $u\equiv0$, 
which contradicts to $u\geq\psi$. 
Hence no matter whether $I$ is empty or not, $u(\tfrac{1}{2})=\psi(\tfrac{1}{2})$ holds.

\begin{figure}[htbp]
\begin{center}
\includegraphics[width=5.5cm]{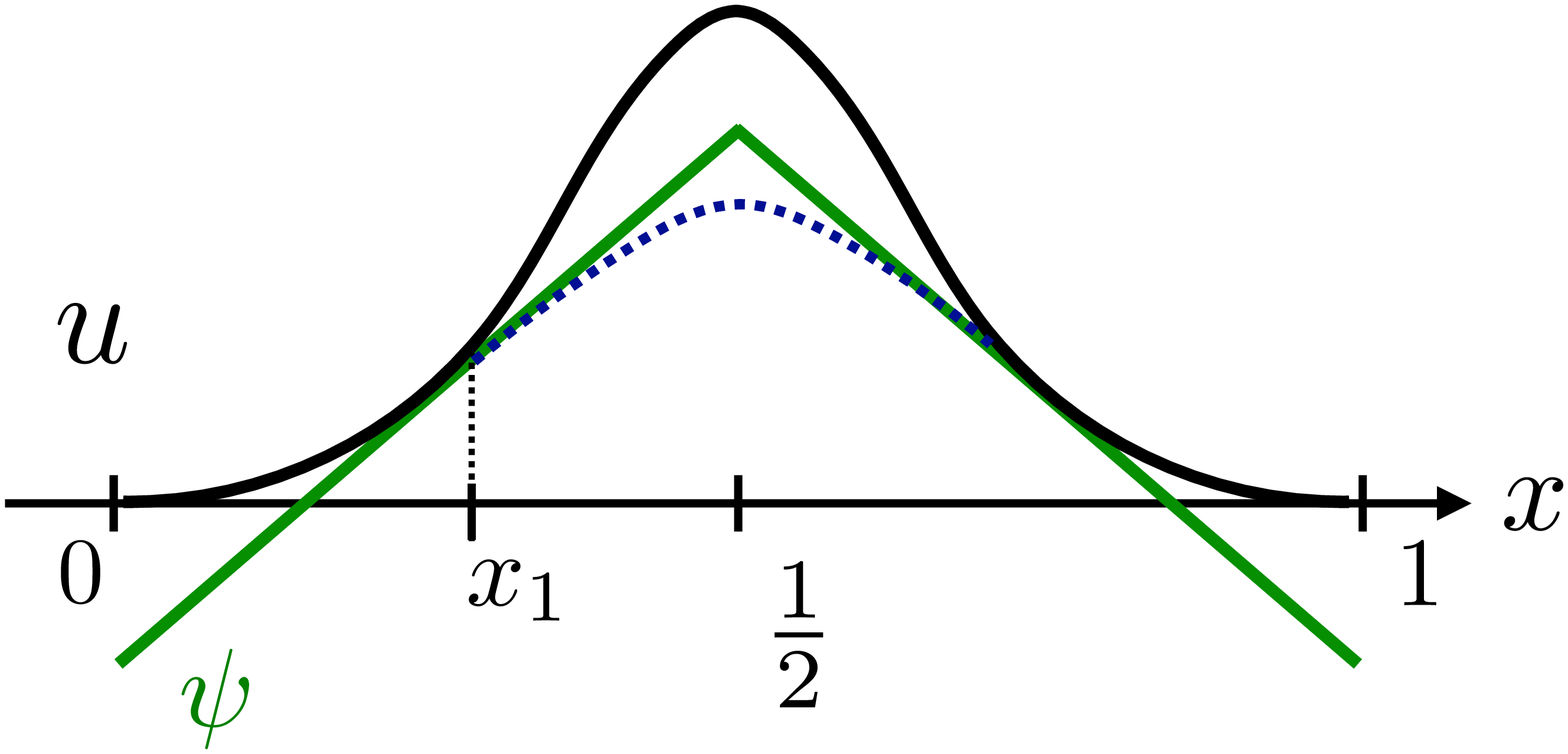}
\quad
\includegraphics[width=5.5cm]{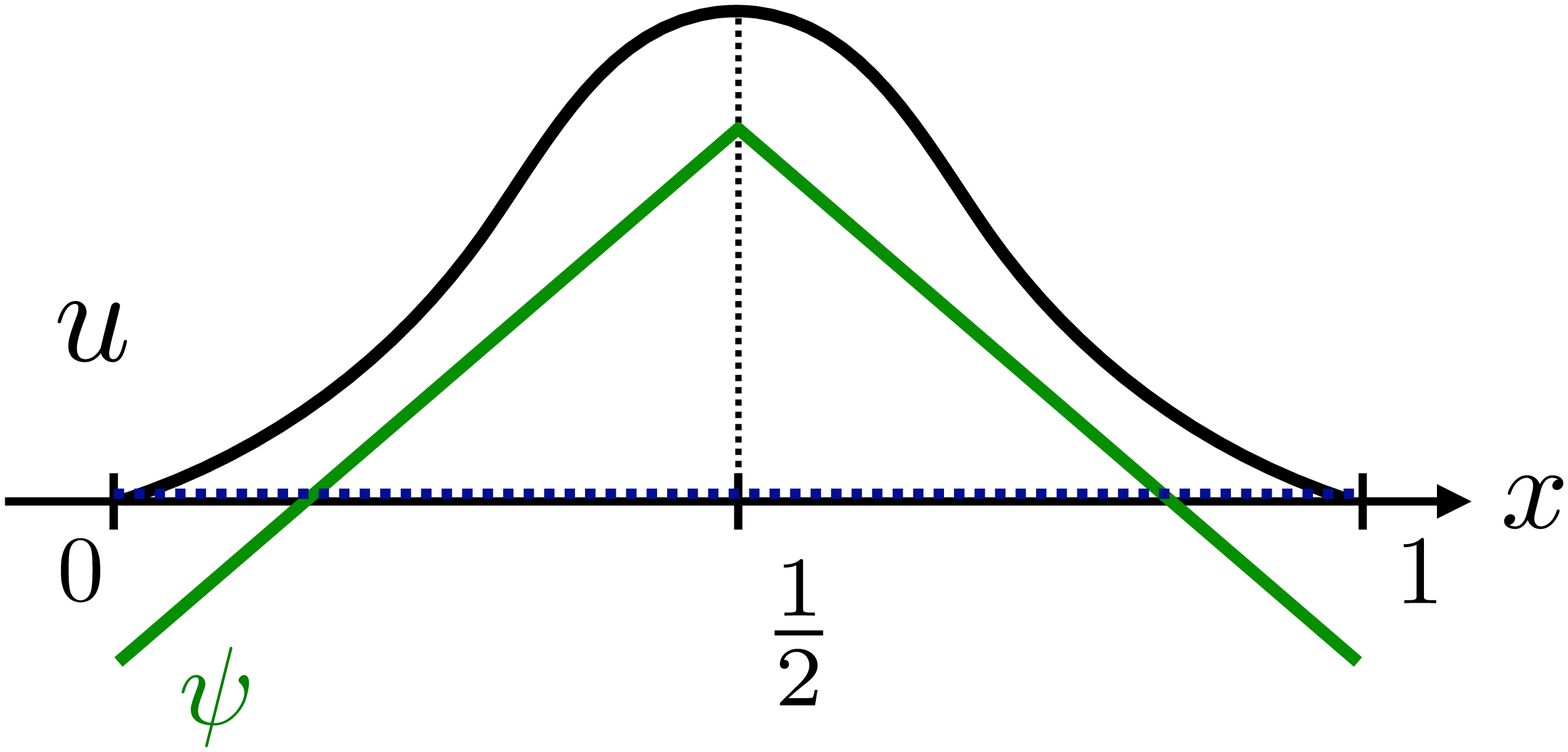}
\caption{When $I$ is non-empty, $u|_{[x_1,1-x_1]}$ must be the dotted curve (left), while $u$ must be trivial when $I$ is empty (right).
\label{Sfig:12}}
\end{center}
\end{figure}

%\smallskip

\noindent
\textbf{Step 2}. {\sl The coincidence set has zero Lebesgue measure.\ } 
Let $$N:=\{x\in(0,\tfrac{1}{2})\, |\, u(x)>\psi(x)\}.$$
If  there exist $x_2, x_3 \in (0,\tfrac{1}{2})$ such that $(x_2, x_3)$ is a connected component of $N$, then it follows from Proposition~\ref{Sprop:3.1}(i) that $u$ satisfies \eqref{Seq:1.1} on $(x_2, x_3)$. 
By Proposition~\ref{Sprop:2.3}, $v=u''(1+(u')^2)^{-\frac{5}{4}}$ satisfies 
\begin{align}\label{Seq:3.05}
v(x)\geq \min\{ v(x_2), v(x_3)\} \quad \text{for}\quad x\in (x_2, x_3).
\end{align}
Moreover, since $u-\psi$ attains minimum at $x=x_2, x_3$, it holds that $u''(x_2)$, $u''(x_3)\geq 0$, which in combination with \eqref{Seq:3.05} gives $u''\geq0$ in $[x_2, x_3]$. 
However, $u''\geq0$ in $[x_2, x_3]$ and $u'(x_2)=u'(x_3)$ imply that $u\equiv\psi$ in $[x_2, x_3]$, which contradicts the fact that $(x_2, x_3)$ is a connected component of $N$. 

Next let us suppose that there are $0<x_4 < x_5 <{1}/{2}$ such that $(0,x_4)$ and $(x_5,\tfrac{1}{2})$ are connected components of $N$, respectively.
The previous argument implies that $u\equiv\psi$ in $[x_4, x_5]$ and hence that $u''=0$ in $[x_4, x_5]$ 
since $\psi''\equiv0$. 
%By \eqref{SBV} we have 
%$$ u''(x_5) =0. $$
Moreover, recalling  that $v=u''(1+(u')^2)^{-\frac{5}{4}}$ satisfies the comparison principle \eqref{Scom} on $(x_5, \tfrac{1}{2})$, we find that one of the following holds: 
\begin{align}\label{Seq:3.4}
\begin{cases}
\text{(i)}\quad  &u''(\tfrac{1}{2})\geq 0 \quad \text{and} \quad u''(x)\geq 0 \quad\text{in}\quad [x_5, \tfrac{1}{2}];\\
\text{(ii)} &u''(\tfrac{1}{2})< 0 \quad \text{and} \quad u''(x)\leq 0 \quad\text{in}\quad [x_5, \tfrac{1}{2}].
\end{cases}
\end{align}
% However, we find that both of them contradict
%(see e.g. \eqref{Sfig:13}).
%\textcolor{red}{
In the case of (i), it holds that $u''\geq 0$ in $[x_5, \frac{1}{2}]$ and $u'(x_5)>0$, which contradicts $u'(\frac{1}{2})=0$. 
Supposing (ii), we infer from $u'(x_5)=\psi'(x_5)$ that $u(\frac{1}{2})<\psi(\frac{1}{2})$.
Therefore both (i) and (ii) do not occur and 
we conclude that either %}
$ N=(0,\frac{1}{2}) $
or
\begin{align}\label{Seq:3.5}
 N=(0,a)\cup\Big(a,\frac{1}{2}\Big) \quad \text{for some} \quad 0<a<\frac{1}{2}
\end{align}
holds.

%\begin{figure}[h]
%\begin{center}
%\includegraphics[width=5.5cm]{cone_Step2-1.eps}\label{Sfig:13}
%\quad
%\includegraphics[width=5.5cm]{cone_Step2-2.eps}
%\caption{If $u>\psi$ in $(x_2, x_3)$, the comparison principle makes $u$ a line segment so that $u\equiv\psi$ in $(x_2, x_3)$ (left).
%When $u\equiv\psi$ in $(x_4, x_5)$, $u$ is either convex or a segment in $[x_5, \tfrac{1}{2}]$ (right). 
%}
%\end{center}
%\end{figure}

%\smallskip

\noindent
\textbf{Step 3}. {\sl We show that $u(x)>\psi(x)$ if $x\ne\tfrac{1}{2}$.\ } 
It is sufficient to show  that \eqref{Seq:3.5} does not occur.
Suppose, to the contrary, that there exists $a\in (0,\tfrac{1}{2})$ satisfying $u(a)=\psi(a)$. 
Then $u''(a)\geq 0$ since $u-\psi$ attains the minimum at $x=a$. 
Moreover, we have
\begin{align}\label{Seq:3.06}
u''(a)> 0. 
\end{align}
In fact, if $u''(a)=0$, then we obtain the same contradiction as in \eqref{Seq:3.4}. 
Let us recall that
\begin{align*}
u(0)=0, \quad u'(\tfrac{1}{2})=0 \quad \text{and}\quad u''(0)=0.
\end{align*}
Set $A_1:=(0,a)$ and $A_2:=(a,\tfrac{1}{2})$.
Then $u>\psi$ in $A_i$, $i=1,2$. 
Therefore Proposition~\ref{Sprop:3.1}(a) implies that $u\in C^{\infty}(\bar{A_i})$ and we have
\begin{align}\label{Seq:3.08}
 \bigg(-2\frac{u'''(x)}{(1+u'(x)^2)^{\frac{5}{2}}}+5 \frac{u''(x)^2u'(x)}{(1+u'(x)^2)^{\frac{7}{2}}}\bigg)'= 0 \quad \text{in}\quad A_{i}
\end{align}
for $i=1,2$, respectively.
%Since $u'''\in BV(0,1)$, we must distinguish ``\,$u'''(a)$'', i.e.,
%\[ u'''_{-}(a):=\lim_{x \nearrow a} u'''(x) \quad \text{and}\quad u'''_{+}(a):=\lim_{x \searrow a} u'''(x).
%\]

First we focus on $A_1$. 
Since $u''(0)=0$ and $u''(a)>0$ hold, combining these with \eqref{Scom} we have $u''(x)\geq 0$ for $x\in A_1$. 
Hence $u'''(0)\geq0$. 
If $u'''(0)=0$, 
then $u$ satisfies \eqref{Seq:1.1} with $u(0)=u''(0)=u'''(0)=0$ and such $u$ is limited to a line segment.
%(by the theory of ODE, the uniqueness of initial data problem).
This contradicts $u'(\tfrac{1}{2})=0$. 
Therefore $u'''(0)>0$, which in combination with \eqref{Seq:3.08} and $u''(0)=0$ gives
\begin{align}\label{Seq:3.09}
2\frac{u'''(x)}{(1+u'(x)^2)^{\frac{5}{2}}}-5 \frac{u''(x)^2u'(x)}{(1+u'(x)^2)^{\frac{7}{2}}}
=2\frac{u'''(0)}{(1+u'(0)^2)^{\frac{5}{2}}}=:\eta_1>0
\end{align}
for all $x\in (0,a)=A_1$.

Next we focus on $A_2$.
If $u''(\tfrac{1}{2})> 0$, then \eqref{Scom} and \eqref{Seq:3.06} imply that $u''(x)\geq 0$ for $x\in\bar{A_2}$. 
However this leads to a contradiction by the same method as in \eqref{Seq:3.4} and hence we have 
$u''(\frac{1}{2})\leq 0. $ 
Combining the comparison principle for $v(x)=u''(x)(1+u'(x)^2)^{-\frac{5}{4}}$ with $v(a)>0$ and $v(\tfrac{1}{2})\leq0$, we find that $v$ attains the minimum at $x=\tfrac{1}{2}$ in $a\leq x \leq \tfrac{1}{2}$. 
Then it holds that 
\[ 
v'(x)\Big|_{x=\frac{1}{2}} = \frac{u'''(x)}{(1+u'(x)^2)^{\frac{5}{4}}}-\frac{5}{2} \frac{u''(x)^2u'(x)}{(1+u'(x)^2)^{\frac{9}{4}}}\Bigg|_{x=\frac{1}{2}} \leq0.
\]
This together with $u'(\frac{1}{2})=0$ implies that $u'''(\tfrac{1}{2})\leq0$. 
Following the same way as in \eqref{Seq:3.09}, we infer from \eqref{Seq:3.08} that
\begin{align}\label{Seq:3.10}
2\frac{u'''(x)}{(1+u'(x)^2)^{\frac{5}{2}}}-5 \frac{u''(x)^2u'(x)}{(1+u'(x)^2)^{\frac{7}{2}}}
=2\frac{u'''(\tfrac{1}{2})}{(1+u'(\tfrac{1}{2})^2)^{\frac{5}{2}}}=:\eta_2 \leq 0 
\end{align}
for all $x\in (a,\frac{1}{2})=A_2$.

Finally, fix $0\leq \vp\in C^{\infty}_{\rm c}(0,\frac{1}{2})$ such that $\vp(a)>0$. 
Then the left-hand side of \eqref{Seq:3.01} is reduced into
\begin{align*}
\int_0^{\frac{1}{2}} \bigg(-2\frac{u'''(x)}{(1+u'(x)^2)^{\frac{5}{2}}}&+5 \frac{u''(x)^2u'(x)}{(1+u'(x)^2)^{\frac{7}{2}}}\bigg) \vp'(x)\dx \\
&=\int_0^a -\eta_1\vp'(x) \dx + \int_a^{\frac{1}{2}}-\eta_2\vp'(x) \dx \\
&=(-\eta_1+\eta_2)\vp(a) <0, 
\end{align*}
where we used \eqref{Seq:3.09} and \eqref{Seq:3.10}. 
However, this clearly contradicts \eqref{Seq:3.01}. 
The proof is complete.
\end{proof}

By Proposition~\ref{Sprop:3.1}, Lemma~\ref{Sprop:3.2}, and the symmetry property of $M_{\rm sym}$, solutions of \eqref{Seq:P} satisfy \eqref{SBVP}.
From this fact, we deduce the concavity of solutions of \eqref{Seq:P}. 

%%%%%%%%%%%%%%%%%%%%%%%%%%%%%%%%%%%%%%
\begin{proposition} \label{Sprop:3.3} 
Let $\psi\in \SC$.
Then every solution $u$ of \eqref{Seq:P} is concave if it exists. 
Moreover, it holds that
$$u''\Big(\frac{1}{2}\Big)<0.$$
\end{proposition}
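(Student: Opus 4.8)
The plan is to work on the half-interval $[0,\tfrac12]$ where, by Proposition~\ref{Sprop:3.1}, Lemma~\ref{Sprop:3.2}, and the symmetry of $\Msym$, any solution $u$ of \eqref{Seq:P} satisfies \eqref{SBVP}: in particular $u$ solves \eqref{Seq:1.1} on $(0,\tfrac12)$, with $u(0)=0$, $u''(0)=0$, $u(\tfrac12)=\psi(\tfrac12)$ and $u'(\tfrac12)=0$. I would first record that $u\in C^\infty((0,\tfrac12))\cap C^2([0,1])$, so that the quantity $v(x)=u''(x)(1+u'(x)^2)^{-5/4}$ is continuous on $[0,\tfrac12]$ and, by the comparison principle of Proposition~\ref{Sprop:2.3} applied on $(0,\tfrac12)$, satisfies $\min\{v(0),v(\tfrac12)\}\le v(x)\le\max\{v(0),v(\tfrac12)\}$ there. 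Since $v(0)=0$ (because $u''(0)=0$), concavity of $u$ on $[0,\tfrac12]$ — equivalently $u''\le 0$, equivalently $v\le 0$ — will follow as soon as I show $v(\tfrac12)\le 0$, i.e. $u''(\tfrac12)\le 0$; and the strict statement $u''(\tfrac12)<0$ is exactly the extra claim to be proved. Once $u''\le 0$ on $[0,\tfrac12]$, the symmetry $u(x)=u(1-x)$ gives concavity on all of $[0,1]$.

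So the heart of the matter is to prove $u''(\tfrac12)<0$. First I would rule out $u''(\tfrac12)>0$: if $u''(\tfrac12)>0$ then, since $v(0)=0<v(\tfrac12)$, the comparison principle forces $v\ge 0$, hence $u''\ge 0$, on $[0,\tfrac12]$; combined with $u'(\tfrac12)=0$ this makes $u'$ nonpositive and nondecreasing on $[0,\tfrac12]$, so $u'\le 0$ there, while $u(0)=0$ and $u(\tfrac12)=\psi(\tfrac12)>0$ force $u'$ to be positive somewhere — a contradiction. (This is precisely the mechanism used repeatedly in the proof of Lemma~\ref{Sprop:3.2}.) Thus $u''(\tfrac12)\le 0$, which already yields concavity; it remains to exclude the borderline case $u''(\tfrac12)=0$.

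Suppose $u''(\tfrac12)=0$, i.e. $v(\tfrac12)=0$. Then $v(0)=v(\tfrac12)=0$, and the comparison principle on $(0,\tfrac12)$ squeezes $v\equiv 0$, hence $u''\equiv 0$, on $[0,\tfrac12]$; so $u$ is affine on $[0,\tfrac12]$, and together with $u(0)=0$, $u'(\tfrac12)=0$ this forces $u\equiv 0$ on $[0,\tfrac12]$, contradicting $u(\tfrac12)=\psi(\tfrac12)>0$. (Alternatively, one invokes directly the rigidity statement from \cite{DG_07}: a solution of \eqref{Seq:1.1} with $u(0)=u''(0)=0$ and $u''(\tfrac12)=0$ — equivalently, two interior zeros of $v$, hence $v\equiv 0$ — must be the trivial line segment, which is incompatible with $u'(\tfrac12)=0$ and $u(\tfrac12)>0$.) Hence $u''(\tfrac12)<0$, and concavity on $[0,1]$ follows from the displayed comparison bound plus symmetry.

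The main obstacle is conceptual rather than computational: everything hinges on having \eqref{SBVP} available — i.e. on Lemma~\ref{Sprop:3.2} identifying the coincidence set as $\{\tfrac12\}$ and the symmetry reduction — so that the second-order comparison principle of Proposition~\ref{Sprop:2.3} can be applied cleanly on the full half-interval $(0,\tfrac12)$ rather than only on the open complement of an unknown coincidence set. Given that, the remaining work is just the sign bookkeeping above (rule out $u''(\tfrac12)>0$ by the monotonicity/boundary-value clash, rule out $u''(\tfrac12)=0$ by the triviality argument), all of which reuses techniques already deployed in Section~\ref{Ssection:3}.
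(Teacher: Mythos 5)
Your proposal is correct and follows essentially the same route as the paper: reduce to \eqref{SBVP} on $[0,\tfrac12]$ via Lemma~\ref{Sprop:3.2}, apply the comparison principle of Proposition~\ref{Sprop:2.3} with $v(0)=0$, and derive a contradiction with the boundary data $u(0)=0$, $u(\tfrac12)=\psi(\tfrac12)>0$, $u'(\tfrac12)=0$ when $u''(\tfrac12)\ge 0$. The only cosmetic difference is that you treat $u''(\tfrac12)>0$ and $u''(\tfrac12)=0$ as separate subcases, whereas the paper's single argument (from $u''(\tfrac12)\ge0$, $v\ge\min\{v(0),v(\tfrac12)\}=0$ forces $u''\ge0$, hence $u'\le0$ on $[0,\tfrac12]$, contradicting $0=u(0)<u(\tfrac12)$) covers both at once.
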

%%%%%%%%%%%%%%%%%%%%%%%%%%%%%%%%%%%%%%
\begin{proof}
Recall that all solutions of \eqref{Seq:P} satisfy \eqref{SBVP}.
Let $u$ be a solution of \eqref{SBVP}. 
It suffices to show that $u''(\tfrac{1}{2})<0$ since we infer from Proposition~\ref{Sprop:2.3}(i) that if $u''(\tfrac{1}{2})<0$, then
\begin{align*}
\frac{u''(x)}{(1+(u'(x))^2)^{\frac{5}{4}}} =v(x) \leq \max\Set{v(0), v(\tfrac{1}{2})}=0 
\quad \text{for}\quad x\in\Big(0,\frac{1}{2}\Big).
\end{align*} 
This clearly implies that $u''(x)\leq0$ in $[0,\tfrac{1}{2}]$, and $u''(x)\leq0$ in $[\tfrac{1}{2},1]$ also holds by symmetry.

Suppose, to the contrary, that $u''(\tfrac{1}{2})\geq0$. 
Then comparison principle \eqref{Scom} implies that
\begin{align*}
\frac{u''(x)}{(1+(u'(x))^2)^{\frac{5}{4}}} =v(x) \geq \min\Set{v(0), v(\tfrac{1}{2})}=0 
\quad \text{for} \quad x\in\Big(0,\frac{1}{2}\Big).
\end{align*}
Hence $u''(x)\geq0$ in $[0,\tfrac{1}{2}]$. 
However, such $u$ does not satisfy $u'(\tfrac{1}{2})=0$ since $u(\frac{1}{2})=\psi(\frac{1}{2})$.
This contradicts our assumption and hence we obtain $u''(\frac{1}{2})<0$. 
The proof is complete.
\end{proof}

%\textcolor{red}{
%%%%%%%%%%%%%%%%%%%%%%%%%%%%%%%%%%%%%%
\begin{remark} \label{Srem:0726}
Thanks to Proposition~\ref{Sprop:2.1}(ii), solutions of \eqref{Seq:M} are concave and hence \eqref{Seq:M} 
can be reduced to \eqref{SBVP} under the assumption that
\begin{align}\label{S0726-2}
\psi\in C^2([0,\tfrac{1}{2}]) \quad \text{satisfies} \quad \psi'(0)\geq0 
\quad \text{and} \quad \psi''\geq0 \ \ \text{in}\ \ [0,\tfrac{1}{2}],
\end{align}
which includes \eqref{S0726-1}.
Then, from the argument in Section~\ref{Ssection:4}, we can prove Theorem~\ref{Sthm:1.1} under assumption \eqref{S0726-2} instead of \eqref{S0726-1}. 
On the other hand, in general, problem \eqref{Seq:P} cannot be reduced to \eqref{SBVP} under assumption \eqref{S0726-2}. 
Indeed, it is not so clear that Lemma~\ref{Sprop:3.2} holds under assumption \eqref{S0726-2}.
%Hence the following argument works so that \eqref{Sthm:1.1} also holds under \eqref{S0726-2}.
%On the other hand, it is not clear whether or not \eqref{Sthm:1.2} holds under \eqref{S0726-2}.
%In fact, when $u$ satisfies \eqref{Seq:1.1}, the curvature $\vk_u$ of $(x, u(x;\va))$ after reparametrization, can be represented as follows: there exists some $a\geq0$, $b\in\R$ such that
%\begin{align}\label{S0726-3}
%\vk_u(s)= \sqrt{2}a\, {\rm cn}(as+b),
%\end{align}
%where ${\rm cn}$ denotes Jacobi\UTF{00E2}\UTF{0080}\UTF{0099}s elliptic function with modulus $1/\sqrt{2}$
%(more precisely see \cite{Man} or \eqref{Ssubsec:4.3} below).
%The representation \eqref{S0726-3} implies that the set $\{\vk_u=0\}$ has Lebesgue measure zero 
%and \eqref{Sprop:3.2} may be justified in this viewpoint, when we suppose $\psi''\equiv0$.
%However, if $\psi''\not\equiv0$, the coincidence set $\{u=\psi\}$ may fatten and hence we cannot deduce where $u$ touches $\psi$, that is, the problem may be reduced to not \eqref{SBVP} but a free boundary problem.
\end{remark}
%%%%%%%%%%%%%%%%%%%%%%%%%%%%%%%%%%%%%%
%}

%%%%%%%%%%%%%%%%%%%%%%%%%%%%%%%%%%%%%%%%%%%%%%%%%%%%%%%%%%%%%%%%%%%%%%%%%%%%%%%%%%%%%
%%%%%%%%%%%%%%%%%%%%%%%%%%%%%%%%%%%%%%%%%%%%%%%%%%%%%%%%%%%%%%%%%%%%%%%%%%%%%%%%%%%%%
%%%%%%%%%%%%%%%%%%%%%%%%%%%%%%%%%%%%%%%%%%%%%%%%%%%%%%%%%%%%%%%%%%%%%%%%%%%%%%%%%%%%%

\section{Shooting method}\label{Ssection:4}

Due to the arguments in Section~\ref{Ssection:3}, solutions of \eqref{Seq:P} satisfy \eqref{SBVP}. 
In Subsection~\ref{Ssubsec:4.1} we show some properties of solutions of \eqref{SBVP}. 
Applying them, we show the uniqueness and the regularity of the solution of \eqref{Seq:P} in Subsection~\ref{Ssubsec:4.2}.

\subsection{Two-point boundary value problem}\label{Ssubsec:4.1}

In this subsection, we consider the multiplicity of solutions to \eqref{SBVP}, that is, 
\begin{align*}
\frac{1}{\sqrt{1+(u'(x))^2}} \frac{d}{dx}\left( \frac{\vk_{u}'(x)}{\sqrt{1+(u'(x))^2}} \right) +\frac{1}{2}\vk_{u}(x)^3 =0, \quad  0< x <\frac{1}{2}
\end{align*}
with the boundary conditions
\begin{align}\label{Seq:BC}
u(0)=0, \quad u''(0)=0, \quad u\Big(\frac{1}{2}\Big)=\psi\Big(\frac{1}{2}\Big), \quad 
u'\Big(\frac{1}{2}\Big)=0. 
\end{align}

We shall show that \eqref{SBVP} has at most one solution, using the shooting method. 
In the following we consider the initial condition 
\begin{align}\label{Seq:IC}
u(0)=0, \quad u'(0)=\va, \quad u''(0)=0, \quad u'''(0)=\vb
\end{align}
instead of the boundary condition \eqref{Seq:BC} and find the condition of $(\va,\vb)$ to satisfy $u(\tfrac{1}{2})=\psi(\tfrac{1}{2})$ and $u'(\tfrac{1}{2})=0$.
Since it follows from Proposition~\ref{Sprop:3.3} that solutions of \eqref{Seq:P} are concave, we only focus on $u'(0)=\va>0$.

To begin with, as discussed in \eqref{Seq:2.02}, we can reduce \eqref{Seq:1.1} into
\begin{align}\label{Seq:4.3}
\left(2\frac{u'''(x)}{(1+u'(x)^{2})^{\frac{5}{2}}}-5\frac{u''(x)^2u'(x)}{(1+u'(x)^{2})^{\frac{7}{2}}} \right)'=0.
\end{align}
Let us set
\[ u'(x)=:w(x). \]
Then by \eqref{Seq:4.3} $w$ satisfies 
\begin{align}\label{Seq:4.04}
\begin{split}
\frac{w''(x)}{(1+w(x)^2)^{\frac{5}{2}}}-\frac{5}{2}\frac{w'(x)^2w(x)}{(1+w(x)^{2})^{\frac{7}{2}}} 
%&\equiv const.\\
&=\frac{w''(0)}{(1+w(0)^2)^{\frac{5}{2}}}-\frac{5}{2}\frac{w'(0)^2w(0)}{(1+w(0)^{2})^{\frac{7}{2}}}\\
&= \dfrac{\vb}{(1+\va^2)^{\frac{5}{2}}},
\end{split}
\end{align}
where we used the initial data \eqref{Seq:IC}. 
Using $G$, which is defined in \eqref{Sdef-G}, we set
\begin{align}\label{Seq:set-y}
y(x): =G(w(x)) = G(u'(x)). 
\end{align}
Then combining \eqref{Seq:4.04} and \eqref{Seq:set-y} with
\begin{align*}
y'(x) = \frac{u''(x)}{(1+u'(x)^2)^{\frac{5}{4}}}, \quad
y''(x)=\frac{u'''(x)}{(1+u'(x)^2)^{\frac{5}{4}}}-\frac{5}{2}\frac{u''(x)^2u'(x)}{(1+u'(x)^2)^{\frac{9}{4}}}, 
\end{align*}
we obtain 
\begin{align}\label{Seq:4.05}
\dfrac{y''(x)}{\big(1+G^{-1}(y(x))^2\big)^{\frac{5}{4}}} = \dfrac{\vb}{(1+\va^2)^{\frac{5}{2}}}. 
\end{align}
By $w(0)=\va$ and $w'(0)=0$, we consider 
\begin{align}\label{Seq:4.06}
 y(0)=G(\va)>0 \quad\text{and}\quad  y'(0) =\dfrac{w'(0)}{(1+w(0)^2)^{\frac{5}{4}}}=0
\end{align}
as the initial data for \eqref{Seq:4.05}.
If $u$ satisfies $u'(\tfrac{1}{2})=0$, then $y$ must attain zero at $x=\tfrac{1}{2}$.  
Therefore at first we seek the condition $(\va,\vb)$ to satisfy $y(\tfrac{1}{2})=0$.
By the representation of \eqref{Seq:4.05} and the initial condition \eqref{Seq:4.06}, 
we notice that 
\begin{align*}%\label{Seq:4.07}
\vb<0
\end{align*}
if and only if the solution $y$ has zero. 
Furthermore, if $\vb<0$ then zero of $y$ is unique since $\vb<0$ implies that $y''(x)<0$. 
The point where $y$ achieves zero, which is often called \textit{time map formula}, is given as follows:

%%%%%%%%%%%%%%%%%%%%%%%%%%%%%%%%%%%%%%
\begin{lemma} \label{Sprop:4.1} 
Let $(\va,\vb)\in\R_{>0}\times\R_{<0}$ be arbitrary and $y$ be the solution of \eqref{Seq:4.05} with \eqref{Seq:4.06}. 
The point where $y$ achieves zero is given by
\begin{align}\label{Stime-map}
Z_{\va,\vb}
=\dfrac{(1+\va^2)^{\frac{5}{4}}}{\sqrt{2}\sqrt{|\vb|}}\int_0^{\va} \dfrac{1}{\sqrt{\va-t}} \dfrac{\dt}{(1+t^2)^{\frac{5}{4}}}.
\end{align}
\end{lemma}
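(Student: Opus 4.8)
The plan is to integrate the second-order ODE \eqref{Seq:4.05} twice, exploiting the fact that the right-hand side is a constant $-|\vb|/(1+\va^2)^{5/2}$, together with the initial data \eqref{Seq:4.06}. Since $y'(0)=0$ and $y''<0$ whenever $\vb<0$, the solution $y$ is strictly decreasing on $(0,Z_{\va,\vb})$ until it first hits zero, so I may treat $y$ as the independent variable on that interval. The trick is to rewrite \eqref{Seq:4.05} using the substitution $G^{-1}(y)=w=u'$, so that the factor $(1+G^{-1}(y)^2)^{5/4}$ becomes exactly $\frac{d}{dy}G^{-1}(y)$ by \eqref{Seq:1.05}; this should turn the equation into something with an explicit first integral.

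\textbf{Step 1 (first integral).} Multiply \eqref{Seq:4.05} by $y'(x)$ and rewrite the left-hand side. Using \eqref{Seq:1.05}, one has
\[
\frac{d}{dx}\Bigl(1+G^{-1}(y(x))^2\Bigr)^{-\frac14}\cdot(-4)
= \frac{y'(x)}{\bigl(1+G^{-1}(y(x))^2\bigr)^{5/4}},
\]
so multiplying \eqref{Seq:4.05} by $y'$ and integrating from $0$ to $x$, using $y'(0)=0$ and $G^{-1}(y(0))=G^{-1}(G(\va))=\va$, yields
\[
-4\left[\Bigl(1+G^{-1}(y(x))^2\Bigr)^{-\frac14}-\bigl(1+\va^2\bigr)^{-\frac14}\right]
= \frac{|\vb|}{(1+\va^2)^{5/2}}\,\bigl(y(0)^2-y(x)^2\bigr)\cdot\frac12\cdot(-1)?
\]
Here I would be careful with signs: since the RHS of \eqref{Seq:4.05} is $\vb/(1+\va^2)^{5/2}=-|\vb|/(1+\va^2)^{5/2}$, integrating $y''y'$ gives $\tfrac12\bigl(y'(x)^2-y'(0)^2\bigr)$ on one side after also multiplying through, so the cleaner route is: from \eqref{Seq:4.05}, $y''(x)=-\tfrac{|\vb|}{(1+\va^2)^{5/2}}(1+G^{-1}(y)^2)^{5/4}$; multiply by $y'$ and integrate to get
\[
\tfrac12 y'(x)^2 = -\frac{|\vb|}{(1+\va^2)^{5/2}}\int_0^x (1+G^{-1}(y))^{5/4}y'\,dx'
= -\frac{|\vb|}{(1+\va^2)^{5/2}}\bigl(G^{-1}(y(x))-\va\bigr),
\]
where the last equality again uses \eqref{Seq:1.05} to recognize the integrand as $\frac{d}{dx}G^{-1}(y(x))$. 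Since $y$ is decreasing from $G(\va)$ and $G^{-1}$ is increasing, $G^{-1}(y(x))\le\va$, so the right side is nonnegative, as it must be; this gives $y'(x)=-\sqrt{2}\,\frac{\sqrt{|\vb|}}{(1+\va^2)^{5/4}}\sqrt{\va-G^{-1}(y(x))}$ (negative root since $y$ decreases).

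\textbf{Step 2 (separate and integrate).} Write the first-order ODE as a separable equation and integrate $x$ from $0$ to $Z_{\va,\vb}$, with $y$ running from $G(\va)$ down to $0$:
\[
Z_{\va,\vb}=\frac{(1+\va^2)^{5/4}}{\sqrt{2}\sqrt{|\vb|}}\int_0^{G(\va)}\frac{dy}{\sqrt{\va-G^{-1}(y)}}.
\]
Then substitute $y=G(t)$, i.e. $t=G^{-1}(y)$, $dy=G'(t)\,dt=(1+t^2)^{-5/4}\,dt$, with $t$ running from $0$ to $\va$; this converts the integral into $\int_0^\va \frac{1}{\sqrt{\va-t}}\frac{dt}{(1+t^2)^{5/4}}$, which is exactly \eqref{Stime-map}. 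One should check the improper integral converges at $t=\va$ (integrable singularity $(\va-t)^{-1/2}$) so all manipulations are legitimate.

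\textbf{The main obstacle} I anticipate is purely bookkeeping: tracking the signs through the two integrations (the RHS constant is negative, $y'$ is negative, and $G^{-1}$ is increasing), and justifying that $y$ is strictly monotone on $(0,Z_{\va,\vb})$ so that the change of variables $x\leftrightarrow y$ is valid up to the endpoint — but that monotonicity is already noted in the text ($\vb<0\Rightarrow y''<0$, hence $y'$ strictly decreasing from $0$, hence $y$ strictly decreasing). There is no deep difficulty; the key idea is simply that \eqref{Seq:1.05} makes $(1+G^{-1}(y)^2)^{5/4}$ the exact derivative of $G^{-1}(y)$, which is what allows both integrations to be carried out in closed form.
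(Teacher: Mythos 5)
Your proposal is correct and follows essentially the same route as the paper: the paper's conserved quantity $y'^2-\vc(\va)F(y)$ with $F(s)=2G^{-1}(s)$ is exactly your first integral obtained by recognizing $(1+G^{-1}(y)^2)^{5/4}y'=\frac{d}{dx}G^{-1}(y(x))$ via \eqref{Seq:1.05}, and the separation of variables plus substitution $t=G^{-1}(y)$ is identical. (The only blemishes are the garbled first display in your Step~1, which you rightly discard, and a dropped square in one occurrence of $(1+G^{-1}(y)^2)^{5/4}$.)
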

%%%%%%%%%%%%%%%%%%%%%%%%%%%%%%%%%%%%%%
\begin{proof}
Fix $(\va,\vb)\in\R_{>0}\times\R_{<0}$ arbitrarily and let $y$ be the solution of \eqref{Seq:4.05} with \eqref{Seq:4.06}.
%Thanks to \eqref{Seq:4.07}, we can set $\vb=-b$ for some $b\in(0,\infty)$. 
Let us define
\[ f(t):= (1+G^{-1}(t)^2)^{\frac{5}{4}}, \quad F(X):=2\int_0^X f(t)\dt. \]
Set $Z\in(0,\infty)$ as the point where $y$ achieves zero.
Then we deduce from \eqref{Seq:4.05} that
\begin{align*}
\dfrac{d}{dx} \bigg( y'(x)^2 - \dfrac{\vb}{(1+\va^2)^{\frac{5}{2}}}F(y(x)) \bigg) 
&= 2y' \bigg( y'' -\dfrac{\vb}{(1+\va^2)^{\frac{5}{2}}}\big(1+G^{-1}(y)^2\big)^{\frac{5}{4}} \bigg)
=0, 
\end{align*}
which in combination with \eqref{Seq:4.06} gives  
\begin{align}\label{Seq:4.09}
y'(x)^2 - \dfrac{\vb}{(1+\va^2)^{\frac{5}{2}}}F(y(x)) = -\dfrac{\vb}{(1+\va^2)^{\frac{5}{2}}}F(y(0))
\quad \text{for}\quad x\in (0,Z).
\end{align}
Moreover, since $y'(0)=0$ and $y''<0$ in $(0,Z)$, we find $y'(x)<0$ in $(0,Z)$.
Combining this with \eqref{Seq:4.09},  we obtain 
\begin{align}\label{Seq:1103-3}
\begin{split}
y'(x) &=- \dfrac{\sqrt{|\vb|}}{(1+\va^2)^{\frac{5}{4}}}\sqrt{F(G(\va))-F(y(x))}\\
&=- \dfrac{\sqrt{|\vb|}}{(1+\va^2)^{\frac{5}{4}}}\sqrt{2\va-2G^{-1}(y(x))}.
\end{split}
\end{align}
Here we used   
\begin{align*}
F(s) = 2\int_0^s \big(1+G^{-1}(t)^2\big)^{\frac{5}{4}} \dt = 2\int_0^s \big( G^{-1}(t)\big)' \dt = 2G^{-1}(s),
\end{align*}
which follows from \eqref{Seq:1.05} and $G^{-1}(0)=0$. 
Integrating \eqref{Seq:1103-3} on $(0,Z)$, we obtain 
\begin{align*}
\dfrac{\sqrt{|\vb|}}{(1+\va^2)^{\frac{5}{4}}} Z
&=\int_0^{Z} -\dfrac{y'(t)}{\sqrt{2\va-2G^{-1}(y(t))}}\dt
=\int_0^{G(\va)} \dfrac{\ds}{\sqrt{2\va-2G^{-1}(s)}}, 
\end{align*}
where we used the change of the variables $s=y(t)$ in the last equality. 
Therefore we have
\begin{align*}
Z
=\dfrac{(1+\va^2)^{\frac{5}{4}}}{\sqrt{|\vb|}}\int_0^{G(\va)} \dfrac{\ds}{\sqrt{2\va-2G^{-1}(s)}}.
\end{align*}
By the change of variables $t=G^{-1}(s)$, we obtain \eqref{Stime-map}.
\end{proof}

By Lemma~\ref{Sprop:4.1}, for each $\va>0$ the map on $\R_{<0}$
\[ \vb \mapsto Z_{\va,\vb}\]
is strictly increasing and satisfies
\[ \lim_{\vb\uparrow0}Z_{\va,\vb}=\infty \quad \text{and}\quad \lim_{\vb\to-\infty}Z_{\va,\vb}=0.\]
Therefore \eqref{Stime-map} implies that for each $\va>0$ there exists a unique $\vb<0$ such that $Z_{\va,\vb}=1/2$. 
Replacing $Z_{\va,\vb}$ with $1/2$ in \eqref{Stime-map}, we obtain the following.

%%%%%%%%%%%%%%%%%%%%%%%%%%%%%%%%%%%%%%
\begin{proposition} \label{Sprop:4.2} 
For each $\va>0$, 
$Z_{\va,\vb}=1/2$ holds if and only if 
\begin{align}\notag
\vb=\vb_*(\va):= -2(1+\va^2)^{\frac{5}{2}}\left(\int_0^{\va}\dfrac{1}{\sqrt{\va-t}}\dfrac{\dt}{(1+t^2)^{\frac{5}{4}}} \right)^2.
\end{align}
\end{proposition}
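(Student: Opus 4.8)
The plan is to obtain the claim directly from the time-map formula \eqref{Stime-map} established in Lemma~\ref{Sprop:4.1}, so that Proposition~\ref{Sprop:4.2} amounts to no more than an explicit inversion of the time map at the value $1/2$. Fix $\va>0$. By Lemma~\ref{Sprop:4.1}, for every $\vb<0$ the solution $y$ of \eqref{Seq:4.05}--\eqref{Seq:4.06} reaches zero precisely at
\[
Z_{\va,\vb}=\dfrac{(1+\va^2)^{\frac{5}{4}}}{\sqrt{2}\sqrt{|\vb|}}\int_0^{\va}\dfrac{1}{\sqrt{\va-t}}\dfrac{\dt}{(1+t^2)^{\frac{5}{4}}},
\]
where the integral on the right is a finite, strictly positive number depending only on $\va$ (finiteness at the endpoint $t=\va$ comes from the integrable singularity $(\va-t)^{-1/2}$). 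Thus, for fixed $\va$, the map $\vb\mapsto Z_{\va,\vb}$ is an explicit, strictly increasing function of $\vb\in\R_{<0}$ with the boundary behaviour already recorded just before the statement.

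First I would impose $Z_{\va,\vb}=\tfrac12$ and solve for $|\vb|$: clearing the denominator and squaring the resulting identity gives
\[
|\vb|=2(1+\va^2)^{\frac{5}{2}}\left(\int_0^{\va}\dfrac{1}{\sqrt{\va-t}}\dfrac{\dt}{(1+t^2)^{\frac{5}{4}}}\right)^{2}.
\]
Since we are in the regime $\vb<0$, this is the same as $\vb=-|\vb|=\vb_*(\va)$. Because every operation used here (multiplying by the positive denominator, squaring a nonnegative quantity, taking the negative square root on $\R_{<0}$) is reversible, the computation proves both implications of the ``if and only if'' simultaneously; alternatively, the ``only if'' direction also follows from the strict monotonicity of $\vb\mapsto Z_{\va,\vb}$, which shows that $\vb_*(\va)$ is the unique negative value with $Z_{\va,\vb}=\tfrac12$.

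I do not expect a genuine obstacle: all the analytic content is contained in Lemma~\ref{Sprop:4.1}, and what remains is elementary algebra. The only point that deserves a line of care is the legitimacy of the squaring step — one must note that both sides are nonnegative before squaring, which holds since $|\vb|>0$ and the weighted integral is positive — so that the equivalence, and not merely one direction, is obtained.
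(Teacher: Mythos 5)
Your proposal is correct and follows exactly the paper's route: the paper likewise observes the strict monotonicity of $\vb\mapsto Z_{\va,\vb}$ on $\R_{<0}$ with the stated boundary limits, substitutes $Z_{\va,\vb}=\tfrac12$ into the time-map formula \eqref{Stime-map}, and solves for $\vb$. The algebra checks out ($\sqrt{|\vb|}=\sqrt{2}\,(1+\va^2)^{5/4}\cdot 2\cdot\tfrac12^{-1}\cdots$ indeed yields $|\vb|=2(1+\va^2)^{5/2}(\int_0^{\va}(\va-t)^{-1/2}(1+t^2)^{-5/4}\,dt)^2$), so nothing further is needed.
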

%%%%%%%%%%%%%%%%%%%%%%%%%%%%%%%%%%%%%%

Thus for $\va=u'(0)$, 
\[ u'''(0)=\vb_{*}(\va)\]
is needed in \eqref{Seq:IC} so that the solution $u$ of \eqref{Seq:1.1} with \eqref{Seq:IC} satisfies $u'(\tfrac{1}{2})=0$. 
Hence we should consider 
\begin{align}\label{Su_*}
u(0)=0, \quad u'(0)=\va, \quad u''(0)=0, \quad u'''(0)=\vb_*(\va). 
\end{align}

Next we investigate the relationship between $u'(0)=\va$ and $u(\tfrac{1}{2})=\psi(\tfrac{1}{2})$.
To this end, hereafter we consider only the case $\vb=\vb_{*}(\va)$ in \eqref{Seq:IC}. 
Let $u(x;\va)$ denote the solution of \eqref{Seq:1.1} with \eqref{Su_*}. 
Then $y(x;\va):=G(u'(x;\va))$ is the solution of
\begin{align}\label{Sy_a}
\dfrac{y''(x)}{\big(1+G^{-1}(y(x))^2\big)^{\frac{5}{4}}} = \dfrac{\vb_*(\va)}{(1+\va^2)^{\frac{5}{2}}}
\end{align}
with the initial data \eqref{Seq:4.06}. 
For short we denote the right-hand side of \eqref{Sy_a} by
\begin{align}\notag%\label{Svc}
\vc(\va) := \dfrac{\vb_*(\va)}{(1+\va^2)^{\frac{5}{2}}}
= -2\left(\int_0^{\va}\dfrac{1}{\sqrt{\va-t}}\dfrac{\dt}{(1+t^2)^{\frac{5}{4}}} \right)^2. 
\end{align}

%%%%%%%%%%%%%%%%%%%%%%%%%%%%%%%%%%%%%%
\begin{lemma} \label{Slem:4.2} 
Let $u(x;\va)$ be the solution of \eqref{Seq:1.1} with \eqref{Su_*} for $\va>0$. 
Then
\begin{align}\label{Seq:4.31}
u \Big(\frac{1}{2};\va\Big) =\frac{\displaystyle \int_0^{\va}\frac{s}{\sqrt{\va-s}}\frac{\ds}{(1+s^2)^{\frac{5}{4}}}}{\displaystyle 2\int_0^{\va}\frac{1}{\sqrt{\va-s}}\frac{\ds}{(1+s^2)^{\frac{5}{4}}}}.
\end{align}
\end{lemma}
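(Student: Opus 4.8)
The plan is to compute $u(\tfrac12;\va)$ directly by integrating $u'$, exploiting the explicit formula for $y'=G(u')'$ already derived in the proof of Lemma~\ref{Sprop:4.1}. Recall that with $\vb=\vb_*(\va)$ the time map equals $\tfrac12$, so $y(x;\va)=G(u'(x;\va))$ satisfies $y(0)=G(\va)$, $y'(0)=0$, and $y$ decreases monotonically to $y(\tfrac12)=0$; equivalently $u'(0)=\va$, $u'(\tfrac12)=0$, and $u'(\cdot;\va)$ is strictly decreasing on $(0,\tfrac12)$. The starting point is
\begin{align*}
u\Big(\frac{1}{2};\va\Big) = u(0;\va) + \int_0^{\frac{1}{2}} u'(x;\va)\dx = \int_0^{\frac{1}{2}} u'(x;\va)\dx,
\end{align*}
since $u(0;\va)=0$.

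Next I would change variables to turn the $x$-integral into a $t$-integral over $u'$. From \eqref{Seq:1103-3}, using $\vb=\vb_*(\va)$ so that $|\vb|/(1+\va^2)^{5/2}=|\vc(\va)|=2\big(\int_0^\va(\va-t)^{-1/2}(1+t^2)^{-5/4}\dt\big)^2$, we have
\begin{align*}
y'(x) = -\sqrt{|\vc(\va)|}\,\sqrt{2\va - 2G^{-1}(y(x))} = -\sqrt{2|\vc(\va)|}\,\sqrt{\va - u'(x;\va)}.
\end{align*}
Writing $y'=\big(1+u'^2\big)^{-5/4}u''$, this gives $u''(x;\va)=-\sqrt{2|\vc(\va)|}\,(1+u'^2)^{5/4}\sqrt{\va-u'}$, so the substitution $t=u'(x;\va)$ (valid because $u'(\cdot;\va)$ is a strictly decreasing $C^1$ bijection of $[0,\tfrac12]$ onto $[0,\va]$) transforms $\dx = \frac{dt}{u''}$ and hence
\begin{align*}
u\Big(\frac{1}{2};\va\Big) = \int_{\va}^{0} t\cdot\frac{dt}{u''(x;\va)} = \frac{1}{\sqrt{2|\vc(\va)|}}\int_0^{\va}\frac{t}{\sqrt{\va-t}}\frac{dt}{(1+t^2)^{5/4}}.
\end{align*}
Substituting $\sqrt{2|\vc(\va)|} = 2\int_0^\va(\va-t)^{-1/2}(1+t^2)^{-5/4}\dt$ yields exactly \eqref{Seq:4.31}. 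An entirely equivalent route, which may be cleaner to present, is to integrate by parts: $\int_0^{1/2}u'\dx = [x u']_0^{1/2} - \int_0^{1/2}x u''\dx = -\int_0^{1/2}x u''\dx$ (as $u'(\tfrac12)=0$), then use the same change of variables together with the time-map identity $x = Z$-type formula expressing $x$ as an integral of $(\va-t)^{-1/2}$; but the direct substitution above is the most economical.

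The only genuine point requiring care—the main (mild) obstacle—is justifying the change of variables near the endpoint $x=\tfrac12$, where $u''(x;\va)\to 0$ like $\sqrt{\va-u'}$ and thus the integrand $t/u''$ is singular there as a function of $x$; after the substitution the singularity becomes the harmless integrable factor $(\va-t)^{-1/2}$, so one should phrase the argument as: $u'(\cdot;\va)$ maps $[0,\tfrac12-\delta]$ diffeomorphically onto $[u'(\tfrac12-\delta;\va),\va]$, perform the substitution there, and pass to the limit $\delta\downarrow0$ using that both sides converge (the right side by dominated convergence, since $(\va-t)^{-1/2}(1+t^2)^{-5/4}\in L^1(0,\va)$). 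Everything else is the bookkeeping already laid out in the proof of Lemma~\ref{Sprop:4.1}, specialized to $\vb=\vb_*(\va)$.
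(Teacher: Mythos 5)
Your proof is correct and is essentially the paper's proof: the paper performs the change of variables in two steps (first $s=y(x)$ using \eqref{Seq:1103-3}, then $x=G^{-1}(s)$), which compose to exactly your single substitution $t=u'(x;\va)$, and both arguments finish with the same identity \eqref{S0726-4} for $|\vb_*(\va)|^{1/2}(1+\va^2)^{-5/4}$ to arrive at \eqref{Seq:4.31}. One small correction to your closing remark: the substitution degenerates at $x=0$, not at $x=\tfrac12$ --- indeed $u''(0;\va)=0$ with $t=u'(0;\va)=\va$, which is where the integrable factor $(\va-t)^{-1/2}$ originates, whereas at $x=\tfrac12$ one has $\sqrt{\va-u'(\tfrac12;\va)}=\sqrt{\va}>0$ and $u''(\tfrac12;\va)<0$; so the truncation-and-limit argument you describe should be applied near $x=0$ (the argument itself goes through verbatim after this relabeling).
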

%%%%%%%%%%%%%%%%%%%%%%%%%%%%%%%%%%%%%%
\begin{proof}
Let $y(x;\va)$ be the function given by $y(x;\va)=G(u'(x;\va))$.
Then $y(\cdot;\va)$ satisfies \eqref{Sy_a} and  
 $y(\cdot;\va)$ is strictly decreasing in $(0,1/2)$ by \eqref{Seq:1103-3}.
Using this $y$, we have
\begin{align*}
u\Big(\frac{1}{2};\va\Big)=\int_0^{\frac{1}{2}}u'(x;\va)\dx=\int_0^{\frac{1}{2}} G^{-1}(y(x;\va))\dx,
\end{align*}
where we used $u(0;\va)=0$.
We infer from the change of variables $y(x;\va)=s$ and \eqref{Seq:1103-3} that
\begin{align*}
u\Big(\frac{1}{2};\va\Big)
&=\int_{G(\va)}^0 G^{-1}(s)\frac{1}{y'(x;\va)}\ds \\
&=\int_{G(\va)}^0 G^{-1}(s) \left(-\frac{(1+\va^2)^{\frac{5}{4}}}{|\vb_*(\va)|^{\frac{1}{2}}}\frac{1}{\sqrt{2\va-2G^{-1}(s)}}\right) \ds. 
\end{align*}
By the change of variables $G^{-1}(s)=x$, we have
\begin{align*}
u\Big(\frac{1}{2};\va\Big)&=\frac{(1+\va^2)^{\frac{5}{4}}}{|\vb_*(\va)|^{\frac{1}{2}}} \int_0^{\va}\frac{x}{\sqrt{2\va-2x}}\frac{1}{(1+x^2)^{\frac{5}{4}}} \dx %\notag\\
=\frac{\displaystyle \int_0^{\va}\frac{x}{\sqrt{\va-x}}\frac{\dx}{(1+x^2)^{\frac{5}{4}}}}{\displaystyle 2\int_0^{\va}\frac{1}{\sqrt{\va-s}}\frac{\ds}{(1+s^2)^{\frac{5}{4}}}}, 
\end{align*}
which is the desired formula. 
Here we used
%\textcolor{red}{ 
\begin{align}\label{S0726-4}
\frac{|\vb_*(\va)|^{\frac{1}{2}}}{(1+\va^2)^{\frac{5}{4}}} =  \sqrt{2}\int_0^{\va}\frac{1}{\sqrt{\va-s}}\frac{\ds}{(1+s^2)^{\frac{5}{4}}}, 
\end{align}
%}
which follows from \eqref{Sprop:4.2}.
We complete the proof.
\end{proof}
Let us set 
\begin{align}\label{Seq:1104-3}
I(\va):=\int_0^{\va}\frac{\sqrt{\va}}{\sqrt{\va-s}}\frac{\ds}{(1+s^2)^{\frac{5}{4}}}, 
\quad 
J(\va):=\int_0^{\va}\frac{\sqrt{\va}}{\sqrt{\va-x}}\frac{x}{(1+x^2)^{\frac{5}{4}}}\dx.
\end{align}
Then thanks to Lemma~\ref{Slem:4.2}, we notice that 
\[ u\Big(\frac{1}{2};\va\Big)=\frac{J(\va)}{2I(\va)}. \]

%%%%%%%%%%%%%%%%%%%%%%%%%%%%%%%%%%%%%%
\begin{lemma} \label{Slem:4.3} 
For $J(\va)$ given by \eqref{Seq:1104-3}, 
 it holds that
\[
J'(\va) > 0 \quad \text{for} \quad \va>0.
\]
\end{lemma}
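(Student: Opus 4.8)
The plan is to remove the square‑root singularity by a single change of variables that moves all $\va$‑dependence inside the integrand, and then differentiate under the integral sign. Substituting $x=\va(1-\sigma^2)$ with $\sigma\in[0,1]$ (so that $\va-x=\va\sigma^2$ and $dx=-2\va\sigma\,d\sigma$, whence $\sqrt{\va}/\sqrt{\va-x}=1/\sigma$) turns the definition of $J$ into
\begin{align*}
J(\va)=2\va^2\int_0^1\frac{1-\sigma^2}{\bigl(1+\va^2(1-\sigma^2)^2\bigr)^{5/4}}\,d\sigma
=2\int_0^1\frac{m\bigl(\va(1-\sigma^2)\bigr)}{1-\sigma^2}\,d\sigma,
\qquad m(\xi):=\frac{\xi^2}{(1+\xi^2)^{5/4}}.
\end{align*}
Since $m(\xi)=O(\xi^2)$ as $\xi\to0$ and $m'$ is bounded on $[0,\infty)$, the integrand and its $\va$‑derivative are bounded uniformly for $\va$ in compact subsets of $(0,\infty)$; differentiating under the integral sign (the chain‑rule factor $1-\sigma^2$ cancels the denominator) gives
$$J'(\va)=2\int_0^1 m'\bigl(\va(1-\sigma^2)\bigr)\,d\sigma.$$

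The obstacle is that $m'(\xi)=\tfrac12\xi(4-\xi^2)(1+\xi^2)^{-9/4}$ is positive only for $\xi<2$, so for $\va>2$ the integrand of $J'$ changes sign and positivity is not immediate. The identity that resolves this is the elementary computation
$$m(\xi)+2\xi\,m'(\xi)=\frac{5\xi^2}{(1+\xi^2)^{9/4}}>0\qquad(\xi>0)$$
(equivalently, $\tfrac{d}{d\xi}\bigl(\sqrt\xi\,m(\xi)\bigr)=\tfrac52\xi^{3/2}(1+\xi^2)^{-9/4}>0$). To bring it into play I would write $1=(1-\sigma^2)+\sigma^2$ in the integrand of $J'$ and integrate the $\sigma^2$‑part by parts, using $\frac{d}{d\sigma}m\bigl(\va(1-\sigma^2)\bigr)=-2\va\sigma\,m'\bigl(\va(1-\sigma^2)\bigr)$; the boundary terms $\bigl[\sigma\,m(\va(1-\sigma^2))\bigr]_0^1$ vanish because of the factor $\sigma$ at $\sigma=0$ and because $m(0)=0$ at $\sigma=1$. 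Collecting the remaining terms and applying the identity above with $\xi=\va(1-\sigma^2)$ leaves
$$J'(\va)=5\va\int_0^1\frac{(1-\sigma^2)^2}{\bigl(1+\va^2(1-\sigma^2)^2\bigr)^{9/4}}\,d\sigma,$$
which is manifestly positive for every $\va>0$.

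In short, the whole content is the algebraic identity $m+2\xi m'=5\xi^2(1+\xi^2)^{-9/4}$ together with the splitting $1=(1-\sigma^2)+\sigma^2$ and one integration by parts; everything else (the change of variables, differentiation under the integral, vanishing of boundary terms) is routine and hinges only on $m(\xi)=O(\xi^2)$ near $\xi=0$. I do not expect any genuine difficulty once this identity is spotted; the only mildly delicate point worth a sentence is the justification of differentiating under the integral sign near $\sigma=1$, where $m(\va(1-\sigma^2))/(1-\sigma^2)=O(\va^2(1-\sigma^2))\to0$.
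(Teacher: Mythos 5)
Your argument is correct, and the two computational pivots check out: the substitution $x=\va(1-\sigma^2)$ indeed gives $J(\va)=2\va^2\int_0^1(1-\sigma^2)\bigl(1+\va^2(1-\sigma^2)^2\bigr)^{-5/4}\,d\sigma$, the identity $m(\xi)+2\xi m'(\xi)=5\xi^2(1+\xi^2)^{-9/4}$ holds with $m'(\xi)=\tfrac12\xi(4-\xi^2)(1+\xi^2)^{-9/4}$, the boundary terms in your integration by parts vanish, and the resulting formula $J'(\va)=5\va\int_0^1(1-\sigma^2)^2\bigl(1+\va^2(1-\sigma^2)^2\bigr)^{-9/4}\,d\sigma$ is consistent (for instance it yields $J'(\va)\sim\tfrac83\va$ as $\va\to0$, matching $J(\va)\sim\tfrac43\va^2$). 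Your route is, however, genuinely different from the paper's. The paper rescales $x=\va t$, identifies $J(\va)$ with a Gaussian hypergeometric function evaluated at $j(\va)=\va^2/(1+\va^2)$ via a binomial-series computation and Pfaff's transformation (this is the content of Appendix~\ref{Ssect:hypergeom}), and then differentiates the series term by term, positivity following from positivity of the Pochhammer coefficients together with $j'(\va)>0$. Your proof is more elementary and self-contained: after removing the square-root singularity you differentiate under the integral sign and convert $J'$ into a single manifestly positive integral by one integration by parts combined with the identity above, thereby bypassing the special-function machinery entirely; moreover it delivers an explicit closed integral formula for $J'(\va)$ (equivalently $J'(\va)=\tfrac{5\va}{2}\int_0^1 t^2(1-t)^{-1/2}(1+\va^2t^2)^{-9/4}\,dt$), hence quantitative lower bounds, which the series argument does not give directly. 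The only point to spell out in a written version is the domination justifying differentiation under the integral sign, namely boundedness of $m'$ on $[0,\infty)$ and local boundedness in $\va$ of the integrand, which you already note.
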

%%%%%%%%%%%%%%%%%%%%%%%%%%%%%%%%%%%%%%
\begin{proof}
% REVISED !!! % \textcolor{red}{
By the change of variables we reduce $J(\va)$ into
\begin{align}\label{Seq:1208-1}
 J(\va)= \int_0^1 \frac{t}{\sqrt{1-t}}\frac{\va^2}{(1+\va^2t^2)^{\frac{5}{4}}}\dt. 
\end{align}
Let ${}_2F_1$ denote the Gaussian hypergeometric function (cf.\ Definition~\ref{Sdef:hypergeom}).
Note  that for each $\va>0$
\begin{align}
\int_0^1 \frac{t}{\sqrt{1-t}}\frac{\va^2}{(1+\va^2t^2)^{\frac{5}{4}}}\dt
&= \frac{2}{3}\frac{\va^2}{1+\va^2} {}_2F_1\big[1, \tfrac{3}{2}; \tfrac{7}{4}; \tfrac{\va^2}{1+\va^2} \big] 
\label{Seq:2205-1} \\
&= \frac{2}{3}\frac{\va^2}{1+\va^2} \sum_{n=0}^{\infty} \frac{(1)_n (\tfrac{3}{2})_n}{(\frac{7}{4})_n \, n!} \Big(\frac{\va^2}{1+\va^2} \Big)^n \notag
\end{align}
(see Proposition~\ref{Sprop:2205-1} for a rigorous derivation).
Here $(x)_n$ is the Pochhammer symbol, and $(x)_n >0$ holds for any $n\in\N\cup\{0\}$ and $x>0$.
% and using Gamma function $\Gamma$ we can write 
% \[(x)_n = \frac{\Gamma(x+n)}{\Gamma(x)}, \quad x\in \R, \ \ n\in \N\cup\{0\}. \]
Therefore, setting 
\[ j(\va):= \frac{\va^2}{1+\va^2} \quad \text{for} \quad \alpha>0,  \]
%whose derivative is positive, 
we obtain 
\begin{align}
\begin{split}
J'(\va)&= \frac{2}{3}j'(\va) \sum_{n=0}^{\infty} \frac{(1)_n (\tfrac{3}{2})_n}{(\frac{7}{4})_n \, n!} j(\va)^n \\
&\qquad + \frac{2}{3}j(\va) j'(\va)\sum_{n=1}^{\infty} \frac{(1)_n (\tfrac{3}{2})_n}{(\frac{7}{4})_n \, (n-1)!} j(\va)^{n-1} >0
\end{split}
\end{align}
with the help of the fact that $0<j(\alpha)<1$.
The proof is complete.
% REVISED !!! %}
\if0
By the change of variable we reduce $J(\va)$ into
\begin{align}\label{Seq:1208-1}
 J(\va)= \int_0^1 \frac{t}{\sqrt{1-t}}\frac{\va^2}{(1+\va^2t^2)^{\frac{5}{4}}}\dt,
\end{align}
and then by differentiating \eqref{Seq:1208-1} we have
\begin{align}\label{Seq:1208-2}
J'(\va)
&=\frac{\va}{2}\int_0^1\frac{t}{\sqrt{1-t}}\frac{4-\va^2t^2}{(1+\va^2t^2)^{\frac{9}{4}}}\dt 
=\frac{1}{2\sqrt{\va}}\int_0^{\va}\frac{t}{\sqrt{\va-t}}\frac{4-t^2}{(1+t^2)^{\frac{9}{4}}}\dt.
\end{align}
By \eqref{Seq:1208-2} we find that $J'(\va)>0$ holds for $\va\leq2$. 
Hence hereafter we assume that $\va>2$. 
Then we obtain 
\begin{align}\label{Seq:1208-3}
\begin{split}
2\sqrt{\va} \,J'(\va)
&=\int_0^{2}\frac{t}{\sqrt{\va-t}}\frac{4-t^2}{(1+t^2)^{\frac{9}{4}}}\dt 
-\int_2^{\va}\frac{t}{\sqrt{\va-t}}\frac{t^2-4}{(1+t^2)^{\frac{9}{4}}}\dt\\
&\geq \int_0^{2}\frac{t}{\sqrt{\va-t}}\frac{4-t^2}{(1+t^2)^{\frac{9}{4}}}\dt 
-\frac{2}{3}\int_2^{\va}\frac{1}{(\va-t)^{\frac{3}{4}}}\dt-\frac{1}{3}\int_2^{\va}\frac{t^3(t^2-4)^3}{(1+t^2)^{\frac{27}{4}}}\dt\\
&=:j_1(\va)-\frac{2}{3}\,j_2(\va)-\,\frac{1}{3}j_3(\va).
\end{split}
\end{align}
It follows immediately that $j'_1(\va)<0$, $j'_2(\va)>0$ and $j'_3(\va)>0$. 
Therefore 
\[ j(\va):=\frac{1}{2\sqrt{\va}}\bigg( j_1(\va)-\frac{2}{3}j_2(\va)-\frac{1}{3}j_3(\va) \bigg) \]
is strictly decreasing.
Moreover, since it holds that
\[
j_1(\va) = O(\va^{-\frac{1}{2}}), \quad 
j_2(\va) = O(\va^{\frac{1}{4}}), \quad 
j_3(\va) = O(1) \quad (\va\to\infty), 
\]
we observe that
\[j(\va) \to0 \quad \text{as}\quad \va\to\infty.\] 
This together with \eqref{Seq:1208-3} implies that 
\[J'(\va)\geq j(\va) > 0 \quad \text{for}\quad \va>2.\] 
We complete the proof.
\fi
\end{proof}

Next, we show that $\va\mapsto u(\frac{1}{2};\va)$ is strictly increasing. 
To this end, let us set
\[ \xi(x,\va):=\frac{\pd y}{\pd\va}(x;\va), \]
where we regarded $y(x;\va)$ as a function on $[0,\frac{1}{2}]\times\R_{>0}$.
Then it follows from \eqref{Sy_a} that
\begin{align}\label{Seq:1103-1}
\xi''-\vc(\va)f'(y)\xi-\vc'(\va)f(y)=0, 
\end{align} 
where $f(t)=(1+G^{-1}(t)^2)^{\frac{5}{4}}$ and $'=\frac{\pd}{\pd x}$.
By the definition $f'(t)\geq0$ holds for $t\geq0$.
Moreover, $y(0;\va)=G(\va)$, $y'(0;\va)=0$, and $y(\frac{1}{2};\va)=0$ imply that
\begin{align*}
\xi(0,\va)=G'(\va), \quad \xi'(0,\va)=0, \quad \text{and}\quad \xi\Big(\frac{1}{2},\va\Big)=0.
\end{align*}

%%%%%%%%%%%%%%%%%%%%%%%%%%%%%%%%%%%%%%
\begin{proposition} \label{Sprop:4.3} 
Let $u(x;\va)$ be the solution of \eqref{Seq:1.1} with \eqref{Su_*} for $\va>0$. 
Then
\begin{align}\label{Seq:4.34}
\frac{d }{d \va} \left[u\Big(\frac{1}{2};\va\Big)\right] > 0 \quad\text{for} \quad \va>0.
\end{align}
\end{proposition}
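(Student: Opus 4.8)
The plan is to exploit the representation $u(\tfrac{1}{2};\va)=\int_0^{1/2}G^{-1}(y(x;\va))\,dx$ (which uses $u(0;\va)=0$) and differentiate in $\va$. Since $\vc(\cdot)$ is smooth on $(0,\infty)$, standard ODE theory makes $y(x;\cdot)$ jointly $C^1$ with $\partial_\va y=\xi$, so differentiating under the integral sign and invoking \eqref{Seq:1.05} gives
\[
\frac{d}{d\va}u\Big(\frac{1}{2};\va\Big)=\int_0^{1/2} f(y)\,\xi\,dx,\qquad f(t):=(1+G^{-1}(t)^2)^{\frac{5}{4}}.
\]
Everything then reduces to showing that this integral is positive, and the natural tool is the linear equation \eqref{Seq:1103-1} for $\xi$ together with its boundary data $\xi(0,\va)=G'(\va)$, $\xi'(0,\va)=0$, $\xi(\tfrac{1}{2},\va)=0$.

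The key structural fact is that differentiating \eqref{Sy_a} in $x$ shows $y'$ solves the homogeneous equation associated with \eqref{Seq:1103-1}; reduction of order is then available since $y'<0$ on $(0,\tfrac{1}{2}]$ (recall $y'(\tfrac{1}{2})=u''(\tfrac{1}{2})<0$ by Proposition~\ref{Sprop:3.3}). Setting $W:=\xi'y'-\xi y''$, a short computation from \eqref{Seq:1103-1} and \eqref{Sy_a} gives $W'=\vc'(\va)f(y)y'$, and combining this with $\int_0^x f(y)y'\,dt=u'(x;\va)-\va$ (the identity used in the proof of Lemma~\ref{Sprop:4.1}) and the boundary data yields the closed form $W(x)=-\vc(\va)+\vc'(\va)\big(u'(x;\va)-\va\big)$. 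Since $(\xi/y')'=W/y'^2$ and $\xi(\tfrac{1}{2},\va)=0$, one gets $\xi(x,\va)=-y'(x)\int_x^{1/2}W(t)\,y'(t)^{-2}\,dt$. Substituting this into the integral above and exchanging the order of integration — legitimate because the integrand is absolutely integrable, the singularity of $y'(t)^{-2}$ at $t=0$ being compensated by the second-order vanishing of both $\va-u'(t;\va)$ and $y'(t)^2$ there — leads to the identity
\[
\frac{d}{d\va}u\Big(\frac{1}{2};\va\Big)=-\vc(\va)\,A_1(\va)-\vc'(\va)\,A_2(\va),\qquad A_k(\va):=\int_0^{1/2}\frac{(\va-u'(t;\va))^k}{y'(t;\va)^2}\,dt>0.
\]

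To finish I would use two facts. First, $u$ is concave with $u'(\tfrac{1}{2};\va)=0$ (Proposition~\ref{Sprop:3.3}), so $0\le u'(t;\va)<\va$ on $(0,\tfrac{1}{2})$ and hence $A_2(\va)<\va\,A_1(\va)$. Second, directly from the definitions of $\vc$ and of $I$ in \eqref{Seq:1104-3} one has $\vc(\va)=-2I(\va)^2/\va$, so $-\vc(\va)=2I(\va)^2/\va>0$ and $-\vc(\va)-\va\,\vc'(\va)=4I(\va)I'(\va)$. Now argue by cases. If $\vc'(\va)\le0$, the two terms in the displayed identity are respectively positive and nonnegative. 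If $\vc'(\va)>0$ and $I'(\va)\ge0$, then $A_2<\va A_1$ gives $-\vc A_1-\vc'A_2>(-\vc-\va\vc')A_1=4II'A_1\ge0$. In the one remaining case, $\vc'(\va)>0$ and $I'(\va)<0$, I would abandon $\xi$ and instead use the explicit formula $u(\tfrac{1}{2};\va)=J(\va)/(2I(\va))$ of Lemma~\ref{Slem:4.2}: there $\frac{d}{d\va}u(\tfrac{1}{2};\va)=(J'I-JI')/(2I^2)>0$, because $J'(\va)>0$ by Lemma~\ref{Slem:4.3}, $I(\va),J(\va)>0$, and $I'(\va)<0$. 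Together these cases give \eqref{Seq:4.34}.

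The main obstacle is the middle step: deriving the usable identity $\frac{d}{d\va}u(\tfrac{1}{2};\va)=-\vc A_1-\vc'A_2$ — this needs the closed form for $W$, an elementary but necessary check of the behaviour at $t=0$ (both for the Fubini exchange and for the convergence of $A_1,A_2$), and the recognition that $-\vc-\va\vc'=4II'$ is precisely the identity that dovetails the $\xi$-computation with Lemma~\ref{Slem:4.3}. I do not expect a single maximum-principle argument to prove $\xi\ge0$ directly, since when $\vc'(\va)>0$ the function $\xi$ genuinely changes sign near $x=\tfrac{1}{2}$; this is exactly why the case split and Lemma~\ref{Slem:4.3} are needed.
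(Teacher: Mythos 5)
Your proposal is correct, and its skeleton coincides with the paper's: both work with the linearization $\xi=\partial_\va y$, both exploit the Wronskian identity \eqref{Seq:1104-1} (your $W=\xi'y'-\xi y''$ is exactly $\xi'y'-\xi\vc(\va)f(y)$, since $y''=\vc(\va)f(y)$), both end up with the same three cases (your condition $I'(\va)<0$ is equivalent to the paper's $\va\vc'(\va)+\vc(\va)>0$ via $\va\vc(\va)=-2I(\va)^2$), and the final case is handled identically through $u(\tfrac12;\va)=J(\va)/(2I(\va))$ and Lemma~\ref{Slem:4.3}. The genuine difference is how the first two cases are settled: the paper proves $\xi\geq0$ pointwise by qualitative arguments (a sign-propagation argument from \eqref{Seq:1103-6} when $\vc'\leq0$, and a contradiction obtained by integrating \eqref{Seq:1104-1} when $\vc'>0$, $\va\vc'+\vc\leq0$), whereas you solve for $\xi$ explicitly by reduction of order along the homogeneous solution $y'$ and obtain the quantitative identity $\frac{d}{d\va}u(\tfrac12;\va)=-\vc(\va)A_1-\vc'(\va)A_2$, which settles both cases by sign bookkeeping. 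Your route buys a unified closed formula (and in fact simplifies further: by \eqref{Seq:1103-5} one has $(\va-u')/y'^2\equiv 1/(2|\vc(\va)|)$, so $A_1=1/(4|\vc(\va)|)$), at the price of the integrability/Fubini checks near $t=0$, which you correctly identify and which do go through. Two small remarks: the constant in your closed form relies on $\xi(0,\va)=G'(\va)=(1+\va^2)^{-5/4}$ (the paper misprints the exponent, but only positivity is used there, while your $W(0)=-\vc(\va)$ needs the correct value — your formula is right); and your closing comment that $\xi$ changes sign whenever $\vc'(\va)>0$ is only guaranteed when additionally $\va\vc'(\va)+\vc(\va)>0$ — in the intermediate regime the paper shows $\xi\geq0$ — but this side remark plays no role in your argument.
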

%%%%%%%%%%%%%%%%%%%%%%%%%%%%%%%%%%%%%%
\begin{proof}
To begin with, recall that $y(\cdot;\va)$ is decreasing in $[0,\frac{1}{2}]$ for each $\va>0$, in partucular,
\begin{align}\label{Seq:4.24}
0<y(x;\va)<y(0;\va)=G(\va) \quad \text{and} \quad y'(x;\va)<0 \quad\text{for}\quad 0<x <\frac{1}{2}.
\end{align}
We infer from \eqref{Seq:1103-3} that
\begin{align}\label{Seq:1103-5}
y'(x;\va)^2 &=- \vc(\va)\big(2\va-2G^{-1}(y(x;\va))\big).
\end{align}
Differentiating \eqref{Seq:1103-5} with respect to $\va$, we have 
\begin{align}\label{Seq:1103-6}
2y'\cdot\xi'
=-2\vc'(\va)\bigg[ \va-G^{-1}(y) \bigg]
-2\vc(\va)\bigg[ 1-\Big( 1+G^{-1}(y)^2 \Big)^{\frac{5}{4}} \xi\bigg].
\end{align}
Set $\Gamma_{+}:=\Set{\va\in(0,\infty) | \vc'(\va)>0}$ and $\Gamma_{-}:=\Set{\va\in(0,\infty) | \vc'(\va)\leq0}$. 
We divide the proof into two cases.

\smallskip

\textbf{Case I}. {\sl We show \eqref{Seq:4.34} for $\va\in\Gamma_{-}$.\ } 
Fix $\va\in\Gamma_{-}$ arbitrarily.
Combining \eqref{Seq:1103-6} with \eqref{Seq:4.24}, $\vc(\va)<0$, and $\vc'(\va)\leq0$, we deduce that
\begin{align*}
\text{
if $c\in(0,\tfrac{1}{2})$ satisfies $\xi(c,\va)\leq0$, then $c$ also satisfies $\xi'(c,\va)<0$.
}
\end{align*}
Therefore if there is a point $c\in(0,\frac{1}{2})$ such that $\xi(c,\va)=0$, then $\xi(x,\va)<0$ holds for $x\in(c,\frac{1}{2}]$, 
which contradicts $\xi(\frac{1}{2},\va)=0$. 
Therefore we may assume that $\xi(x,\va)\ne0$ for $x\in[0,\frac{1}{2})$.
This together with $\xi(0,\va)=G'(\va)=(1+\va^2)^{\frac{5}{4}}>0$ implies that
\begin{align}\label{Seq:1103-7}
\xi(x,\va)\geq 0 \quad \text{for}\quad 0\leq x \leq \frac{1}{2}.
\end{align} 
On the other hand, it follows from $u(0;\va)=0$ that
\begin{align*}
u\Big(\frac{1}{2};\va\Big) 
= \int_0^{\frac{1}{2}}u'(x;\va) \dx = \int_0^{\frac{1}{2}}G^{-1}(y(x;\va)) \dx,
\end{align*}
which in combination with \eqref{Seq:1103-7} gives
\begin{align}\label{Seq:4.35}
\frac{d }{d \va} \left[u\Big(\frac{1}{2};\va\Big)\right]
=\int_0^{\frac{1}{2}}\big(1+G^{-1}(y)^2\big)^{\frac{5}{4}}\xi(x,\va) \dx \geq0.
\end{align}
The last equality does not hold due to $\xi(0,\va)>0$. 
Therefore we obtain \eqref{Seq:4.34} for $\va\in\Gamma_{-}$.

\smallskip

\textbf{Case II}. {\sl Show \eqref{Seq:4.34} for $\va\in\Gamma_{+}$.\ } 
Since $y(\frac{1}{2};\va)=0$ and $\xi(\frac{1}{2},\va)=0$ hold for all $\va>0$, substituting $x={1}/{2}$ into \eqref{Seq:1103-6}, we obtain 
\begin{align}\label{Seq:1103-8}
2y'\Big(\frac{1}{2};\va\Big)\cdot\xi'\Big(\frac{1}{2},\va\Big)
=-2\Big( \va\vc'(\va)+\vc(\va) \Big).
\end{align}
% We imitate Serrin-Tang (2000) style !!
To continue, we distinguish two subcases:

\smallskip

(i) 
{\sl We consider $\va\in\Gamma_{+}$ satisfying $\va\vc'(\va)+\vc(\va) \leq0$. }
Then such $\va$ satisfies 
\begin{align*}
\xi'\Big(\frac{1}{2},\va\Big)\leq 0,
\end{align*}
where we used \eqref{Seq:1103-8} and $y'(\frac{1}{2};\va)<0$. 
Here, combining straightforward calculations with \eqref{Sy_a} and \eqref{Seq:1103-1}, we obtain 
\begin{align}\label{Seq:1104-1}
\Big(\xi'y'-\xi\vc(\va)f(y) \Big)' =\vc'(\va)f(y)y' \quad \text{in}\quad (0,\tfrac{1}{2})
\end{align}
for each $\va>0$.
Assume that 
\[\xi (x_0,\bar{\va}) <0 \quad \text{for some} \quad 0< x_0 <\frac{1}{2} \]
holds for some $\bar{\va}\in\Gamma_{+}$ with $\bar{\va}\vc'(\bar{\va})+\vc(\bar{\va}) \leq0$.
This together with $\xi(\frac{1}{2},\bar{\va})=0$ implies that there exists $x_1\in(0,\frac{1}{2})$ satisfying $\xi(x_1,\bar{\va})<0$ and
$\xi'(x_1,\bar{\va})=0$.
Then integrating \eqref{Seq:1104-1} on $(x_1,\frac{1}{2})$, we have 
\[
\xi'\Big(\frac{1}{2},\bar{\va}\Big) y'\Big(\frac{1}{2};\bar{\va}\Big) 
+\xi(x_1,\bar{\va})\vc(\bar{\va})f(y(x_1;\bar{\va}))  =\vc'(\bar{\va})\int_{x_1}^{\frac{1}{2}}f(y)y'\dx.
\]
However, the left-hand side takes a non-negative value while the right-hand side is negative,
which is impossible.
Hence it holds that 
\[ \xi(x,\va)\geq0\quad\text{for}\quad x\in\Big(0,\frac{1}{2}\Big)\]
and for $\va\in\Gamma_{+}$ with $\va\vc'(\va)+\vc(\va)\leq0$. 
Similar to \eqref{Seq:4.35}, we have \eqref{Seq:4.34}.

\smallskip

(ii)
{\sl We assume that $\va\in\Gamma_{+}$ satisfies $\va\vc'(\va)+\vc(\va) >0$. }
The assumption implies that $(\va\vc(\va))'>0$.
Let $I(\va)$ and $J(\va)$ be given by \eqref{Seq:1104-3}. 
Then by the fact that
\[
\va\vc(\va)= -2\left(\sqrt{\va}\int_0^{\va}\frac{1}{\sqrt{\va-t}}\frac{\!\dt}{(1+t^2)^{\frac{5}{4}}}\right)^2 
= -2I(\va)^2,
\]
$\va\vc'(\va)+\vc(\va) >0$ gives $I'(\va)<0$.
Since $u(\frac{1}{2};\va)=J(\va)/2I(\va)$,  we have  
\begin{align*}%\label{Seq:1104-2}
\frac{d }{d \va} \left[u\Big(\frac{1}{2};\va\Big)\right] =
\frac{J'(\va)I(\va)-J(\va)I'(\va)}{2I(\va)^2}
>\frac{J'(\va)I(\va)}{2I(\va)^2},
\end{align*}
where in the last inequality we used $I'(\va)<0$ and $J(\va)>0$. 
Combining this with Lemma~\ref{Slem:4.3}, we obtain \eqref{Seq:4.34}.
We complete the proof. 
\end{proof}

%%%%%%%%%%%%%%%%%%%%%%%%%%%%%%%%%%%%%%
\begin{proposition} \label{Sprop:4.4} 
Let $\va>0$ and $u(x;\va)$ be the solution of \eqref{Seq:1.1} with \eqref{Su_*}. 
Then
\begin{align}\label{Seq:4.26}
u\Big(\frac{1}{2};\va\Big) \to %\frac{1}{2}\cdot\frac{\mathcal{B}(1/4,0)}{\mathcal{B}(1/2,3/4)}=
c_* \quad \text{as}\quad \va\to\infty,
\end{align}
where $c_*$ is the constant given by \eqref{Seq:c_*}.
\end{proposition}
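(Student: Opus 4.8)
The plan is to start from the explicit formula $u(\tfrac12;\va)=J(\va)/(2I(\va))$ established via Lemma~\ref{Slem:4.2}, where
\[
I(\va)=\int_0^{\va}\frac{\sqrt{\va}}{\sqrt{\va-s}}\frac{\ds}{(1+s^2)^{\frac{5}{4}}},
\qquad
J(\va)=\int_0^{\va}\frac{\sqrt{\va}}{\sqrt{\va-s}}\frac{s}{(1+s^2)^{\frac{5}{4}}}\dx,
\]
and to show that $I(\va)\to c_0/2$ and $J(\va)\to 1$ as $\va\to\infty$, so that the quotient tends to $1/c_0=c_*/2\cdot(1/\ldots)$; more precisely $u(\tfrac12;\va)\to 1/c_0=c_*/2$? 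Let me re-examine: $c_*=2/c_0$, so $1/c_0=c_*/2$, and indeed $J/(2I)\to 1/(2\cdot c_0/2)=1/c_0=c_*/2$. That does not match; rather we must get $J/(2I)\to c_*$, so the target limits should be $I(\va)\to c_0/2$ and $J(\va)\to c_0 c_*/1=2$, i.e. $J(\va)\to 2$. So the claim reduces to
\[
\lim_{\va\to\infty} I(\va)=\frac{c_0}{2},
\qquad
\lim_{\va\to\infty} J(\va)=\frac{c_0 c_*}{1}\cdot\frac12\cdot 2 = 2 ,
\]
equivalently $\lim J(\va)=2\cdot\lim I(\va)\cdot c_* $; the cleanest route is to evaluate both limits directly.

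For $I(\va)$, substitute $s=\va\sigma$ to get $I(\va)=\int_0^1 \sigma^{-?}\ldots$; more convenient is $s=\va t$, giving
\[
I(\va)=\int_0^1 \frac{1}{\sqrt{1-t}}\,\frac{\va\,\dt}{(1+\va^2 t^2)^{\frac54}} .
\]
As $\va\to\infty$ the mass concentrates near $t=0$; scaling $t=\tau/\va$ yields $I(\va)=\int_0^{\va}\frac{1}{\sqrt{1-\tau/\va}}\frac{\dt\tau}{(1+\tau^2)^{5/4}}$, and by dominated convergence (the integrand is bounded by $C(1+\tau^2)^{-5/4}$ once $\va\ge 2$ on $\tau\le\va/2$, with a separate easy estimate of the tail $\tau\in[\va/2,\va]$ using $\sqrt{1-\tau/\va}\ge$ nothing — so one splits the integral) the limit is $\int_0^{\infty}(1+\tau^2)^{-5/4}\dt\tau = c_0/2$. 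The analogous scaling for $J(\va)$: writing $J(\va)=\int_0^{\va}\frac{1}{\sqrt{1-\tau/\va}}\frac{\tau\,\dt\tau}{(1+\tau^2)^{5/4}}$, the factor $\tau/(1+\tau^2)^{5/4}\sim \tau^{-3/2}$ is still integrable at $\infty$ but the weight $1/\sqrt{1-\tau/\va}$ is now singular precisely where the integrand has slow decay, so the dominant contribution comes from $\tau$ near $\va$; here substitute $\tau=\va(1-r)$, and the integrand near $r=0$ behaves like $\va^{-3/2}\cdot \va (1+\va^2)^{-5/4}r^{-1/2}\cdot\ldots$ — this needs care.

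The main obstacle, as just indicated, is the $J(\va)$ limit: both endpoints of the integral contribute in the limit in a more delicate way than for $I(\va)$, because the algebraic decay $\tau^{-3/2}$ of the integrand competes with the endpoint singularity $(1-\tau/\va)^{-1/2}$. I would handle it by splitting $J(\va)=\int_0^{\va/2}+\int_{\va/2}^{\va}$: on $[0,\va/2]$ the factor $(1-\tau/\va)^{-1/2}\le\sqrt2$ and dominated convergence gives $\int_0^{\infty}\tau(1+\tau^2)^{-5/4}\dt\tau=2$ (this integral is elementary: antiderivative $-2(1+\tau^2)^{-1/4}$); on $[\va/2,\va]$ one bounds the integrand by $C\va^{-3/2}(1-\tau/\va)^{-1/2}$ and integrates to get $O(\va^{-1})$, which vanishes. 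Hence $J(\va)\to 2$, $I(\va)\to c_0/2$, and
\[
u\Big(\tfrac12;\va\Big)=\frac{J(\va)}{2I(\va)}\longrightarrow \frac{2}{2\cdot c_0/2}=\frac{2}{c_0}=c_* ,
\]
which is \eqref{Seq:4.26}. An alternative, perhaps slicker, route avoiding the endpoint analysis is to combine $J(\va)=\int_0^1 \frac{t}{\sqrt{1-t}}\frac{\va^2\dt}{(1+\va^2t^2)^{5/4}}$ from \eqref{Seq:1208-1} with the hypergeometric identity \eqref{Seq:2205-1}: since $j(\va)=\va^2/(1+\va^2)\to1$, one needs the behavior of ${}_2F_1[1,\tfrac32;\tfrac74;z]$ as $z\uparrow 1$, which by the Gauss evaluation (or the connection formula, as $c-a-b=\tfrac74-1-\tfrac32<0$) blows up like $C(1-z)^{-3/4}$; tracking constants gives $\frac23 j(\va){}_2F_1[\cdots]\to 2$. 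I expect the direct dominated-convergence argument to be cleaner to write, so that is the one I would present, relegating the hypergeometric computation to a remark.
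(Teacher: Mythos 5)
Your proposal is correct and follows essentially the same route as the paper: both start from the identity $u(\tfrac12;\va)=J(\va)/(2I(\va))$ of Lemma~\ref{Slem:4.2}, split each integral near the endpoint singularity $s=\va$ (the paper at $\va^{3/4}$, you at $\va/2$ --- an immaterial choice), apply dominated convergence on the bulk to get $I(\va)\to c_0/2$ and $J(\va)\to 2$, and bound the tail crudely so that it vanishes. The only blemishes are cosmetic: the muddled bookkeeping in your opening paragraph before you settle on the correct targets, and the tail estimate for $J$ which is $O(\va^{-1/2})$ rather than $O(\va^{-1})$, though it still tends to zero.
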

%%%%%%%%%%%%%%%%%%%%%%%%%%%%%%%%%%%%%%
\begin{proof}
Let $\va\gg1$ and $I(\va)$ be given by \eqref{Seq:1104-3}. 
Set
\begin{align*}
I(\va)&= \int_0^{\va^{{3}/{4}}}\frac{1}{\sqrt{1-s/\va}}\frac{1}{(1+s^2)^{\frac{5}{4}}}\ds 
+ \int_{\va^{{3}/{4}}}^{\va} \frac{1}{\sqrt{1-s/\va}}\frac{1}{(1+s^2)^{\frac{5}{4}}}\ds \\
& =:I_1(\va) + I_2(\va).
\end{align*}
Since $(1+s^2)^{\frac{5}{4}}$ is integrable on $(0,\infty)$, we obtain
\[ I_1(\va) =\int_0^{\infty}\chi_{(0,\va^{{3}/{4}})} \frac{1}{\sqrt{1-s/\va}}\frac{1}{(1+s^2)^{\frac{5}{4}}} \ds\to \int_0^{\infty}\frac{1}{(1+s^2)^{\frac{5}{4}}}\ds \quad\text{as}\quad \va\to\infty.
\]
On the other hand, we observe that 
\begin{align*} 
|I_2(\va)|&\leq \frac{1}{(1+\va^{\frac{3}{2}})^{\frac{5}{4}}}\int_{\va^{{3}/{4}}}^{\va} \frac{1}{\sqrt{1-s/\va}}\ds 
=\frac{1}{(1+\va^{\frac{3}{2}})^{\frac{5}{4}}}\cdot2\va\sqrt{\va-\va^{\frac{3}{4}}}
\to 0
\end{align*}
as $\va\to\infty$. 
Therefore we obtain 
\begin{align} \label{Seq:4.30}
I(\va) \to \int_0^{\infty}\frac{1}{(1+s^2)^{\frac{5}{4}}}\ds=\frac{c_0}{2} \quad \text{as}\quad \va\to\infty.
\end{align}
By the same argument we have 
\begin{align} \label{Seq:4.33}
J(\va) \to \int_0^{\infty}\frac{x}{(1+x^2)^{\frac{5}{4}}}\dx =2
\quad\text{as}\quad\va\to\infty. 
\end{align}
Combining \eqref{Seq:4.31} with \eqref{Seq:4.30} and \eqref{Seq:4.33}, we obtain \eqref{Seq:4.26}.
\end{proof}

In fact, we can obtain another characterization of the limit of $u(x;\va)$ (see Appendix~\ref{Ssubsec:4.3}).

%%%%%%%%%%%%%%%%%%%%%%%%%%%%%%%%%%%%%%
\begin{remark}
Even if $u(x;\va)$ exists in $x\geq1$, $u$ never satisfies $u(1;\va)=0$ for any $\va>0$
since the solution of \eqref{Seq:1.1} with $u(0)=u''(0)=u(1)=u'(\frac{1}{2})=0$ is limited to $u\equiv0$.
Therefore $u(x;\va)$ is far different from the function obtained in \cite{DG_07}. 
For the comparison, see Figure~\ref{Sfig:3}:
one is $u(x;\frac{1}{2})$; the other is the solution of \eqref{Seq:1.1} with $u(0)=u(1)=0$ and $u'(0)=\frac{1}{2}$, and both of them satisfy $u'(0)=\frac{1}{2}$.
\end{remark}
%%%%%%%%%%%%%%%%%%%%%%%%%%%%%%%%%%%%%%

%\vspace{-0.7cm}

\begin{figure}[h]
\centering
\includegraphics[width=4.2cm]{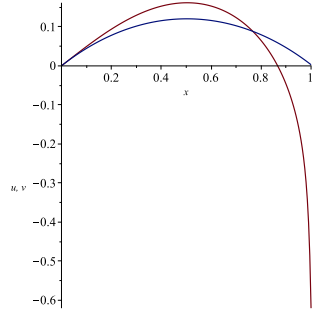}
%\vspace{1cm}
\caption{$u(x;0.5)$ has a singularity.\label{Sfig:3}}
\end{figure}

%%%%%%%%%%%%%%%%%%%%%%%%%%%%%%%%%%%%%%%%%%%%%%%%%%%%%%%%%%%%%%%%%%%%%%%%%%%%

\subsection{Proof of Theorem~\ref{Sthm:1.2}}\label{Ssubsec:4.2}

In this subsection, we turn to problem \eqref{Seq:P}. 
The results in Subsection~\ref{Ssubsec:4.2} also hold for \eqref{Seq:M} since the same argument is applicable.

%%%%%%%%%%%%%%%%%%%%%%%%%%%%%%%%%%%%%%
\begin{proof}[Proof of Theorem~\ref{Sthm:1.2}]
% We divide the proof  into three steps.
%\smallskip
%\noindent
%\textbf{Step 1}. 
\textsl{We first show uniqueness and existence.}
Assume that $\psi(\frac{1}{2}) < c_*$.
Then it follows from Proposition~\ref{Sprop:2.1} that \eqref{Seq:M} has a solution. 
As mentioned earlier, solutions of \eqref{Seq:M} also solve \eqref{Seq:P}.
Namely, a minimizer of $\W$ in $\Msym$ exists and it is a solution of \eqref{Seq:P}. 

We show the uniqueness of solutions of \eqref{Seq:P}.
By the argument in Section~\ref{Ssection:3}, every solution $u$ of \eqref{Seq:P} satisfies \eqref{SBVP} on $[0,\frac{1}{2}]$. 
For $H:=\psi(\frac{1}{2})\in(0,c_*)$, there exists $\tilde{\va}=\tilde{\va}(H)>0$ such that 
\[
u\Big(\frac{1}{2};\tilde{\va}\Big)=H.
\]
We infer from Proposition~\ref{Sprop:4.3} that such $\tilde{\va}$ is unique, so we obtain the conclusion.

\smallskip

%\noindent
%\textbf{Step 2}.
\textsl{Next we show non-existence if $\psi(\tfrac{1}{2})\geq c_*$.}
Suppose that \eqref{Seq:P} has a solution $u$ under the assumption $\psi(\frac{1}{2}) \geq c_*$.
Then by the argument in Section~\ref{Ssection:3}, $u$ satisfies \eqref{SBVP} on $[0,\frac{1}{2}]$.
However, Proposition~\ref{Sprop:4.4} implies that  $u(\frac{1}{2};\va)$ cannot reach $c_*$ for any $\va:=u'(0)$, 
which contradicts our assumption. 
Therefore if $\psi(\frac{1}{2})\geq c_*$, \eqref{Seq:P} does not have a solution.

\smallskip

%\noindent
%\textbf{Step 3}. 
\textsl{We discuss the regularity.}
We assume that $\psi(\frac{1}{2}) < c_*$ and then \eqref{Seq:P} has a unique solution $u$.
Set $u'(0)=:\va>0$. 
Then $u$ satisfies $u|_{[0,1/2]}=u(\cdot;\va)|_{[0,1/2]}$, where $u(\cdot;\va)$ is the solution of \eqref{Seq:1.1} with \eqref{Su_*}.
We infer from \eqref{Seq:4.04} that
\[
\lim_{x \nearrow \frac{1}{2} } u'''(x;\va) = \frac{\vb_*(\va)}{(1+\va^2)^{\frac{5}{4}}}<0 
%\quad \text{for any}\quad\va>0
.
\]
However, if $u\in C^3(0,1)$, by symmetry $u'''(\tfrac{1}{2})=0$ must hold. 
This contradicts our assumption. 
\end{proof}

%%%%%%%%%%%%%%%%%%%%%%%%%%%%%%%%%%%%%%
\begin{remark}
We refer to \cite{CF_79} as an example of the loss of regularity induced by obstacle. 
They considered a linear fourth order obstacle problem: 
\begin{align}\label{SYeq:1027-2}
\text{find}\quad u\in 
\mathcal{A}
\quad \text{such that}\quad \int_{\Omega}\Delta u\Delta(v-u)\dx\geq0 \quad \text{for} \quad v\in
\mathcal{A},
\end{align}
where  $\Omega\subset\R^N$ is a bounded domain and 
\[ 
\mathcal{A}
:=\Set{v\in H^2_0(\Omega) | v\geq\psi \ \ \text{a.e.\ in}\ \ \Omega }.\]
It is shown that there cannot exist an a priori $W^{3,p}(K)$ estimate on the solution of \eqref{SYeq:1027-2} for $p>n$ and for any compact subdomain $K\subset\subset\Omega$  (see \cite[Section 7]{CF_79}). 
\end{remark}
%%%%%%%%%%%%%%%%%%%%%%%%%%%%%%%%%%%%%%

%%%%%%%%%%%%%%%%%%%%%%%%%%%%%%%%%%%%%%%%%%%%%%%%%%%%%%%%%%%%%%%%%%%%%%%%%%%%%%%%%%%%%
%%%%%%%%%%%%%%%%%%%%%%%%%%%%%%%%%%%%%%%%%%%%%%%%%%%%%%%%%%%%%%%%%%%%%%%%%%%%%%%%%%%%%
%%%%%%%%%%%%%%%%%%%%%%%%%%%%%%%%%%%%%%%%%%%%%%%%%%%%%%%%%%%%%%%%%%%%%%%%%%%%%%%%%%%%%

\section{Application to the parabolic problem}\label{Ssection:5}

Dynamical approaches are also useful to study variational problems. 
In this section we show that the solution of \eqref{Seq:M} obtained in Theorem~\ref{Sthm:1.1} can be characterized as  equilibrium of the corresponding parabolic problem: 
%%%%%%%%%%%%%%%%%%%%%%%%%%%%%%%%%%
\begin{align} \label{Seq:GF} \tag{GF}
\begin{cases}
\partial_{t}u + \nabla\mathcal{W}(u) \geq 0           & \quad \text{in} \quad (0,1)\times(0,T),\\
\partial_{t}u + \nabla\mathcal{W}(u)  = 0 & \quad \text{in} \quad \{ (x,t)\in(0,1)\times(0,T)\ |\ u>\psi\},\\
u \geq \psi                                    & \quad \text{in} \quad (0,1)\times(0,T),\\ 
u = u '' = 0  & \quad \text{on} \quad \{0,1\}\times(0,T),\\
u(\cdot,0)=u_{0}(\cdot)                              & \quad \text{in} \quad (0,1), 
\end{cases}
\end{align}
%%%%%%%%%%%%%%%%%%%%%%%%%%%%%%%%%%%
where $\nabla\W(u)$ is the Euler--Lagrange operator of $\W(u)$, i.e.,
\begin{align*}
\int_0^1\nabla\W(u)\cdot\vp\dx&:=\frac{d}{d\ve} \W(u+\ve\vp)\big|_{\ve=0}\\
&\ =\int_0^1 \bigg[\Big(2\frac{u''}{(1+|u'|^2)^{\frac{5}{2}}}\Big)''+5\Big(\frac{|u''|^2u'}{(1+|u'|^2)^{\frac{7}{2}}}\Big)'\bigg]\vp\dx
\end{align*}
for $\vp\in C^{\infty}_{\rm c}(0,1)$.
If there is no obstacle, it is a standard matter to obtain global solvability and asymptotic stability for the $L^2$-gradient flow with initial data sufficiently close to stable equilibrium (see e.g.\ \cite{DG_09, NO_2017}).
However, since the solution of \eqref{Seq:M} and the solution of \eqref{Seq:GF} are not in general regular, 
the standard argument cannot work. 
In particular, \eqref{Seq:1.2} implies that
\textit{the solution of \eqref{Seq:GF} never converges to the solution of \eqref{Seq:M} in the $C^{\infty}$-topology}. 
A new ingredient for the following results is that 
\textit{we obtain global existence and asymptotic behavior in the full limit sense while the solution of \eqref{Seq:M} does not belong to $C^3(0,1)$}.

Let $u_0$ satisfy 
\begin{align}\label{Seq:5.1}
u_0\in H(0,1), \quad u_0(x)\geq \psi(x) \quad \text{for}\quad x\in [0,1].
\end{align}
For $T>0$, we define the convex set $\K_{T}$ by 
\begin{align*}
\K_T := \Set{ v \in L^{\infty}(0,T;H(0,1))\cap H^1(0,T;L^2(0,1)) | 
\begin{array}{l}
v \geq \psi \,\,\, \text{in} \,\,\, (0,1) \times [0,T], \\
v|_{t=0} = u_{0} \,\,\, \text{in}\,\,(0,1)
\end{array}
},
\end{align*}
where $H(0,1)$ is the Hilbert space $H(0,1)=H^2(0,1)\cap H^1_0(0,1)$ equipped with the scalar product
$$
(u,v)_{H(0,1)} := \int_0^1 u''v'' \, dx \quad \text{for}\quad u, v \in H(0,1). 
$$
In this paper we employ the norm $\|\cdot\|_{H(0,1)}$ on $H(0,1)$ as
$$
\|u\|_{H(0,1)} := (u,u)_{H(0,1)}^{1/2} \quad\text{for}\quad u \in H(0,1), 
$$
which is equivalent to $\|\cdot\|_{H^2(0,1)}$.
In fact, there exist $c_H,C_H>0$ such that 
\begin{align*}%\label{SYeq:1027-3}
c_H\|u\|_{H(0,1)}\leq \|u\|_{H^2(0,1)} \leq C_H\|u\|_{H(0,1)}
\end{align*}
(see e.g.\  \cite[Theorem 2.31]{GGS_2010}).

We formulate the definition of solutions to \eqref{Seq:GF} as follows.

%%%%%%%%%%%%%%%%%%%%%%%%%%%%%%%%%%%%%%%%%%%%%%%%%%%%%%%%%%
\begin{definition}
We say that $u$ is a weak solution to \eqref{Seq:GF} in $(0,1) \times [0,T]$ if the following hold$\colon$  
\begin{enumerate}
\item[{\rm (i)}] $u \in \K_T;$  
\item[{\rm (ii)}]  For any $v\in\K_T$ it holds that
\begin{align} \label{Seq:1204-1}
\int_0^T\!\!\int_0^1\bigg[ \pd_{t} u (v-u) &+2\frac{u''(v-u)''}{(1+(u')^2)^{\frac{5}{2}}} -5 \frac{|u''|^2 u'(v-u)'} {(1+(u')^2)^{\frac{7}{2}}} \bigg]\dx\!\dt \geq 0.
\end{align}
\end{enumerate}
\end{definition}
%%%%%%%%%%%%%%%%%%%%%%%%%%%%%%%%%%%%%%%%%%%%%%%%%%%%%%%%%%

We are now ready to state a local-in-time existence and uniqueness result proved in \cite{OY_2019}.

%%%%%%%%%%%%%%%%%%%%%%%%%%%%%%%%%%%%%%
\begin{proposition}[Local-in-time existence and uniqueness, \cite{OY_2019}] \label{Sthm:5.1} 
Let $\psi$ satisfy 
\begin{align}\label{Seq:1216-1} 
\psi\in C([0,1]), \quad \psi(0)<0, \quad \psi(1)<0
\end{align}
and $u_0$ satisfy \eqref{Seq:5.1}. 
Then there exists a constant $T=T(u_0)>0$ 
such that \eqref{Seq:GF} has a unique weak solution in $(0,1)\times[0,T]$.
Moreover, $u(\cdot,t) \in H^2(0,1)$ for all $0\leq t \leq T$ and it holds that 
\begin{align}\label{Seq:5.02}
\W(u(t_2))-\W(u(t_1)) \leq -\frac{1}{2}\int_{t_1}^{t_2}\!\!\int_0^1 |\pd_t u|^2\dx\!\dt
\quad \text{for all} \quad 0\leq t_1 \leq t_2 \leq T.
\end{align}
\end{proposition}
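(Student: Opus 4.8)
The final statement to be proved is Proposition~\ref{Sthm:5.1}, the local-in-time existence and uniqueness of weak solutions to \eqref{Seq:GF}. Since the paper attributes this result to \cite{OY_2019}, the plan is to reconstruct the proof along the lines of the standard minimizing-movements / implicit time-discretization scheme for gradient flows of a (non-convex) functional over a convex constraint set, combined with the energy structure provided by $\W$.

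The plan is to proceed as follows. First, I would fix a time step $h>0$ and define iteratively $u^{k+1}$ as a minimizer of the functional $v\mapsto \W(v)+\frac{1}{2h}\|v-u^{k}\|_{L^2(0,1)}^2$ over the convex set $\{v\in H(0,1):v\geq\psi\}$; here the $L^2$-penalization restores enough coercivity and weak lower semicontinuity (via $H^2$-boundedness of sublevel sets of $\W$ together with compact embedding $H^2\hookrightarrow C^1$) to get existence of each $u^{k+1}$ by the direct method, even though $\W$ itself is not convex. The Euler--Lagrange inequality for this minimization problem gives the discrete analogue of \eqref{Seq:1204-1}. Summing the resulting inequalities and using that $u^{k+1}$ is competitor-beaten by $u^k$ yields the discrete energy estimate $\W(u^{k+1})+\frac{1}{2h}\|u^{k+1}-u^k\|_{L^2}^2\leq\W(u^k)$, which after summation controls $\sum_k\|u^{k+1}-u^k\|_{L^2}^2/h$, i.e.\ a bound on the time-difference quotients in $L^2((0,T);L^2(0,1))$. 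Because the obstacle forces $\W(u_0)<\infty$ is finite and $\W\geq0$, one also obtains a uniform bound $\sup_k\W(u^k)\leq\W(u_0)$, hence a uniform $H^2(0,1)$-bound on the piecewise-constant and piecewise-linear interpolants $\bar u_h$, $\hat u_h$.

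Next I would pass to the limit $h\to0$. The uniform bounds give, up to a subsequence, $\hat u_h\to u$ weakly in $H^1(0,T;L^2(0,1))$ and weakly-$*$ in $L^\infty(0,T;H(0,1))$, with $\hat u_h-\bar u_h\to0$ strongly in $L^2$, and by Aubin--Lions one upgrades this to strong convergence of $\bar u_h$ in, say, $C([0,T];C^1([0,1]))$, so the pointwise obstacle constraint $u\geq\psi$ and the initial condition survive, giving $u\in\K_T$. Strong $C^1$-convergence is exactly what is needed to pass to the limit in the nonlinear coefficients $(1+(u')^2)^{-5/2}$ etc.; the only term requiring care is the highest-order term $2u''(v-u)''/(1+(u')^2)^{5/2}$, where one writes $u''(v-u)''=u''v''-|u''|^2+ \text{(terms handled by lower semicontinuity)}$ and uses weak convergence of $u''$ in $L^2$ together with the lower semicontinuity of $v\mapsto\int |v''|^2(1+(v')^2)^{-5/2}$ applied to $-\int|u''|^2(\cdots)$, which enters with the favorable sign after testing against $v-u$. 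This convexity-in-the-top-derivative trick (the integrand is convex in $u''$) is the standard way to handle the non-convexity of $\W$ in the passage to the limit of a variational inequality, and yields \eqref{Seq:1204-1}. The energy inequality \eqref{Seq:5.02} follows from the discrete energy estimate by lower semicontinuity of $\W$ along the weak limit and weak lower semicontinuity of the $L^2$-norm of the difference quotients. Uniqueness (and hence that $T$ depends only on $u_0$) is proved by a Gronwall argument: for two solutions $u_1,u_2$ one tests \eqref{Seq:1204-1} for $u_1$ with $v=u_2$ and vice versa, adds, and estimates the difference of the nonlinear terms by $C(\|u_1''\|_{L^2},\|u_2''\|_{L^2})\|u_1-u_2\|_{H(0,1)}^2$ using Lipschitz bounds on the coefficients on the (bounded in $C^1$) set where the solutions live, absorbing the top-order term via the $L^2$-in-time $H^2$ control and closing with $\frac{d}{dt}\|u_1-u_2\|_{L^2}^2\leq C\|u_1-u_2\|_{L^2}^2$ on a possibly shorter interval; the smallness of $T=T(u_0)$ is what makes the nonlinear estimates close.

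The main obstacle is the passage to the limit in the variational inequality in the presence of the non-convex, quasilinear fourth-order principal part: one cannot simply pass weakly in $\int u_h'' (v-u_h)''(\cdots)$ because the test function itself involves $u_h$. The resolution — exploiting that the integrand is convex in the second derivative so that testing with $v-u$ produces a term with the right sign for lower semicontinuity, while all lower-order nonlinearities converge strongly thanks to the compactness $H^2\hookrightarrow\hookrightarrow C^1$ — is the technical heart of the argument and the step I would expect to write out most carefully. A secondary difficulty is that $T$ must be chosen depending on $u_0$ (equivalently on $\W(u_0)$) to make the Gronwall/uniqueness estimate and the fixed-point-type closure work, since no global-in-time bound is claimed here; but once the a priori $H^2$-bound $\W(u_h)\leq\W(u_0)$ and the $L^2$ difference-quotient bound are in hand, the selection of $T$ is routine bookkeeping.
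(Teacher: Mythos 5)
The paper does not prove Proposition~\ref{Sthm:5.1} at all: it is quoted verbatim from \cite{OY_2019}, so there is no internal proof to compare against. Your reconstruction via an implicit time discretization (minimizing movements) is the natural and, as far as the cited reference goes, essentially the intended route, and your treatment of the limit passage in the variational inequality (strong $C^1$ convergence for the coefficients, weak convergence plus convexity in $u''$ for the principal part, which enters with the favorable sign) and of uniqueness by a Gronwall argument is sound in outline.

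There is, however, one genuine gap: you claim that the discrete energy estimate $\sup_k\W(u^k)\le\W(u_0)$ yields a uniform $H^2(0,1)$ bound on the interpolants, and likewise that sublevel sets of $\W$ are $H^2$-bounded so that the direct method applies at each time step. This is false in general, because the energy density $(u'')^2(1+(u')^2)^{-5/2}$ degenerates as $|u'|\to\infty$: a bound on $\W(v)$ controls $\|v''\|_{L^2}$ only in the presence of a bound on $\|v'\|_{L^\infty}$, and the Deckelnick--Grunau estimate $|G(v'(x))-G(v'(y))|\le\sqrt{|x-y|}\,\W(v)^{1/2}$ converts an energy bound into a gradient bound only when the energy is below a threshold (this is exactly the role of the hypothesis $\W(u_0)<c_0^2$ in Lemma~\ref{Slem:5.2} and Remark~\ref{Srem:5.2}, which is \emph{not} assumed in Proposition~\ref{Sthm:5.1}). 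Closing this gap is precisely where the local-in-time restriction $T=T(u_0)$ enters: one must propagate a bound of the form $\|u'(\cdot,t)\|_{L^\infty}\le M$ (say $M$ depending on $\|u_0'\|_{L^\infty}$) over a short time interval by a continuity or bootstrap argument, and only then do the energy bounds become genuine $H^2$ bounds; compare the explicit dependence $L(u_0)=c_H(1+M^2)^{5/2}\W(u_0)^{1/2}$ in Remark~\ref{Srem:5.1}. Your sketch attributes the smallness of $T$ solely to the Gronwall/uniqueness step, but it is already indispensable for the compactness and for the well-posedness of each minimization step.
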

%%%%%%%%%%%%%%%%%%%%%%%%%%%%%%%%%%%%%%

%%%%%%%%%%%%%%%%%%%%%%%%%%%%%%%%%%%%%%
\begin{remark}\label{Srem:5.1}
Let $u$ be the weak solution $u$ to \eqref{Seq:GF} in $(0,1)\times[0,T]$.
If there exists $M>0$ such that
\[
\sup_{0\leq t \leq T}\|u'(\cdot, t)\|_{L^{\infty}(0,1)} \leq M,  
\]
then $u$ can be uniquely extended to the solution in $(0,1)\times[0,T+L(u_0)]$, where
\begin{align*}
0< L(u_0):= c_H\left(1+M^2\right)^{\frac{5}{2}} \W(u_0)^{\frac{1}{2}}. 
\end{align*}
This extension is justified by solving \eqref{Seq:GF} with the initial datum $u(T)$
(more precisely, see \cite[proof of Theorem 1.1]{OY_2019}). 
%We can prove it by solving \eqref{Seq:GF} with the initial datum $u(T)$ and using the estimate \eqref{Seq:5.02}. 
Namely, it is important to deduce a uniform estimate for $u$ on $\dot{W}^{1,\infty}(0,1)$ to extend the time of existence of the solution.
\end{remark}
%%%%%%%%%%%%%%%%%%%%%%%%%%%%%%%%%%%%%%

%%%%%%%%%%%%%%%%%%%%%%%%%%%%%%%%%%%%%%

In order to discuss the asymptotic behavior of the solution $u$ of \eqref{Seq:GF}, we prepare two lemmas. 

%%%%%%%%%%%%%%%%%%%%%%%%%%%%%%%%%%%%%%
\begin{lemma}[Preservation of symmetry] \label{Slem:5.1} 
Let $\psi$ be a symmetric function satisfying \eqref{Seq:1216-1} and $u$ be the solution of \eqref{Seq:GF} in $(0,1)\times[0,T]$ with the initial datum 
$
u_0 \in \Msym.
$ 
Then $u(\cdot, t)\in \Msym$ for $0\leq t \leq T$.  
\end{lemma}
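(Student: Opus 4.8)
\textbf{Proof proposal for Lemma~\ref{Slem:5.1}.}

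The plan is to exploit the uniqueness part of Proposition~\ref{Sthm:5.1} together with the symmetry of the problem data. First I would introduce the reflection operator $R$ acting on functions of $x\in[0,1]$ by $(Rf)(x):=f(1-x)$, and extend it to space-time functions by $(Rv)(x,t):=v(1-x,t)$. The key structural observation is that $\W$ and the elliptic part of \eqref{Seq:GF} are invariant under $R$: since $R$ is an isometry of $H(0,1)$ preserving the scalar product $(\cdot,\cdot)_{H(0,1)}$, and since $(Rv)'(x)=-v'(1-x)$, $(Rv)''(x)=v''(1-x)$, one checks directly that the integrand in \eqref{Seq:1204-1} is unchanged when $u$ is replaced by $Ru$ and $v$ by $Rv$ (the odd number of derivatives in the second term is matched by the two sign changes from $u'$ and $(v-u)'$). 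Likewise $\partial_t(Ru)\,(Rv-Ru)$ integrates to the same value after the change of variables $x\mapsto 1-x$. Since $\psi$ is symmetric, $Rv\geq\psi$ whenever $v\geq\psi$, and $R$ maps $\K_T$ onto $\K_T$ provided $Ru_0=u_0$, which holds because $u_0\in\Msym$.

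Next I would argue as follows. Let $u$ be the weak solution of \eqref{Seq:GF} on $(0,1)\times[0,T]$ with initial datum $u_0\in\Msym$. By the invariance just described, $Ru$ is also a weak solution of \eqref{Seq:GF} on $(0,1)\times[0,T]$: it lies in $\K_T$ (membership in $L^\infty(0,T;H(0,1))\cap H^1(0,T;L^2(0,1))$ is preserved by $R$, it satisfies $Ru\geq\psi$, and $(Ru)|_{t=0}=Ru_0=u_0$), and it satisfies the variational inequality \eqref{Seq:1204-1} for every test function, since every $v\in\K_T$ is of the form $v=R(Rv)$ with $Rv\in\K_T$. By the uniqueness statement in Proposition~\ref{Sthm:5.1}, we conclude $Ru=u$ as elements of $\K_T$, i.e.\ $u(x,t)=u(1-x,t)$ for a.e.\ $(x,t)$. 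Since $u(\cdot,t)\in H^2(0,1)\hookrightarrow C^1([0,1])$ for each $t$, this identity holds pointwise, so $u(\cdot,t)$ is symmetric; combined with $u(\cdot,t)\geq\psi$ and the boundary conditions built into $H(0,1)$, this gives $u(\cdot,t)\in\Msym$ for all $0\leq t\leq T$.

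The only delicate point — and the step I would expect to need the most care — is verifying that $Ru$ genuinely satisfies the variational inequality \eqref{Seq:1204-1} against \emph{all} admissible test functions rather than just a symmetric subclass, and that the change of variables $x\mapsto 1-x$ is applied consistently to every one of the three terms in the integrand (paying attention to the sign bookkeeping in the first-order term $-5|u''|^2u'(v-u)'/(1+(u')^2)^{7/2}$, where $(Ru)'$ and $(Rv-Ru)'$ each contribute a sign). Once this invariance is checked cleanly, the conclusion is immediate from uniqueness, and no further regularity or compactness arguments are required.
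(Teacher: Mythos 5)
Your proposal is correct and follows essentially the same route as the paper: the paper also shows that the reflection $\tilde{u}(x,t):=u(1-x,t)$ lies in $\K_T$ and satisfies \eqref{Seq:1204-1} against an arbitrary $v\in\K_T$ by testing the original inequality with $\tilde{v}(x,t):=v(1-x,t)$ and changing variables $x\mapsto 1-x$, then invokes the uniqueness in Proposition~\ref{Sthm:5.1} to conclude $\tilde{u}=u$. Your sign bookkeeping for the odd-derivative term and the observation that $v\mapsto Rv$ is a bijection of $\K_T$ are precisely the points the paper's computation encodes, so no gap remains.
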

%%%%%%%%%%%%%%%%%%%%%%%%%%%%%%%%%%%%%%
\begin{proof}
Let $u$ be a weak solution of \eqref{Seq:GF} in $(0,1)\times[0,T]$ and $v\in \K_T$ be arbitrary.
Setting $\tilde{u}(x,t):=u(1-x,t)$ and $\tilde{v}(x,t):=v(1-x,t)$, we find that $\tilde{u}$, $\tilde{v}\in\K_T$ by symmetry of $\psi$ and $u_0$. 
Then 
\begin{align*}
\int_0^T\!\!\int_0^1\bigg[ \pd_{t} \tilde{u}& (v-\tilde{u}) +2\frac{\tilde{u}''(v-\tilde{u})''}{(1+(\tilde{u}')^2)^{\frac{5}{2}}} -5 \frac{|\tilde{u}''|^2 \tilde{u}'(v-\tilde{u})'} {(1+(\tilde{u}')^2)^{\frac{7}{2}}} \bigg]\dx\!\dt \\
&= \int_0^T\!\!\int_1^0-\bigg[ \pd_{t} u (\tilde{v}-u) +2\frac{u''(\tilde{v}-u)''}{(1+(u')^2)^{\frac{5}{2}}} -5 \frac{|u''|^2 u'(\tilde{v}-u)'} {(1+(u')^2)^{\frac{7}{2}}} \bigg]\dx\!\dt\geq0,
\end{align*}
where in the last inequality we used \eqref{Seq:1204-1} taking $\tilde{v}$ as the test function.
It follows from the uniqueness of solutions of \eqref{Seq:GF} that $\tilde{u}=u$
and this implies $u(\cdot,t)\in\Msym$ for any $0\leq t \leq T$.
\end{proof}

%%%%%%%%%%%%%%%%%%%%%%%%%%%%%%%%%%%%%%
\begin{lemma} \label{Slem:5.2} 
Let $\psi$ satisfy \eqref{Seq:1216-1} and $u$ be the solution of \eqref{Seq:GF} in $(0,1)\times[0,T]$ with the initial datum $u_0 \in H(0,1)$ satisfying $\W(u_0)<c_0^2$. 
Then
\begin{align*}
\| u'(\cdot,t) \|_{L^{\infty}(0,1)}\leq G^{-1}\left( \frac{\sqrt{\W(u_0)}}{2} \right):=M^*(u_0) \quad \text{for all} \quad0\leq t\leq T.
\end{align*} 
\end{lemma}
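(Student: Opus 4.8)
The plan is to exploit the energy inequality from Proposition~\ref{Sthm:5.1} to control the $L^2$-norm of $u''(\cdot,t)$, and then to pass from this $H^2$-bound to the desired $\dot W^{1,\infty}$-bound via the one-dimensional embedding expressed through the function $G$. First I would observe that taking $t_1=0$ and $t_2=t$ in \eqref{Seq:5.02} gives $\W(u(t))\le\W(u_0)$ for every $0\le t\le T$, since the integral on the right-hand side is nonnegative. Thus the whole trajectory stays inside the sublevel set $\{\W\le\W(u_0)\}$, and in particular $\W(u(t))<c_0^2$ for all $t$ by the hypothesis $\W(u_0)<c_0^2$.

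Next I would translate the bound on $\W$ into a pointwise bound on $u'$. The key identity is that for a smooth function $v$ with $v(0)=0$ one has, writing $y=G(v')$ so that $y'=v''/(1+(v')^2)^{5/4}$,
\begin{align*}
|G(v'(x))| = \Big| \int_0^x \frac{v''(\sigma)}{(1+v'(\sigma)^2)^{\frac54}}\,d\sigma \Big|
\le \Big(\int_0^1 \frac{v''(\sigma)^2}{(1+v'(\sigma)^2)^{\frac52}}\,d\sigma\Big)^{\frac12}
= \frac{\sqrt{\W(v)}}{2}\cdot\sqrt{2}\cdot\frac{1}{\sqrt 2},
\end{align*}
where I apply Cauchy--Schwarz on $(0,x)\subset(0,1)$ and recognize the last integral as $\W(v)$ by the very definition of $\W$ in terms of $\vk_v$. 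So the clean statement to prove is $|G(v'(x))|\le \tfrac12\sqrt{\W(v)}$ for all $x\in[0,1]$. Applying this with $v=u(\cdot,t)$, using $u(0,t)=0$ (the boundary condition in \eqref{Seq:GF}) and the monotonicity $\W(u(t))\le\W(u_0)$, we get $|G(u'(x,t))|\le \tfrac12\sqrt{\W(u_0)}$. Since $\tfrac12\sqrt{\W(u_0)}<\tfrac{c_0}{2}$ lies in the range of $G$, we may invert: because $G$ is odd and strictly increasing, $|u'(x,t)|\le G^{-1}\big(\tfrac12\sqrt{\W(u_0)}\big)=M^*(u_0)$, which is exactly the claim.

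One point requiring a little care is that the weak solution $u(\cdot,t)$ lies only in $H^2(0,1)$, not necessarily $C^\infty$, so the integration-by-parts/FTC step should be justified by density or by noting that $H^2(0,1)\hookrightarrow C^1([0,1])$ so that $u'(\cdot,t)$ is continuous and the representation $G(u'(x,t))=\int_0^x (u''/(1+(u')^2)^{5/4})\,d\sigma$ holds as an absolutely continuous function identity. I expect this to be the only mild obstacle; everything else is a direct chain of Cauchy--Schwarz, the energy monotonicity \eqref{Seq:5.02}, and the elementary properties of $G$ recorded around \eqref{Sdef-G}. In particular, the hypothesis $\W(u_0)<c_0^2$ is used precisely to ensure $\tfrac12\sqrt{\W(u_0)}$ stays inside the open interval $(-\tfrac{c_0}{2},\tfrac{c_0}{2})$ on which $G^{-1}$ is defined, so that $M^*(u_0)$ makes sense.
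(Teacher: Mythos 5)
There is a genuine gap at the heart of your argument: the identity $G(v'(x))=\int_0^x v''(\sigma)\,(1+v'(\sigma)^2)^{-5/4}\,d\sigma$ does not follow from $v(0)=0$. The fundamental theorem of calculus gives $G(v'(x))-G(v'(0))=\int_0^x\cdots$, and the boundary condition $u(0,t)=0$ tells you nothing about $u'(0,t)$; for the solutions relevant here one in fact expects $u'(0,t)>0$, so the term $G(u'(0,t))$ cannot be dropped. Relatedly, your displayed chain ending in $\frac{\sqrt{\W(v)}}{2}\cdot\sqrt2\cdot\frac1{\sqrt2}$ equals $\tfrac12\sqrt{\W(v)}$, whereas the quantity it is claimed to equal, $\bigl(\int_0^1 v''^2(1+(v')^2)^{-5/2}\,d\sigma\bigr)^{1/2}$, is $\sqrt{\W(v)}$; so even granting the (false) FTC identity, Cauchy--Schwarz on a subinterval of $(0,1)$ only yields $|G(v'(x))|\le\sqrt{\W(v)}$. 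That weaker bound is not enough: under the hypothesis $\W(u_0)<c_0^2$ the number $\sqrt{\W(u_0)}$ may well exceed $c_0/2$ and then lies outside the range of $G$, so $G^{-1}$ cannot be applied. The factor $\tfrac12$ is exactly what the hypothesis $\W(u_0)<c_0^2$ is calibrated to.

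The missing ingredient is symmetry. The paper first invokes Lemma~\ref{Slem:5.1} to get $u(\cdot,t)\in\Msym$, hence $u'(1-x,t)=-u'(x,t)$ and, since $G$ is odd, $G(u'(1-x,t))=-G(u'(x,t))$. Writing $2|G(u'(x,t))|=|G(u'(x,t))-G(u'(1-x,t))|=\bigl|\int_x^{1-x}G'(u')\,u''\,d\sigma\bigr|$ and applying Cauchy--Schwarz over $(x,1-x)$, whose length $1-2x$ is at most $1$, gives $2|G(u'(x,t))|\le\sqrt{\W(u(t))}$ --- this is where the crucial factor $\tfrac12$ comes from. Combined with the energy monotonicity \eqref{Seq:5.02}, which you do use correctly, this yields the claim. (Strictly speaking the lemma as stated does not list symmetry of $\psi$ and $u_0$ among its hypotheses, but it is applied in Theorem~\ref{Sthm:5.2} with $\psi\in\SC$ and $u_0\in\Msym$, and the proof relies on it.) Your closing remark about justifying the absolutely continuous representation for $H^2$ functions is fine, but that is not where the difficulty lies.
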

%%%%%%%%%%%%%%%%%%%%%%%%%%%%%%%%%%%%%%
\begin{proof}
By Lemma~\ref{Slem:5.1}, $u(\cdot,t)\in\Msym$ holds for $0\leq t \leq T$.
Fix $x\in[0,\frac{1}{2})$ and $t\in[0,T]$. 
Then it follows from the definition of $G$ that 
\begin{align} \label{Seq:2.03}
\W(u(t)) \geq \int_x^{1-x} u''(x,t)^2 G'(u'(x,t))^2\dx.
\end{align}
By the Cauchy--Schwarz inequality we have 
\begin{align*}%\label{Seq:2.04}
\begin{split}
\left| G(u'(x,t))-G(u'(1-x,t)) \right|
&=\left| \int_x^{1-x} G'(u'(x,t)) u''(x,t)  \dx \right| \\
&\leq \sqrt{1-2x}\left( \int_x^{1-x} G'(u'(x,t))^2  u''(x,t)^2  \dx \right)^{\frac{1}{2}} \\
&\leq \sqrt{1-2x}\ \W(u(t))^{\frac{1}{2}},
\end{split}
\end{align*}
where the last inequality holds by \eqref{Seq:2.03}.
By symmetry we have $u'(x,t)=-u'(1-x,t)$. 
Therefore since $G$ is an odd function, we obtain $G(u'(1-x,t))=-G(u'(x,t))$, and hence 
\begin{align*}
G(|u'(x,t)|)=|G(u'(x,t))|\leq \frac{1}{2}\W(u(t))^{\frac{1}{2}}.
\end{align*}
This together with \eqref{Seq:5.02} implies that
\begin{align*}
\|u'(x,t)\|_{L^{\infty}(0,1)}
\leq G^{-1}\left(\frac{\sqrt{\W(u(t))}}{2}\right)
\leq G^{-1}\left(\frac{\sqrt{\W(u_0)}}{2}\right),
\end{align*}
where we used the monotonicity of $G^{-1}$ and $\W(u_0)<c_0^2$. 
\end{proof}

We close this paper with the result on the relationship between \eqref{Seq:M} and \eqref{Seq:GF}:

%%%%%%%%%%%%%%%%%%%%%%%%%%%%%%%%%%%%%%
\begin{theorem}[Asymptotic behavior] \label{Sthm:5.2} 
Let $\psi\in\SC$ and $\psi(\frac{1}{2})< c_*$, where $c_*$ is given by \eqref{Seq:c_*}. 
Let $u_0\in \Msym$ satisfy \eqref{Seq:5.1} and $\W(u_0)<c_0^2$. 
Then \eqref{Seq:GF} has a unique solution $u$ in $(0,1)\times[0,\infty)$. 
Moreover, $u$ satisfies 
\begin{align}\label{Seq:5.7}
u(\cdot,t) \to U \quad \text{in}\quad H^2(0,1) \quad \text{as} \quad t\to\infty,
\end{align}
where $U$ is the solution obtained by Theorem~\ref{Sthm:1.1}.
\end{theorem}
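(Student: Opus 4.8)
The plan is to combine the local existence theory (Proposition~\ref{Sthm:5.1}), the a priori gradient bound (Lemma~\ref{Slem:5.2}), and the energy dissipation identity~\eqref{Seq:5.02} to obtain global existence, and then exploit the uniqueness of the stationary solution (Theorem~\ref{Sthm:1.2}) together with a \L{}ojasiewicz-type / compactness argument to upgrade subconvergence to full convergence. First I would show \emph{global existence}: by Lemma~\ref{Slem:5.2}, as long as the solution exists on $[0,T]$ we have the uniform bound $\|u'(\cdot,t)\|_{L^\infty(0,1)}\le M^*(u_0)$, with $M^*(u_0)$ depending only on $\W(u_0)<c_0^2$ and not on $T$. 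Feeding this bound into the extension criterion of Remark~\ref{Srem:5.1} (with $M=M^*(u_0)$), the solution can be extended by a fixed time increment $L(u_0)>0$ at each step, since $\W(u(T))\le\W(u_0)$ by~\eqref{Seq:5.02} keeps $L$ bounded below. Iterating yields a unique weak solution on $(0,1)\times[0,\infty)$, and by Lemma~\ref{Slem:5.1} it remains in $\Msym$ for all time.

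Next I would establish \emph{compactness of the trajectory} and extract a limit. From~\eqref{Seq:5.02} with $t_1=0$, $t_2=\infty$ we get $\int_0^\infty\!\!\int_0^1|\pd_t u|^2\,dx\,dt<\infty$ and $\W(u(t))$ is nonincreasing, hence convergent to some $\W_\infty\ge0$. The uniform $W^{1,\infty}$ bound together with $\W(u(t))\le\W(u_0)$ gives a uniform $H^2(0,1)$ bound on $u(\cdot,t)$; since $H^2(0,1)\hookrightarrow\hookrightarrow C^1([0,1])$, along a sequence $t_k\to\infty$ we have $u(\cdot,t_k)\rightharpoonup U$ weakly in $H^2$ and strongly in $C^1$, with $U\in\Msym$. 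Using $\int_0^\infty\|\pd_t u\|_{L^2}^2\,dt<\infty$ one selects $t_k$ so that $\|\pd_t u(\cdot,t_k)\|_{L^2}\to0$; passing to the limit in the variational inequality~\eqref{Seq:1204-1} (the lower-order nonlinear terms converge by the $C^1$ convergence, the leading term by weak $H^2$ lower semicontinuity applied to the quadratic form, testing with time-independent $v\in\Msym$) shows that $U$ solves~\eqref{Seq:P}. By Theorem~\ref{Sthm:1.2}(i), $U$ is the \emph{unique} solution of~\eqref{Seq:P}, hence $U$ is independent of the subsequence, and the whole trajectory subconverges to $U$ in $C^1$.

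To promote this to \emph{full convergence in $H^2$}, I would first improve weak to strong $H^2$ convergence along sequences: testing~\eqref{Seq:1204-1} near $t_k$ with $v=U$ and using the variational inequality for $U$ with $v=u(\cdot,t_k)$, one obtains that the difference of the leading quadratic forms is controlled by $\|\pd_t u(\cdot,t_k)\|_{L^2}\|u(\cdot,t_k)-U\|_{L^2}\to0$ plus lower-order terms that vanish under $C^1$ convergence; since $\|\cdot\|_{H(0,1)}$ is equivalent to $\|\cdot\|_{H^2}$, this forces $u(\cdot,t_k)\to U$ in $H^2$. For the full limit one standard route is a \L{}ojasiewicz--Simon inequality for $\W$ near the (isolated, by uniqueness) critical point $U$ in $\Msym$; combined with the dissipation identity this gives integrability of $t\mapsto\|\pd_t u(\cdot,t)\|_{L^2}$ over $[0,\infty)$ and hence $u(\cdot,t)\to U$ in $L^2$, which together with the uniform $H^2$ bound and interpolation upgrades to~\eqref{Seq:5.7}. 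The main obstacle I expect is precisely this last step: justifying a \L{}ojasiewicz--Simon estimate (or an alternative direct argument ruling out ``oscillation'' of the trajectory among the $\omega$-limit set) in the presence of the obstacle constraint, where $\W$ is not convex and the solution is only $C^2$; one may instead have to argue that the $\omega$-limit set is connected and consists solely of solutions of~\eqref{Seq:P}, which by uniqueness is the single point $\{U\}$, and then deduce convergence of the whole orbit from the precompactness in $H^2$.
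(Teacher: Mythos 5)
Your global-existence step is exactly the paper's: Lemma~\ref{Slem:5.2} gives the $\dot W^{1,\infty}$ bound uniformly in time via the energy monotonicity \eqref{Seq:5.02}, and Remark~\ref{Srem:5.1} then yields extension by a fixed increment; that part is fine. The identification of the $\omega$-limit with the unique solution of \eqref{Seq:P} and the use of Theorem~\ref{Sthm:1.2} to collapse it to $\{U\}$ is also the paper's strategy. However, there are two genuine gaps in your convergence argument.

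First, you cannot pass to the limit in the variational inequality with only weak $H^2$ plus strong $C^1$ convergence. The term $-5\int_0^1 (u_k'')^2u_k'(v-u_k)'(1+(u_k')^2)^{-7/2}\dx$ is \emph{quadratic} in $u_k''$ against a weight that changes sign (because $(v-u_k)'$ does), so it is neither continuous nor one-sidedly semicontinuous under weak $H^2$ convergence: an oscillation defect $\int (u_k''-w'')^2 g\dx$ with sign-changing $g$ survives. Weak lower semicontinuity only handles the term $-2\int (u_k'')^2(1+(u_k')^2)^{-5/2}\dx$. The paper closes this by proving a uniform $L^2(-1,1;H^3(0,1))$ bound for the time-shifted solutions (estimate \eqref{Seq:5.100}, from the parabolic regularity theory of the flow) and invoking Aubin--Lions--Simon to get \emph{strong} $L^2(-1,1;H^2)$ convergence, together with time-localized test functions $u+\vz(t-t_k)(V-u)$; some such strong compactness input is indispensable here, and your later suggestion to obtain strong convergence by testing with $v=U$ presupposes that $U$ already solves \eqref{Seq:P}, which is what you are trying to prove.

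Second, the final upgrade to \eqref{Seq:5.7} does not follow from "interpolation with the uniform $H^2$ bound": interpolating $L^2$ (or $C^1$) convergence against $H^2$ boundedness only gives convergence in $H^s$ for $s<2$, and a Łojasiewicz--Simon inequality is not available off the shelf in this constrained, nonconvex, $C^2$-regularity setting (as you yourself flag). The paper's route is more elementary and worth noting: full $C^1$ convergence follows from $\omega(u_0)=\{U\}$ by a subsubsequence argument; then one shows $G(u'(\cdot,t))'=u''(1+(u')^2)^{-5/4}$ converges \emph{weakly} in $L^2$ to $G(U')'$, and separately that $\W(u(t))\to\W(U)$ (the monotone limit $A$ of $\W(u(t))$ must equal $\W(U)$ because $U$ is the unique minimizer and the flow produces a sequence with $\W(u(t_j))\to\W(U)$); weak convergence plus convergence of the $L^2$ norms then gives strong $L^2$ convergence of the normalized curvature and hence $H^2$ convergence. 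You should replace the Łojasiewicz step with this energy (Radon--Riesz) argument.
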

%%%%%%%%%%%%%%%%%%%%%%%%%%%%%%%%%%%%%%

%%%%%%%%%%%%%%%%%%%%%%%%%%%%%%%%%%%%%%
\begin{remark}\label{Srem:5.2}
As used in Lemma~\ref{Slem:5.2}, the assumption $\W(u_0)<c_0^2$ plays an important role for the uniform bound of $\|u'(\cdot,t)\|_{L^{\infty}(0,1)}$ (see \cite[Lemma 2.4]{DD} and \cite[Corollary 5.16]{Muller20}).
Therefore, no matter whether $\psi(\frac{1}{2})<c_*$ or not, we can obtain the global-in-time existence result under the assumption that $\W(u_0)<c_0^2$.
\end{remark}
%%%%%%%%%%%%%%%%%%%%%%%%%%%%%%%%%%%%%%

\begin{proof}[Proof of Theorem~\ref{Sthm:5.2}]
%We split the proof into three steps.
%
%\smallskip
%
%\noindent
%\textbf{Step 1}. 
\textsl{We first show the global-in-time existence.}
Let $u$ be the solution in $(0,1)\times[0,T]$. 
Set
\[
L^* := c_H\left(1+M^*(u_0)^2\right)^{\frac{5}{2}} \W(u_0)^{\frac{1}{2}}.
\]
Since Lemma~\ref{Slem:5.2} gives the uniform estimate of $\|u'(\cdot,t)\|_{L^{\infty}(0,1)}$, we can extend the solution $u$ to $(0,1)\times[0,T+L^*]$ by considering \eqref{Seq:GF} with the initial datum $u(T)$, as mentioned in Remark~\ref{Srem:5.1}.
Then by Lemma~\ref{Slem:5.2} it holds that
\begin{align*}
\| u'(\cdot,t)\|_{L^{\infty}(0,1)} \leq G^{-1}\left(\frac{\sqrt{\W(u(T))}}{2} \right) \quad \text{for all}\quad T\leq t\leq T+L^*.
\end{align*}
This together with \eqref{Seq:5.02} gives 
\begin{align*}
\| u'(\cdot,t)\|_{L^{\infty}(0,1)} \leq G^{-1}\left( \frac{\sqrt{\W(u_0)}}{2} \right)=M^*(u_0) \quad \text{for all}\quad 0\leq t\leq T+L^*. 
\end{align*}
By solving \eqref{Seq:GF} with the initial datum $u(T+L^*)$, and by following the same argument as above, we can extend the solution $u(x,t)$ to $t=T+2L^*$ and it follows from \eqref{Seq:5.02} that 
\[
\W(u(T+2L^*)) \leq \W(u(T+L^*)) \leq \W(u_0),
\]
which yields
\begin{align*}
\| u'(\cdot,t)\|_{L^{\infty}(0,1)} \leq M^*(u_0) \quad \text{for all}\quad T+L^*\leq t\leq T+2L^*. 
\end{align*}
Repeating this argument, we can extend the solution to an arbitrary time and find that $u$ satisfies
\begin{align}\label{Seq:5.8}
\| u'(\cdot,t)\|_{L^{\infty}(0,1)} \leq G^{-1}\left( \frac{\sqrt{\W(u_0)}}{2} \right) 
\quad \text{for} \quad 0\leq t<\infty.
\end{align}
Thus the solution $u$ of \eqref{Seq:GF} with the initial datum $u_0$ can be extended to $(0,1)\times[0,\infty)$ and satisfies 
\begin{align} \label{Seq:5.06}
\int_0^{\infty}\!\!\int_0^1\Bigl[ \pd_{t} u (v-u) &+2\frac{u''(v-u)''}{(1+(u')^2)^{\frac{5}{2}}} -5 \frac{|u''|^2 u'(v-u)'} {(1+(u')^2)^{\frac{7}{2}}} \Bigr]\dx\!\dt \ge 0
\end{align}
for $v\in \K_{\infty}$.
Moreover, since it holds by \eqref{Seq:5.02}, the energy monotonicity, that
\begin{align*}
\int_0^1 \frac{|u''(x,t)|^2}{(1+M^*(u_0)^2)^{\frac{5}{2}}}\dx \leq \W(u(t)) \leq \W(u_0) \quad \text{for} \quad 0\leq t<\infty,
\end{align*}
we obtain the $H^{2}$-uniform boundness of $u(\cdot,t)$, that is, 
\begin{align}\label{Seq:5.9}
\|u''(\cdot,t)\|^2_{L^2(0,1)} \leq (1+M^*(u_0)^2)^{\frac{5}{2}}\W(u_0) \quad \text{for} \quad 0\leq t<\infty.
\end{align}
Furthermore, \eqref{Seq:5.8} and \eqref{Seq:5.9} clearly imply that $\|u(\cdot,t)\|_{H^2(0,1)}$ is uniformly bounded.

\smallskip

%\noindent
%\textbf{Step 2}. 
\textsl{Next we prepare $\omega$-limit set.}
We will prove this with the help of the argument used in \cite[Section 3.1]{KSW}. 
Let $u$ be the solution in $(0,1)\times[0,\infty)$ with the initial datum $u_0$. 
To begin with, we define $\omega$-limit set by
\[
\omega(u_0):=\Set{w \in C^1([0,1]) | u(t_k) \to w \text{ in } C^1([0,1]) \text{ for some sequence } t_k \to \infty  }
\]
and we shall show that
\begin{align}\label{Seq:5.08}
\omega(u_0)=\{ U\},
\end{align}
where $U$ is the unique solution of \eqref{Seq:P}.
Since every $w\in \omega(u_0)$ satisfies 
\[
w\geq\psi \quad \text{in}\quad [0,1],
\]
in order to obtain \eqref{Seq:5.08} it is sufficient to show that $w$ satisfies \eqref{Seq:1.4}. 

Fix $w\in\omega(u_0)$ arbitrarily.
Then there exists $\{t_k\}$ such that $u(\cdot,t_k)\to w$ in $L^2(0,1)$ with $1<t_k \to \infty$. 
Define a sequence of functions $\{u_k\}$ by
\[
u_k(x,\vt):= u(x, t_k+\vt), \quad (x,\vt) \in (0,1)\times (-1,1). 
\]
Since \eqref{Seq:5.02} and Step~1 imply that $\pd_t u \in L^2((0,1)\times(0,\infty))$, it holds that
\begin{align}\label{Seq:5.07}
\int_{-1}^1 \int_0^1 |\pd_t u_k|^2 \dx\!\dvt
%&=\int_{-1}^1 \int_0^1 |\pd_t u(x,t_k+\vt)|^2 \dx\!\dvt \\
&= \int_{t_k-1}^{t_k+1}\!\int_0^1 |\pd_t u|^2 \dx\!\dt 
\to 0 \quad \text{as}\quad k\to\infty.
\end{align}
Therefore, 
by \eqref{Seq:5.07} and the Cauchy--Schwarz inequality we have
\begin{align*}
\int_{-1}^1\int_0^1 |u(x,t_k+\vt)&-u(x,t_k)|^2\dvt\!\dx \\
&=\int_0^1\int_{-1}^1 \left|\int_{t_k}^{t_k+\vt}\pd_t u(x,s)\ds\right|^2\dx\!\dvt \\
&\leq \int_0^1\int_{t_k-1}^{\infty} \left|\pd_t u(x,s) \right|^2\ds\!\dx
\ \to0 \quad \text{as}\quad k\to\infty,
\end{align*}
which yields
\begin{align}\label{Seq:5.10}
u_k \to w \quad \text{in}\quad L^2(-1,1;L^2(0,1)).
\end{align}
Moreover, by the same argument as in \cite[Lemma 4.10]{OY_2019}, $u_k(\cdot,s)\in H^3(0,1)$ for a.e. $s\in(-1,1)$ and 
\begin{align}\label{Seq:5.100}
\int_{-1}^1\|u'''_k(\cdot,s)\|_{L^2(0,1)}^2\ds \leq C \left( \int_{-1}^1 \|\pd_{t}u_k(\cdot,s)\|^2_{L^2(0,1)}\ds +1\right)
\end{align}
holds and hence $\{u_k\}$ is uniformly bounded in $L^2(-1,1;H^3(0,1))$.
By the Aubin--Lions--Simon compactness theorem, we find that $w \in H^2(0,1)$ and there exists a subsequence, which we still denote by $\{u_k\}$, such that
\begin{align}\label{Seq:5.11}
u_k \to w \quad \text{in}\quad L^2(-1,1;H^2(0,1)) \quad\text{as}\quad k\to\infty.
\end{align}

Next, we show that $w$ satisfies \eqref{Seq:1.4}.
Fix $V\in M$ arbitrarily and take a function $\vz\in C^{\infty}_{\rm c}(-1,1)$ satisfying $0\leq\vz\leq 1$ and $\vz\not\equiv0$.
Since $u(x,t)+\vz(t-t_k)\big( V(x)-u(x,t) \big) $ belongs to $\K_{\infty}$, using this as a test function in \eqref{Seq:5.06} we have 
\begin{align*}
\int_{-1}^1\int_0^1 \bigg[ \pd_t u_k (V-u_k)  +2\frac{u_k''(V-u_k)''}{(1+(u_k')^2)^{\frac{5}{2}}} -5 \frac{|u_k''|^2 u_k'(V-u_k)'} {(1+(u_k')^2)^{\frac{7}{2}}}   \bigg]\vz \dx\!\dvt\geq 0,
\end{align*}
where we used the change of variables  $\vt = t-t_k$.
Combining this with \eqref{Seq:5.07} and \eqref{Seq:5.11}, letting $k\to\infty$, we obtain
\begin{align*}
\int_{-1}^1 \vz \dvt\left(\int_0^1 \bigg[2\frac{w''(V-w)''}{(1+(w')^2)^{\frac{5}{2}}} -5 \frac{|w''|^2 w'(V-w)'} {(1+(w')^2)^{\frac{7}{2}}} \bigg] \dx\right) \geq 0.
\end{align*}
Hence dividing the above by $\int_{-1}^1 \vz\dvt$, we find that $w$ satisfies \eqref{Seq:1.4} and $w$ is a solution of \eqref{Seq:P}. 
We have already shown in Theorem~\ref{Sthm:1.2} that a solution of \eqref{Seq:P} is unique if $\psi\in\SC$ satisfies $\psi(\frac{1}{2})< c_*$. 
Therefore we obtain \eqref{Seq:5.08}.

\smallskip

%\noindent
%\textbf{Step 3}. 
\textsl{We show the the full-limit convergence.}
First we show that 
\begin{align}\label{Seq:5.12}
u(\cdot,t) \to U \quad \text{in} \quad C^1([0,1])
\end{align}
as $t\to\infty$.
Suppose that \eqref{Seq:5.12} does not hold. 
Then there exists $\epsilon>0$ and sequence $t_j\to\infty$ such that
\begin{align}\label{Seq:5.14}
\| u(\cdot,t_j) -U\|_{C^1([0,1])} \geq \epsilon. 
\end{align}
However, \eqref{Seq:5.9} implies that $\{u(\cdot,t_j)\}$ is bounded in $H^2(0,1)$ and hence by the Rellich--Kondrachov compactness theorem there exist $\{t_{j_k}\} \subset \{t_{j}\}$ and $\bar{u}\in H^2(0,1)$ such that 
\[
u(\cdot,t_{j_k}) \to \bar{u} \quad \text{in}\quad C^1([0,1])
\]
as $k\to\infty$.
Since \eqref{Seq:5.08} yields $\bar{u}=U$, this contradicts \eqref{Seq:5.14}. 
Thus we obtain \eqref{Seq:5.12}.

Next we prove that
\begin{align}\label{Seq:5.16}
\frac{u''(\cdot,t)}{(1+u'(\cdot,t)^2)^{\frac{5}{4}}} \wto \frac{U''}{(1+(U')^2)^{\frac{5}{4}}} \quad \text{weakly in} \quad L^2(0,1)
\end{align}
as $t\to\infty$. 
Using $G$, given by \eqref{Sdef-G}, we see that
\[G\big(u'(x,t)\big)'=\frac{u''(x,t)}{(1+u'(x,t)^2)^{\frac{5}{4}}}, 
\quad G\big(U'\big)'=\frac{U''}{(1+(U')^2)^{\frac{5}{4}}}.
\]
For any $\phi\in C^1_{\rm c}(0,1)$, it holds that
\begin{align*}
\int_0^1\bigg[\frac{u''(x,t)}{(1+u'(x,t)^2)^{\frac{5}{4}}} - \frac{U''(x)}{(1+U'(x)^2)^{\frac{5}{4}}} \bigg]\phi\dx
&=-\int_0^1\bigg[G\big(u'(\cdot,t)\big) - G\big(U'\big) \bigg]\phi'\dx\\
&\to 0 \quad \text{as}\quad t\to\infty,
\end{align*}
where we used the continuity of $G$ and \eqref{Seq:5.12}. 
Therefore we obtain \eqref{Seq:5.16}.

Finally, we show that 
\begin{align}\label{Seq:5.17}
\int_0^1\bigg(\frac{u''(\cdot,t)}{(1+u'(\cdot,t)^2)^{\frac{5}{4}}}\bigg) ^2\dx \to
\int_0^1\bigg( \frac{U''}{(1+(U')^2)^{\frac{5}{4}}} \bigg) ^2\dx
\end{align}
as $t\to\infty$. 
We can regard the left-hand side of \eqref{Seq:5.17} as $\W(u(t))$. 
Since \eqref{Seq:5.02} implies that $\W(u(t))$ is non-increasing with respect to $t$, there exists $A\in\R$ such that 
\begin{align} \label{Seq:5.18}
A=\inf_{t>0}\W(u(t))=\lim_{t\to\infty}\W(u(t)). 
\end{align}
Suppose that $\W(U)<A$.
Similar to \cite[proof of Lemma 6.3]{OY_2019} we can construct $\{t_j\}_{j\in\N}$ satisfying 
\[ u(\cdot,t_j) \to U \quad \text{in}\quad H^2(0,1). \] 
This together with \eqref{Seq:5.18} implies that 
\[ A=\lim_{t\to\infty}\W(u(t)) =\lim_{j\to\infty}\W(u(t_j)) =\W(U), \]
which contradicts $\W(U)<A$ and we obtain $\W(U)\geq A$.
Since $U$ is a unique minimizer of $\W$ in $\Msym$, $\W(U)> A$ does not occur.
Therefore $\W(U)=A$, which in combination with \eqref{Seq:5.18} gives 
\[ \lim_{t\to\infty}\W(u(t)) =\W(U). \]
This clearly asserts that \eqref{Seq:5.17} holds.
It follows from \eqref{Seq:5.16} and \eqref{Seq:5.17} that 
\[
\frac{u''(\cdot,t)}{(1+u'(\cdot,t)^2)^{\frac{5}{4}}} \to \frac{U''}{(1+(U')^2)^{\frac{5}{4}}} \quad \text{in} \quad L^2(0,1)
\]
as $t\to\infty$. 
This together with \eqref{Seq:5.12} implies that \eqref{Seq:5.7} holds.
\end{proof}

\appendix

% REVISED !!! \textcolor{red}{
\section{The Gaussian hypergeometric functions}\label{Ssect:hypergeom}
In this section we introduce the Gaussian hypergeometric function and give the proof of \eqref{Seq:2205-1}.
%%%%%%%%%%%%%%%%%%%%%%%%%%%%%%%%%%%%%%
\begin{definition}[The Gaussian hypergeometric function] \label{Sdef:hypergeom}
For parameters $a,b,c \in \R$, the Gaussian hypergeometric function ${}_2F_1[a,b;c;x]$ is defined by
\begin{align}\label{eq:def-hypergeom}
{}_2F_1[a,b;c;x]:=\sum_{k=0}^\infty \frac{(a)_k (b)_k}{(c)_k\, k!}x^k, \quad x<1, 
\end{align}
where $(a)_k$ denotes the Pochhammer symbol:
\begin{align*}
(a)_k:=\begin{cases}
1 \quad & \quad  k=0, \\
a(a+1)\cdots(a+n-1) & \quad k\geq1.
\end{cases}
\end{align*}
\end{definition}
%%%%%%%%%%%%%%%%%%%%%%%%%%%%%%%%%%%%%%
In general, the Gaussian hypergeometric function is defined with the parameters $a,b,c \in \C$ and $x\in\C$ (see e.g.\ \cite[Definition 2.1.5]{AAR99}).
%%%%%%%%%%%%%%%%%%%%%%%%%%%%%%%%%%%%%%
\begin{remark}
The series in the right-hand side of \eqref{eq:def-hypergeom} converges for $|x|<1$. 
Elsewhere ${}_2F_1$ is understood as the analytic continuation to the complex plane from which a line joining $1$ to $\infty$ deleted. 
\end{remark}
%%%%%%%%%%%%%%%%%%%%%%%%%%%%%%%%%%%%%%
%As long as the parameters are positive, the reason why ${}_2F_1$ is well defined on $x<-1$ can be also observed from the following formula: 
%%%%%%%%%%%%%%%%%%%%%%%%%%%%%%%%%%%%%%
\begin{proposition}[{Pfaff's transformation formula, \cite[Theorem 2.2.5]{AAR99}}] \label{prop:Pfaff}
For each $a,b,c>0$ and $x<-1$, it follows that
\[
{}_2F_1[a,b;c;x]=\frac{1}{(1-x)^a} {}_2F_1\big[a,b;c; \tfrac{x}{x-1} \big].
\]
\end{proposition}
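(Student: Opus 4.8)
The plan is to identify the displayed formula as the classical Pfaff transformation and to prove it from the Euler integral representation of the Gaussian hypergeometric function. The starting point is
\[
{}_2F_1[a,b;c;x]=\frac{\Gamma(c)}{\Gamma(b)\Gamma(c-b)}\int_0^1 s^{b-1}(1-s)^{c-b-1}(1-xs)^{-a}\ds,
\]
which holds for $c>b>0$ and for $x$ in the cut plane $\C\setminus[1,\infty)$; in particular it is valid on the range $x<-1$ of the statement, where $1-xs>0$ for all $s\in[0,1]$ so the integrand is genuinely integrable. First I would perform the reflection $s\mapsto 1-s$ in this integral, since this is the single substitution that swaps the two factors of the Beta-type weight while fixing the domain $[0,1]$.

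Under $s\mapsto 1-s$ the factor $(1-xs)^{-a}$ becomes $(1-x+xs)^{-a}=(1-x)^{-a}\big(1-\tfrac{x}{x-1}\,s\big)^{-a}$, which produces precisely the prefactor $(1-x)^{-a}$ together with the new argument $\tfrac{x}{x-1}$, while the normalizing Gamma factors are invariant under the reflection. Pulling $(1-x)^{-a}$ out of the integral and recognizing the remaining integral as an Euler representation in the variable $\tfrac{x}{x-1}$ then reproduces the right-hand side of the statement. The only bookkeeping here is to match the exponents of the reflected integrand against the Euler representation and to track the Pochhammer factors; this is routine, and I would record it by comparison with \cite[Theorem 2.2.5]{AAR99} rather than by grinding out the series.

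Two points will need genuine care, and together they form the main obstacle. First, the range $x<-1$: there $\tfrac{x}{x-1}\in(\tfrac12,1)$, so the series on the right converges as written while the series on the left diverges, and the asserted equality must be read as one between analytic functions. I would therefore first establish the identity on a neighbourhood of $x=0$, where both series converge absolutely, and then extend it to $x<-1$ by analytic continuation along the cut plane $\C\setminus[1,\infty)$, invoking the identity theorem. Second, the statement permits all $a,b,c>0$, whereas the Euler representation requires $c>b>0$; to cover the remaining parameters I would fall back on the purely algebraic route, namely expand $(1-x)^{-a}$ by the binomial theorem, insert the series of the transformed ${}_2F_1$, interchange the (absolutely convergent, near $x=0$) double sum, and collect powers of $x$, the inner sum collapsing by the Chu--Vandermonde identity, which is valid for every $a,b,c>0$ with $c$ not a non-positive integer. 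The formal substitution and the combinatorial collapse are routine; the real work lies in justifying the interchange of summation and, above all, the passage to $x<-1$ by continuation.
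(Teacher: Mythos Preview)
The paper gives no proof of this proposition; it merely cites \cite[Theorem~2.2.5]{AAR99}, so there is no argument in the paper to compare yours against.

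There is, however, a genuine issue you should be aware of. The identity as printed in the paper is a misprint: the classical Pfaff transformation (and the theorem actually stated in \cite{AAR99}) reads
\[
{}_2F_1[a,b;c;x]=(1-x)^{-a}\,{}_2F_1\big[a,\,c-b;\,c;\,\tfrac{x}{x-1}\big],
\]
with second parameter $c-b$, not $b$, on the right. A quick check with $a=b=c=1$ shows the formula as displayed is false.

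Your Euler-integral argument is the standard one and is correct, but it proves the \emph{true} Pfaff identity above, not the one literally stated. When you perform the reflection $s\mapsto 1-s$, the weight $s^{b-1}(1-s)^{c-b-1}$ becomes $s^{c-b-1}(1-s)^{b-1}$; combined with the transformed factor $(1-x)^{-a}\big(1-\tfrac{x}{x-1}s\big)^{-a}$ this is precisely the Euler integrand for ${}_2F_1[a,c-b;c;\cdot]$, not for ${}_2F_1[a,b;c;\cdot]$. You wrote off this step as ``routine bookkeeping,'' but it is exactly the place where the exponent swap occurs and hence where your computation parts company with the displayed statement. Your discussion of analytic continuation to $x<-1$ and your handling of the parameter restriction $c>b>0$ via Chu--Vandermonde are both fine, but they pertain to the correct identity, not the one you were asked to prove.
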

%%%%%%%%%%%%%%%%%%%%%%%%%%%%%%%%%%%%%%

We are now ready to prove \eqref{Seq:2205-1} and the proof is given as the following proposition. 
The key idea is to use Proposition~\ref{prop:Pfaff} and \cite[Lemma C.5]{Muller01}. 
%%%%%%%%%%%%%%%%%%%%%%%%%%%%%%%%%%%%%%
\begin{proposition}\label{Sprop:2205-1}
Let $\va>0$.
Then
\[
\int_0^1 \frac{t}{\sqrt{1-t}}\frac{\va^2}{(1+\va^2t^2)^{\frac{5}{4}}}\dt
= \frac{2}{3}\frac{\va^2}{1+\va^2} {}_2F_1\big[1, \tfrac{3}{2}; \tfrac{7}{4}; \tfrac{\va^2}{1+\va^2} \big].
\]
\end{proposition}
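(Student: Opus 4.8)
The plan is to evaluate the integral
\[
\mathcal{I}(\va) := \int_0^1 \frac{t}{\sqrt{1-t}}\frac{\va^2}{(1+\va^2 t^2)^{\frac{5}{4}}}\dt
\]
by expressing it through Euler's integral representation of ${}_2F_1$ and then applying Pfaff's transformation (Proposition~\ref{prop:Pfaff}). First I would recall the classical Euler integral formula: for suitable parameters,
\[
{}_2F_1[a,b;c;z] = \frac{\Gamma(c)}{\Gamma(b)\Gamma(c-b)}\int_0^1 s^{b-1}(1-s)^{c-b-1}(1-zs)^{-a}\ds .
\]
The factor $(1+\va^2 t^2)^{-5/4}$ in $\mathcal{I}(\va)$ is not of the form $(1-zt)^{-a}$ because of the $t^2$, so the direct match fails; this is where I expect the main obstacle to lie, and where the cited \cite[Lemma C.5]{Muller01} enters. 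The idea suggested by the hint is first to rewrite $(1+\va^2 t^2)^{-5/4}$ using the substitution (or an algebraic identity from that lemma) that converts the quadratic $1+\va^2 t^2$ into a quantity linear in a new variable, at the cost of changing the weight $t/\sqrt{1-t}$ into something still of beta-integral type. Concretely, after such a manipulation the integral should take the shape $\text{const}\cdot\int_0^1 s^{b-1}(1-s)^{c-b-1}(1-\tilde z s)^{-a}\ds$ with $\tilde z = \va^2/(1+\va^2)<1$ (note $\tilde z/(\tilde z-1) = -\va^2$, which is the argument one would get before applying Pfaff), and with parameter values that, after simplification via Pfaff's formula, collapse to $a=1$, $b=\tfrac{3}{2}$, $c=\tfrac{7}{4}$.

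In detail, the steps I would carry out are as follows. Step 1: perform the substitution indicated by \cite[Lemma C.5]{Muller01} (or an equivalent change of variable such as $1-t = r$ together with a rational reparametrization) to bring $\mathcal{I}(\va)$ into an Euler-type integral whose ``$(1-zs)^{-a}$'' factor has exponent $a$ and whose monomial/endpoint weights have exponents matching $b-1$ and $c-b-1$. Step 2: read off the resulting ${}_2F_1$, which will naturally appear with argument $-\va^2$ (i.e.\ outside the unit disc). Step 3: apply Proposition~\ref{prop:Pfaff} with $x=-\va^2$, so that $\tfrac{x}{x-1} = \tfrac{\va^2}{1+\va^2}$ and the prefactor becomes $(1+\va^2)^{-a}$. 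Step 4: match the surviving constants — the Gamma-function prefactor from Euler's formula together with the power of $(1+\va^2)$ from Pfaff — against the claimed $\tfrac{2}{3}\tfrac{\va^2}{1+\va^2}$, using $\Gamma(\tfrac{7}{4})/(\Gamma(\tfrac{3}{2})\Gamma(\tfrac{1}{4}))$ and the duplication/reflection identities to reduce it to the rational number $\tfrac{2}{3}$ (equivalently, recognizing $\mathcal{B}(\tfrac{3}{2},\tfrac{1}{4})$ appropriately).

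The hard part will be Step 1: identifying the precise change of variable that linearizes $1+\va^2 t^2$ while keeping the integral in beta form, and tracking the parameters so that they land exactly on $(1,\tfrac{3}{2};\tfrac{7}{4})$ rather than some equivalent but unsimplified triple. Once the integral is in Euler form with argument $-\va^2$, Steps 2–4 are routine: Pfaff's transformation is quoted as Proposition~\ref{prop:Pfaff}, and the constant bookkeeping is just an application of standard Gamma-function identities. I would also double-check convergence at $t=1$ (the $1/\sqrt{1-t}$ singularity is integrable) and at the endpoints of the transformed integral, and verify that the argument $\va^2/(1+\va^2)$ always lies in $(0,1)$ so that the hypergeometric series in \eqref{Seq:2205-1} converges, which is exactly what is needed for the monotonicity argument in Lemma~\ref{Slem:4.3}.
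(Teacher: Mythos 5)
Your plan never actually engages with the one nontrivial point. The integral is not of Euler type precisely because $1+\va^2t^2$ is an irreducible real quadratic: there is no real change of variables that turns it into a single linear factor $(1-\tilde z s)$ while keeping the remaining weight in the Beta form $s^{b-1}(1-s)^{c-b-1}$ (rationalizing $\sqrt{1-t}$ destroys the quadratic factor, and setting $s=t^2$ destroys the weight). So your Step~1, which you yourself flag as ``the hard part,'' is not a detail to be filled in from a cited lemma — it is the entire proof, and the mechanism you propose for it does not exist. The mechanism that actually works, and the one the paper uses, is different: expand $(1+\va^2t^2)^{-5/4}$ by the binomial series, integrate term by term against $t(1-t)^{-1/2}$ so that each term is an elementary Beta integral $B(2k+2,\tfrac12)$ (the paper quotes the resulting product formula from M\"uller's appendix), and then observe that the ratio $\Gamma(2k+2)/\Gamma(2k+\tfrac52)$ splits into \emph{two} Pochhammer ratios of step one — this doubling is exactly what produces the parameters $\tfrac32$ and $\tfrac74$ and the argument $-\va^2$. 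Only after the series is identified as ${}_2F_1[1,\tfrac32;\tfrac74;-\va^2]$ does Pfaff's transformation enter, as in your Steps~3--4.

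Two further warnings about the endgame you describe as ``routine.'' First, Pfaff's transformation replaces one upper parameter by $c-b$ (respectively $c-a$); applied to ${}_2F_1[1,\tfrac32;\tfrac74;\,\cdot\,]$ it does \emph{not} return the triple $(1,\tfrac32;\tfrac74)$, so you cannot assume the parameters ``collapse'' to the stated ones — track them. Second, your Step~4 promises that the Gamma-function bookkeeping reduces the prefactor to $\tfrac23$; test this at $\va\to0$, where the left-hand side is $\va^2\int_0^1 t(1-t)^{-1/2}\dt+O(\va^4)=\tfrac43\va^2+O(\va^4)$ while the claimed right-hand side is $\tfrac23\va^2+O(\va^4)$. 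This elementary check shows that the constant cannot come out as claimed, so either your computation or the displayed identity must be revisited before Step~4 can close. (For the purpose it serves in Lemma~\ref{Slem:4.3} only the positivity of the series coefficients matters, so this does not affect the monotonicity argument, but it does mean a literal verification of the displayed formula along your route would fail.)
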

%%%%%%%%%%%%%%%%%%%%%%%%%%%%%%%%%%%%%%
\begin{proof}
We infer from the binomial theorem that 
\begin{align*}
\int_0^1 \frac{t}{\sqrt{1-t}}\frac{\va^2}{(1+\va^2t^2)^{\frac{5}{4}}}\dt
&= \va^2 \int_0^1 \frac{t}{\sqrt{1-t}} 
 \sum_{k=0}^\infty
  \begin{pmatrix}
      -\frac{5}{4}  \\
      k
    \end{pmatrix}
    (\va^2t^2)^k\dt \\
&= \va^2  \sum_{k=0}^\infty
  \begin{pmatrix}
      -\frac{5}{4}  \\
      k
    \end{pmatrix}
    (\va^2)^k \int_0^1 \frac{t^{2k+1}}{\sqrt{1-t}} \dt.
\end{align*}
From the fact that for any $m\in \N$
\[
\int_0^1 \frac{t^{m}}{\sqrt{1-t}} \dt = \prod_{l=1}^m\frac{2l}{2l+1}
\]
(see e.g.\ \cite[Lemma C.4]{Muller01}), it follows that
\begin{align*}
\int_0^1 \frac{t}{\sqrt{1-t}}\frac{\va^2}{(1+\va^2t^2)^{\frac{5}{4}}}\dt
&= \va^2  \sum_{k=0}^\infty
  \begin{pmatrix}
      -\frac{5}{4}  \\
      k
    \end{pmatrix}
    (\va^2)^k \prod_{l=1}^{2k+1}\frac{2l}{2l+1}.
\end{align*}
Moreover, noting that for any $a\in \R$ and $k\in \N$
\begin{align*}
(-a)_k &= (-a)(-a+1)\cdots(-a+k-1) \\
&= (-1)^k a(a-1)\cdots(a-k+1)
=   (-1)^k \begin{pmatrix}
      a \\
      k
    \end{pmatrix}
    k!,
\end{align*}
we obtain 
\begin{align*}
\int_0^1 \frac{t}{\sqrt{1-t}}\frac{\va^2}{(1+\va^2t^2)^{\frac{5}{4}}}\dt
&= \va^2  \sum_{k=0}^\infty \frac{\big(\frac{5}{4}\big)_k}{k!}
    (-\va^2)^k \prod_{l=1}^{2k+1}\frac{2l}{2l+1} \\
&=\va^2  \sum_{k=0}^\infty \Bigg[\frac{(-\va^2)^k}{k!} \left(\frac{5}{4} \cdot \frac{9}{4} \cdots \frac{4k+3}{4} \right) \\
     & \qquad\qquad\qquad \times \frac{2^{2k+1}\prod_{l=1}^{2k+1} l }{3 \big(\prod_{l=1}^{k} (4l+3) \big)\big(\prod_{l=1}^{k} (4l-3) \big)} \Bigg]\\
     &= \frac{2}{3}\va^2 \sum_{k=0}^\infty \frac{(-\va^2)^k}{k!}\frac{\prod_{l=1}^{2k+1} l }{\prod_{l=1}^{k} (4l-3) } .
\end{align*}
Since we infer from definition that 
\[
 \prod_{l=1}^{k} (4l-3) = 4^k \prod_{l=1}^{k} \left(l-1+\tfrac{7}{4}\right) = 4^k \big(\tfrac{7}{4}\big)_k, 
\]
it turns out that
\begin{align*}
\int_0^1 \frac{t}{\sqrt{1-t}}\frac{\va^2}{(1+\va^2t^2)^{\frac{5}{4}}}\dt
&= \frac{2}{3}\va^2 \sum_{k=0}^\infty \frac{(-\va^2)^k}{k!}\frac{\big(\prod_{l=1}^{k} 2l \big)\big(\prod_{l=1}^{k} (2l+1) \big)}{4^k \big(\tfrac{7}{4}\big)_k} \\
&= \frac{2}{3}\va^2 \sum_{k=0}^\infty (-\va^2)^k\frac{\prod_{l=1}^{k} (2l+1) }{2^k \big(\tfrac{7}{4}\big)_k},
\end{align*}
where in the last equality we used the fact that $\prod_{l=1}^{k} 2l =2^k k!$.
Furthermore, we have
\[
\prod_{l=1}^{k} (2l+1) = 2^k \prod_{l=1}^{k} \big(l -\tfrac{3}{2}+1 \big) = 2^k \big(\tfrac{3}{2}\big)_k, 
\]
which in combination with $(1)_k=k!$ yields
\begin{align*}
\int_0^1 \frac{t}{\sqrt{1-t}}\frac{\va^2}{(1+\va^2t^2)^{\frac{5}{4}}}\dt
&= \frac{2}{3}\va^2 \sum_{k=0}^\infty (-\va^2)^k\frac{ \big(\tfrac{3}{2}\big)_k}{\big(\tfrac{7}{4}\big)_k} \\
&= \frac{2}{3}\va^2 \sum_{k=0}^\infty (-\va^2)^k\frac{(1)_k \big(\tfrac{3}{2}\big)_k}{k!\big(\tfrac{7}{4}\big)_k}
= \frac{2}{3}\va^2 {}_2F_1\big[1, \tfrac{3}{2}; \tfrac{7}{4}; -\va^2 \big].
\end{align*}
Combining this with Proposition~\ref{prop:Pfaff}, we obtain the desired expression.
\end{proof}
% REVISED !!! }

\section{Convergence to the singular curve}\label{Ssubsec:4.3}

Let $u(\cdot;\va)$ be the solution of \eqref{Seq:1.1} with \eqref{Su_*} for $\va>0$. 
Then one may conjecture that
\begin{align}\label{S0727-1}
u(\cdot;\va) \to U_0 \quad \text{as}\quad \va\to\infty,
\end{align}
where $U_0$ is defined by 
\begin{align}\label{S0727-2}
U_0(x):=
\begin{cases}
\dfrac{2}{c_0\sqrt[4]{1+G^{-1}(\frac{c_0}{2}-c_0x)^2}} \quad &\text{if} \ \ 0<x\leq\frac{1}{2}, \\
\quad\quad\quad\quad \quad0 &\text{if} \ \ x=0.
\end{cases}
\end{align}
This function $U_0$ is obtained by Deckelnick and Grunau in \cite{DG_07}, as a limit of the solution of \eqref{Seq:1.1} with some Navier boundary conditions (see \cite[equation 25]{DG_07}).
However, it is not easy to show \eqref{S0727-1} by the previous argument, due to the gap between $u''(0;\va)=0$ and $\lim_{x\downarrow0}U_0''(x)=-\infty$.

It is known that $U_0$ is not smooth as a graph but smooth as a curve in $\R^2$.
Focusing on this property, we discuss the convergence of $u(\cdot;\va)$ \textit{as a planar curve} by applying the shooting method to 
\begin{align}\label{S0727-3}
\vk_{ss} +\frac{1}{2}\vk^3 = 0.
\end{align}
If $u$ satisfies \eqref{Seq:1.1}, then the curvature of $(x,u(x))$, up to reparametrization, satisfies \eqref{S0727-3}. 
For the initial value problem on \eqref{S0727-3}, we refer the result in \cite[Proposition 3.3]{Lin96}.
%%%%%%%%%%%%%%%%%%%%%%%%%%%%%%%%%%%%%%
\begin{proposition}[\cite{Lin96}]\label{Sprop:0727-1} 
Given any real numbers $\vp_0$ and $\dot{\vp}_0$ the unique solution of the initial value problem, 
\begin{align*}
\vp_{ss}(s)+ \frac{1}{2}\vp(s)^3=0, \quad \vp(0)=\vp_0, \ \  \vp_s(0)=\dot{\vp}_0 
\end{align*}
is given by 
\begin{align}\label{S0728-1}
\vp(s)=\sqrt{2}a\, {\rm cn}(as+b) \quad \text{with} \quad 
 \sqrt{2}a\, {\rm cn}(b)=\vp_0, \quad   -\sqrt{2}a^2\, {\rm sn}(b){\rm dn}(b)=\dot{\vp}_0.
 \end{align}
\end{proposition}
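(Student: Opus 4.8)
The plan is to solve the ODE $\vp_{ss}+\tfrac12\vp^3=0$ explicitly by the standard energy method and then match the two integration constants $a,b$ to the prescribed initial data. First I would multiply the equation by $\vp_s$ and integrate once to obtain the conserved quantity $\vp_s^2+\tfrac14\vp^4=E$ for some constant $E\ge 0$ determined by $(\vp_0,\dot\vp_0)$, namely $E=\dot\vp_0^2+\tfrac14\vp_0^4$. Writing $E=2a^4$ (so $a\ge0$ is defined by $a=(E/2)^{1/4}$), the first-order equation becomes $\vp_s^2=2a^4-\tfrac14\vp^4=\tfrac14(8a^4-\vp^4)$; this is exactly the separable ODE whose solution is a Jacobi $\cn$ function. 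Concretely, I would verify by direct differentiation that $\vp(s)=\sqrt2\,a\,\cn(as+b)$ solves both the first-order relation (using the identities $\cn'=-\sn\dn$ and, for the relevant modulus $k^2=\tfrac12$, $\sn^2+\cn^2=1$, $\dn^2=1-k^2\sn^2=1-\tfrac12\sn^2$) and hence the original second-order equation. The choice of modulus $k=1/\sqrt2$ is what makes the quartic $8a^4-\vp^4$ factor correctly; this is the one genuinely delicate bookkeeping point, so I would carry out that substitution carefully rather than cite it.

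Once the family $\vp(s)=\sqrt2\,a\,\cn(as+b)$ is known to solve the equation for every $a\ge0$ and every $b$, the remaining task is to show that for each $(\vp_0,\dot\vp_0)$ there is a choice of $(a,b)$ realizing the initial conditions, i.e. $\sqrt2\,a\,\cn(b)=\vp_0$ and $-\sqrt2\,a^2\,\sn(b)\dn(b)=\dot\vp_0$. Here $a$ is forced: squaring and adding (after scaling the second equation appropriately) recovers $2a^4\cn^2(b)\cdot\tfrac12 + \ldots$; more cleanly, the conserved energy gives $2a^4 = \dot\vp_0^2+\tfrac14\vp_0^4$, which determines $a\ge0$ uniquely (if the right side is $0$ then $\vp\equiv0$ and any $b$ with the trivial solution works). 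With $a$ fixed, $\cn(b)=\vp_0/(\sqrt2\,a)$ must be solved for $b$; since $\cn$ maps $\mathbb{R}$ onto $[-1,1]$ and $|\vp_0/(\sqrt2 a)|\le 1$ by the energy identity, such $b$ exists, and the sign of $\dot\vp_0$ (equivalently the sign of $\sn(b)\dn(b)$, and $\dn>0$ always) picks out $b$ within a period. Finally, uniqueness of the solution to the initial value problem is just Picard--Lindelöf applied to the locally Lipschitz right-hand side $-\tfrac12\vp^3$, so the explicit formula is the solution.

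The main obstacle I anticipate is purely computational rather than conceptual: getting the Jacobi-function identities and the modulus $k^2=\tfrac12$ to line up so that $\frac{d^2}{ds^2}\bigl(\sqrt2\,a\,\cn(as+b)\bigr)$ collapses to $-\tfrac12\bigl(\sqrt2\,a\,\cn(as+b)\bigr)^3$. I would handle this by first deriving the first integral $\vp_s^2=\tfrac14(8a^4-\vp^4)$ intrinsically, then differentiating it once more to get $2\vp_s\vp_{ss}=-\vp^3\vp_s$, so that $\vp_{ss}=-\tfrac12\vp^3$ on the (dense, then all by continuity) set where $\vp_s\ne0$ — this sidesteps ever differentiating $\cn$ twice by hand. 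The matching of constants is then elementary. No step requires anything beyond the existence/uniqueness theorem for ODEs and elementary properties of $\cn,\sn,\dn$ at modulus $1/\sqrt2$, all of which are standard.
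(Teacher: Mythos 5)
The paper offers no proof of this proposition at all --- it is quoted verbatim from \cite[Proposition 3.3]{Lin96} --- so there is no in-paper argument to compare against. Your self-contained verification (first integral, Jacobi $\cn$ ansatz at modulus $1/\sqrt{2}$, matching of constants, Picard--Lindel\"of for uniqueness) is the standard and correct route, and every structural step is sound. The degenerate case $E=0$, the sign adjustment of $b$ via $b\mapsto -b$, and the global-existence point (the energy bound prevents blow-up, so local uniqueness propagates) are all handled or handleable exactly as you indicate.

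There is, however, one concrete error in the bookkeeping, precisely at the spot you flagged as delicate. The conserved quantity is $E=\vp_s^2+\tfrac14\vp^4=\dot\vp_0^2+\tfrac14\vp_0^4$, but for the ansatz $\vp(s)=\sqrt2\,a\cn(as+b)$ with $k^2=\tfrac12$ one has $\vp_s=-\sqrt2\,a^2\sn\dn$ and
\begin{align*}
\sn^2\dn^2=(1-\cn^2)\Big(1-\tfrac12(1-\cn^2)\Big)=\tfrac12(1-\cn^4),
\qquad\text{so}\qquad
\vp_s^2=a^4(1-\cn^4)=a^4-\tfrac14\vp^4 .
\end{align*}
Hence $E=a^4$, not $E=2a^4$: the relation $\vp_s^2=\tfrac14(8a^4-\vp^4)$ that you propose to verify is \emph{not} satisfied by the $\cn$ ansatz (its amplitude would have to be $2^{3/4}a$ rather than $\sqrt2\,a$), and the verification step as written would fail. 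The wrong normalization also breaks the matching step: you need $|\vp_0/(\sqrt2\,a)|\le1$, i.e.\ $\tfrac14\vp_0^4\le a^4$, which is automatic when $a^4=\dot\vp_0^2+\tfrac14\vp_0^4$ but false in general when $a^4=E/2$. With the correct choice $a=(\dot\vp_0^2+\tfrac14\vp_0^4)^{1/4}$ everything goes through: $\cn(b)=\vp_0/(\sqrt2\,a)$ is solvable, the identity $2a^4\sn^2(b)\dn^2(b)=a^4-\tfrac14\vp_0^4=\dot\vp_0^2$ forces the second matching condition up to sign, and the sign is fixed by replacing $b$ with $-b$ if necessary. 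So the gap is a single misnormalized constant, fully repairable within your own scheme, not a flaw in the method.
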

%%%%%%%%%%%%%%%%%%%%%%%%%%%%%%%%%%%%%%
Here ${\rm cn}$,  ${\rm sn}$  and  ${\rm dn}$ denote Jacobi's elliptic functions with modulus $1/\sqrt{2}$.
Set
\[ K(\tfrac{1}{\sqrt{2}}):= \int_0^1\frac{1}{\sqrt{(1-x^2)(1-\frac{1}{2}x^2)}}\dx, \]
which is called the elliptic integral of the first kind of modulus $1/\sqrt{2}$. 
Since
\[
{\rm cn}(K(\tfrac{1}{\sqrt{2}}))=0,  \ \ {\rm sn}(K(\tfrac{1}{\sqrt{2}}))=1  \ \ \text{and}\ \  {\rm dn}(K(\tfrac{1}{\sqrt{2}}))=\frac{1}{\sqrt{2}}
\] 
hold, $b$ in \eqref{S0728-1} coincides with $K(\tfrac{1}{\sqrt{2}})$ if we take $\vp_0=0$. 
For more details of elliptic functions we refer the reader to \cite{Byrd}.

%%%%%%%%%%%%%%%%%%%%%%%%%%%%%%%%%%%%%%
\begin{definition}\label{S0727def}
Let $\gamma_U : [0,1/2] \to \R^2$ denote the curve $(x, U_0(x))$. 
Let $s$ and $\kappa_U$ denote the arclength parameter and the curvature 
of $\gamma_U$, respectively.
\end{definition}
%%%%%%%%%%%%%%%%%%%%%%%%%%%%%%%%%%%%%%
%%%%%%%%%%%%%%%%%%%%%%%%%%%%%%%%%%%%%%
\begin{lemma} \label{Slem:0727-2} 
\begin{align}
L_U&:=\int_0^{\frac{1}{2}} \sqrt{1+U_0'(x)^2}\dx
=\frac{1}{2c_0}\int_{\R}\frac{dt}{(1+t^2)^{\frac{3}{4}}},  \label{S0727-a}\\
\vk_U(s)&=c_0\,{\rm cn}\left( \tfrac{c_0}{\sqrt{2}}s+K(\tfrac{1}{\sqrt{2}}) \right). \label{S0727-b}
\end{align}
\end{lemma}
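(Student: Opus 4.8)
The plan is to base the whole argument on the pointwise formula
\begin{align*}
U_0'(x)=G^{-1}\!\Big(\tfrac{c_0}{2}-c_0 x\Big),\qquad 0<x\le\tfrac12,
\end{align*}
which I would verify by differentiating the definition \eqref{S0727-2} of $U_0$ and using $\frac{d}{dx}G^{-1}(x)=(1+G^{-1}(x)^2)^{5/4}$ from \eqref{Seq:1.05}. Differentiating once more gives $U_0''=-c_0(1+(U_0')^2)^{5/4}$, hence $U_0''/(1+(U_0')^2)^{5/4}\equiv-c_0$, so the curvature along $\gamma_U$ is
\begin{align*}
\kappa_U=\frac{U_0''}{(1+(U_0')^2)^{3/2}}=-c_0\big(1+(U_0')^2\big)^{-1/4}.
\end{align*}
Moreover $y:=G(U_0')=\tfrac{c_0}{2}-c_0 x$ is affine, so $y''\equiv0$; since the expression differentiated in \eqref{Seq:4.3} equals $2y''/(1+(U_0')^2)^{5/4}$, it vanishes identically, so \eqref{Seq:4.3} holds, and hence (reading the reduction in \eqref{Seq:2.02} backwards) $U_0$ solves \eqref{Seq:1.1} on $(0,\tfrac12)$. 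By the statement following \eqref{S0727-3}, $\kappa_U$ then solves \eqref{S0727-3} on $(0,L_U)$.

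For \eqref{S0727-a} I would change variables twice in $L_U=\int_0^{1/2}\sqrt{1+U_0'(x)^2}\dx$. The substitution $\tau=\tfrac{c_0}{2}-c_0 x$ gives $L_U=\frac1{c_0}\int_0^{c_0/2}\sqrt{1+G^{-1}(\tau)^2}\,d\tau$, and then $t=G^{-1}(\tau)$, for which $d\tau=(1+t^2)^{-5/4}\dt$ and $\tau\in(0,c_0/2)$ corresponds to $t\in(0,\infty)$, gives
\begin{align*}
L_U=\frac1{c_0}\int_0^{\infty}\frac{\sqrt{1+t^2}}{(1+t^2)^{5/4}}\dt
=\frac1{c_0}\int_0^{\infty}\frac{\dt}{(1+t^2)^{3/4}}
=\frac1{2c_0}\int_{\R}\frac{\dt}{(1+t^2)^{3/4}},
\end{align*}
the last equality by evenness of the integrand; in particular $L_U<\infty$.

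For \eqref{S0727-b} I would pin down the two constants in Proposition~\ref{Sprop:0727-1}. Differentiating $\kappa_U=-c_0(1+(U_0')^2)^{-1/4}$ and using $U_0''=-c_0(1+(U_0')^2)^{5/4}$ yields the identity $\frac{d}{dx}\kappa_U=-\tfrac{c_0^2}{2}U_0'$, which together with $\frac{dx}{ds}=(1+(U_0')^2)^{-1/2}$ gives
\begin{align*}
\frac{d\kappa_U}{ds}=-\frac{c_0^2}{2}\,\frac{U_0'(x)}{\sqrt{1+U_0'(x)^2}}.
\end{align*}
Letting $x\downarrow0$, where $U_0'(x)=G^{-1}(\tfrac{c_0}{2}-c_0 x)\to+\infty$ and $s\to0$ (recall $\gamma_U$ is smooth as a curve and, to be consistent with \eqref{S0727-b}, $s=0$ corresponds to $x=0$), I obtain the limiting endpoint data $\kappa_U(0)=0$ and $\frac{d\kappa_U}{ds}(0)=-\tfrac{c_0^2}{2}$. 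Since by Proposition~\ref{Sprop:0727-1} every solution of \eqref{S0727-3} has the form $\sqrt{2}\,a\,\cn(as+b)$, matching these data as $s\to0^+$ forces $\cn(b)=0$, hence $b=K(\tfrac1{\sqrt2})$ (so $\sn(b)=1$, $\dn(b)=\tfrac1{\sqrt2}$), and then $-\sqrt{2}\,a^2\sn(b)\dn(b)=-a^2=-\tfrac{c_0^2}{2}$, i.e.\ $a=\tfrac{c_0}{\sqrt2}$. As $\sqrt2\,a=c_0$, substitution gives $\kappa_U(s)=c_0\,\cn\!\big(\tfrac{c_0}{\sqrt2}s+K(\tfrac1{\sqrt2})\big)$, which is \eqref{S0727-b}.

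Nothing in this is genuinely hard; the two points that need care are the verification that $U_0$ solves \eqref{Seq:1.1} (so that \eqref{S0727-3} is available for $\kappa_U$) and the bookkeeping when matching constants, where one must track the orientation of arclength and the conventions for $\cn,\sn,\dn$ at modulus $1/\sqrt2$. If one prefers to avoid the singular endpoint $x=0$, one may instead impose the data at $x=\tfrac12$, where $U_0'(\tfrac12)=0$ gives $\kappa_U=-c_0$ and $\frac{d\kappa_U}{ds}=0$, obtaining $\kappa_U(s)=-c_0\cn\!\big(\tfrac{c_0}{\sqrt2}(s-L_U)\big)$; reconciling this with \eqref{S0727-b} then requires $\cn(u+2K)=-\cn(u)$ together with the identity $\tfrac{c_0}{\sqrt2}L_U=K(\tfrac1{\sqrt2})$, which follows from \eqref{S0727-a} and the classical value $K(\tfrac1{\sqrt2})=\Gamma(1/4)^2/(4\sqrt\pi)$ --- an identity the first approach sidesteps.
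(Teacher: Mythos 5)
Your proposal is correct and follows essentially the same route as the paper: the explicit formulas $U_0'=G^{-1}(\tfrac{c_0}{2}-c_0x)$ and $U_0''=-c_0(1+(U_0')^2)^{5/4}$, the same change of variables for \eqref{S0727-a}, and identification of $\vk_U$ via the initial data $\vk_U(0)=0$, $\tfrac{d}{ds}\vk_U(0)=-\tfrac{c_0^2}{2}$ together with Proposition~\ref{Sprop:0727-1}. The only (cosmetic) difference is that you verify that $\vk_U$ solves \eqref{S0727-3} indirectly, by checking that $U_0$ satisfies \eqref{Seq:1.1} through \eqref{Seq:4.3} and the affineness of $G(U_0')$, whereas the paper computes $\tfrac{d^2}{ds^2}\vk_U$ directly; both computations are equally short and valid.
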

%%%%%%%%%%%%%%%%%%%%%%%%%%%%%%%%%%%%%%
\begin{proof}
To begin with, for $U_0$ defined by \eqref{S0727-2} it holds that
\begin{align}
U'_0(x) &= G^{-1}\left( \frac{c_0}{2}-c_0 x \right), \notag \\
U''_0(x) &= -c_0 \left( 1+G^{-1}(\tfrac{c_0}{2}-c_0 x)^2 \right)^{\frac{5}{4}}=-c_0\left( 1+ U'_0(x)^2 \right)^{\frac{5}{4}},  \label{S0727-4}
\end{align}
for $x\in(0,{1}/{2}]$.
Then we infer from the change of variables $G^{-1}(\tfrac{c_0}{2}-c_0 x)=t$ that
\begin{align*}
L_U&=\int_0^{\frac{1}{2}}\sqrt{1+G^{-1}(\tfrac{c_0}{2}-c_0 x)^2 }\dx \\
&=\int_{\infty}^0\sqrt{1+t^2}  \frac{dt}{(-c_0)(1+t^2)^{\frac{5}{4}}}
=\frac{1}{2c_0}\int_{\R}\frac{dt}{(1+t^2)^{\frac{3}{4}}},
\end{align*}
which is the desired formula \eqref{S0727-a}. %indent

Next we show \eqref{S0727-b}.
Recalling the definition of $\vc_U$, by \eqref{S0727-4} we have
\begin{align}\label{S0727-5}
\vk_U(x)=\frac{U''_0(x)}{(1+U'_0(x)^2)^{\frac{3}{2}}} = -\frac{c_0}{(1+U'_0(x)^2)^{\frac{1}{4}}}.
\end{align}
Since $s(x)=\int_0^x \sqrt{1+U'_0(y)^2}\dy$, 
 it holds that
\begin{align}\label{S0727-6}
\frac{d}{ds} \vk_U(s)&= \frac{d\vk_U}{dx}\frac{dx}{ds}=-\frac{c_0^2}{2\sqrt{1+U'_0(x)^2}}U_0'(x), \\
\frac{d^2}{ds^2} \vk_U(s)&=-\frac{c_0^2}{2}\frac{U''_0(x)}{(1+U'_0(x)^2)^{\frac{5}{2}}}, \notag
\end{align}
which in combination with \eqref{S0727-5} gives
\[ \frac{d^2}{ds^2}\vk_U(s) + \frac{1}{2}\vk_U(s)^3=0. \]
Moreover, thanks to \eqref{S0727-6} we obtain
\begin{align}\label{S0727-7}
\lim_{s\to0}\frac{d}{ds} \vk_U(s) = -\frac{c_0^2}{2}, 
\end{align}
and hence we find that
\begin{align}
\vk_U\ \ \text{is the solution of \eqref{S0727-3} with}\ \ \vp(0)=0 \ \ \text{and} \ \ \vp'(0) = -\frac{c_0^2}{2}.
\end{align}
Therefore Proposition~\ref{Sprop:0727-1} yields \eqref{S0727-b}. 
\end{proof}

Let us turn to $u(x;\va)$. 
To begin with, we prepare the following lemma.
%%%%%%%%%%%%%%%%%%%%%%%%%%%%%%%%%%%%%%
\begin{lemma} \label{Slem:0727-3} 
Let $u(x;\va)$ be the solution of \eqref{Seq:1.1} with \eqref{Su_*}.
Then it holds that
\begin{align}
&L_{\va}:=\int_0^{\frac{1}{2}} \sqrt{1+u'(x;\va)^2}\dx =\frac{1}{2I(\va)}\int_0^{\va}\frac{\sqrt{\va}}{\sqrt{\va-x}} \frac{dx}{(1+x^2)^{\frac{3}{4}}}, \label{S0727-9}\\
&\lim_{\va\to\infty}L_{\va}=L_U,  \label{S0727-10}
\end{align} 
where $I(\va)$ is defined by \eqref{Seq:1104-3}.
\end{lemma}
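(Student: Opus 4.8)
The plan is to establish the explicit formula \eqref{S0727-9} by the two changes of variables already carried out in the proof of Lemma~\ref{Slem:4.2}, and then to deduce the limit \eqref{S0727-10} by a dominated-convergence argument modelled on the proof of Proposition~\ref{Sprop:4.4}.

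For \eqref{S0727-9}, I would first recall from the proof of Lemma~\ref{Slem:4.2} that $u'(x;\va)=G^{-1}(y(x;\va))$, that $y(\cdot;\va)$ solves \eqref{Sy_a} with \eqref{Seq:4.06} and is strictly decreasing on $(0,\tfrac12)$ with $y(0;\va)=G(\va)$ and $y(\tfrac12;\va)=0$, and that, by \eqref{Seq:1103-3} with $\vb=\vb_*(\va)$,
\[
y'(x;\va)=-\frac{\sqrt{|\vb_*(\va)|}}{(1+\va^2)^{5/4}}\sqrt{2\va-2G^{-1}(y(x;\va))}.
\]
Since $u(0;\va)=0$, one has $L_\va=\int_0^{1/2}\sqrt{1+G^{-1}(y(x;\va))^2}\dx$. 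Substituting first $s=y(x;\va)$ and then $t=G^{-1}(s)$, so that $ds=(1+t^2)^{-5/4}\,dt$ cancels against $\sqrt{1+t^2}$ to leave a factor $(1+t^2)^{-3/4}$ — exactly the computation in the proof of Lemma~\ref{Slem:4.2}, with $G^{-1}(s)$ there replaced by $\sqrt{1+G^{-1}(s)^2}$ — this gives
\[
L_\va=\frac{(1+\va^2)^{5/4}}{\sqrt{|\vb_*(\va)|}}\int_0^{\va}\frac{1}{\sqrt{2\va-2t}}\,\frac{\dt}{(1+t^2)^{3/4}}.
\]
Inserting \eqref{S0726-4}, which by the definition \eqref{Seq:1104-3} of $I(\va)$ reads $|\vb_*(\va)|^{1/2}(1+\va^2)^{-5/4}=\sqrt{2}\,\va^{-1/2}I(\va)$, then yields \eqref{S0727-9} after simplifying $\sqrt{2\va-2t}=\sqrt{2}\sqrt{\va-t}$.

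For \eqref{S0727-10}, I would use that $I(\va)\to c_0/2$ by \eqref{Seq:4.30}, and that the remaining integral converges to $\int_0^\infty(1+x^2)^{-3/4}\dx=\tfrac12\int_{\R}(1+t^2)^{-3/4}\dt$. To see the latter, write $\sqrt{\va}/\sqrt{\va-x}=1/\sqrt{1-x/\va}$ and split at $x=\va/2$: on $(0,\va/2)$ the integrand is dominated by $\sqrt{2}\,(1+x^2)^{-3/4}\in L^1(0,\infty)$ and converges pointwise to $(1+x^2)^{-3/4}$, so dominated convergence applies; on $(\va/2,\va)$ the integrand is at most $(1+\va^2/4)^{-3/4}(1-x/\va)^{-1/2}$, whose integral equals $\sqrt{2}\,\va\,(1+\va^2/4)^{-3/4}=O(\va^{-1/2})\to0$. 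Combining these with \eqref{S0727-9} and \eqref{S0727-a} gives $L_\va\to\tfrac1{2c_0}\int_{\R}(1+t^2)^{-3/4}\dt=L_U$.

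The only delicate point is the tail near $x=\va$ in \eqref{S0727-10}: because the decay exponent is now $3/4$ instead of the $5/4$ occurring for $I(\va)$, the estimate from the proof of Proposition~\ref{Sprop:4.4} cannot be copied verbatim, but splitting at a fixed fraction of $\va$ together with the elementary identity $\int_{\va/2}^{\va}(1-x/\va)^{-1/2}\dx=\sqrt{2}\,\va$ repairs this, since $(1+x^2)^{-3/4}$ remains integrable at infinity. Everything else is a routine change of variables.
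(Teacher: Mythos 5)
Your proof is correct and follows essentially the same route as the paper: the identity \eqref{S0727-9} is obtained by the two substitutions $s=y(x;\va)$, $t=G^{-1}(s)$ exactly as in Lemma~\ref{Slem:4.2}, and the limit \eqref{S0727-10} by the splitting argument of Proposition~\ref{Sprop:4.4} together with \eqref{Seq:4.30} and \eqref{S0727-a}. Your worry about the tail is unnecessary, though harmless: the paper's split at $\va^{3/4}$ still works for the exponent $3/4$, since the tail is then bounded by $(1+\va^{3/2})^{-3/4}\cdot O(\va)=O(\va^{-1/8})\to 0$, but your split at $\va/2$ is an equally valid (and slightly more transparent) choice.
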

%%%%%%%%%%%%%%%%%%%%%%%%%%%%%%%%%%%%%%
\begin{proof}
Similar to Lemma~\ref{Slem:4.2} we have 
\begin{align*}
L_{\va}&=\int_0^{\frac{1}{2}} \sqrt{1+u'(x;\va)^2}\dx \\
&=\int^0_{G(\va)} \sqrt{1+G^{-1}(s)^2}\left(-\frac{(1+\va^2)^{\frac{5}{4}}}{|\vb_*(\va)|^{\frac{1}{2}}}  \right)\frac{ds}{\sqrt{2\va-2G^{-1}(s)}} \\
&=\frac{(1+\va^2)^{\frac{5}{4}}}{|\vb_*(\va)|^{\frac{1}{2}}} \int_0^{\va} \sqrt{1+x^2}\frac{1}{\sqrt{2\va-2x}}\frac{dx}{(1+x^2)^{\frac{5}{4}}} \\
&=\frac{1}{2I(\va)}\int_0^{\va} \frac{\sqrt{\va}}{\sqrt{\va-x}}\frac{dx}{(1+x^2)^{\frac{3}{4}}},
\end{align*}
where we used \eqref{S0726-4} and \eqref{Seq:1104-3} in the last equality. 
Hence we obtain \eqref{S0727-9}.
By the same argument as in \eqref{Seq:4.30} it follows that 
\[ \lim_{\va\to\infty}\int_0^{\va}  \frac{\sqrt{\va}}{\sqrt{\va-x}}\frac{dx}{(1+x^2)^{\frac{3}{4}}} =\int_0^{\infty}\frac{dx}{(1+x^2)^{\frac{3}{4}}} .
\]
Combining this with \eqref{Seq:4.30} and \eqref{S0727-a}, we obtain \eqref{S0727-10}.
\end{proof}

For each $\va>0$, let $k_{\va}$ be the solution of \eqref{S0727-3} with 
\begin{align*}
\ \vp(0)=0 \quad \text{and} \quad \vp'(0) = -\frac{2\sqrt{1+\va^2}}{\va}I(\va)^2.
\end{align*}
Then Proposition~\ref{Sprop:0727-1} yields
\begin{align}\label{S0806-3}
k_{\va}(\vt)=\frac{2I(\va)\sqrt[4]{1+\va^2}}{\sqrt{\va}} {\rm cn}\left(\frac{\sqrt{2}I(\va)\sqrt[4]{1+\va^2}}{\sqrt{\va}} \vt + K(\tfrac{1}{\sqrt{2}}) \right).
\end{align}
Using this $k_{\va}$, we define $\vc_{\va}\in C^{\infty}([0,L_{\va}];\R^2)$ by 
\begin{align} \label{S0728-2}
\vc_{\va}(\vt):=Q_{\va}\int_0^{\vt} \Big(  \cos\theta_{\va}(t),  \sin\theta_{\va}(t)  \Big)\dt \quad \vt\in[0,L_{\va}], 
\end{align}
where $L_{\va}$ is given by \eqref{S0727-9} and 
\[
\theta_{\va}(t):= \int_0^t k_{\va}(z)dz,  \quad
Q_{\va}:= \frac{1}{\sqrt{1+\va^2}}\left(
    \begin{array}{cc}
      1  & -\va \\
      \va & 1 
    \end{array}
  \right).
\]
Then we notice that $\vt\in[0,L_{\va}]$ is the arclength parameter of $\vc_{\va}$, and that $k_{\va}$ stands for the curvature of $\vc_{\va}$.
Moreover, it follows that 
\begin{align}\label{S0806-1}
\vc_{\va}(0)=(0,0), \quad \vc'_{\va}(0)=\left(\frac{1}{\sqrt{1+\va^2}}, \frac{\va}{\sqrt{1+\va^2}}\right).
\end{align}

We shall show that the image of $\vc_{\va}$ is equal to that of the curve $(x,u(x;\va))$.
As mentioned before, since $u(x;\va)$ satisfies \eqref{Seq:1.1},
the curvature $\vk_{\va}$ of $(x,u(x;\va))$ satisfies \eqref{S0727-3} after reparametrization.
By \eqref{Su_*}, the initial conditions in terms of $\vk_{\va}$ are given by
\begin{align*}
\vk_{\va}(0)&=\frac{u''(0;\va)}{(1+u'(0;\va)^2)^{\frac{3}{2}}} = 0, \\
\dot{\vk}_{\va}(0)
&=\frac{1}{\sqrt{1+u'(x;\va)^2}}\frac{d}{dx}\left(\frac{u''(x;\va)}{(1+u'(x;\va)^2)^{\frac{3}{2}}} \right)\bigg|_{x=0} \\
&=\frac{1}{\sqrt{1+u'(0;\va)^2}} \left(\frac{u'''(0;\va)}{(1+u'(0;\va)^2)^{\frac{3}{2}}}- 5\frac{u''(0;\va)^2u'(0;\va)}{(1+u'(0;\va)^2)^{\frac{5}{2}}}\right)\\
&=\frac{\vb_*(\va)}{(1+\va^2)^2}=-\frac{2\sqrt{1+\va^2}}{\va}I(\va)^2,
\end{align*} 
where we used \eqref{S0726-4} and \eqref{Seq:1104-3} in the last equality and \ $\dot{}$\  denotes the derivative with respect to the arclength parameter of $(x,u(x;\va))$.
Thus we find that 
\begin{align*}
 \vk_{\va}\ \text{is the solution of \eqref{S0727-3} with}\ \vp(0)=0 \ \text{and} \ \vp'(0) = -\frac{2\sqrt{1+\va^2}}{\va}I(\va)^2,
\end{align*}
which implies that  $\vk_{\va}=k_{\va}$ in $[0,L_{\va}]$.
Moreover, combining \eqref{S0806-1} with $u(0;\va)=0$ and $u'(0;\va)=\va$, we infer from fundamental theorem of plane curves that the image of $\vc_{\va}$ is equal to that of the curve $(x,u(x;\va))$. 
Then we can show the following theorem, which implies $(x,u(x;\va))$ converges to $U_0(x)$ in a sense of planar curves.
%%%%%%%%%%%%%%%%%%%%%%%%%%%%%%%%%%%%%%
\begin{theorem} \label{Sthm:0813}
Let $\vc_{\va}$, $\vc_{U}$ be the curve defined by \eqref{S0728-2}, Definition~\ref{S0727def}, respectively.
Then 
\begin{align*} %\label{S0806-4}
 \vc_{\va}(\tfrac{L_{\va}}{L_U}s) \to \vc_{U}(s)  \quad \text{uniformly on} \ [0,L_{U}]  
\end{align*}
as  $\va\to\infty$.
\end{theorem}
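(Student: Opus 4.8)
The plan is to deduce the convergence of the curves from the convergence of their curvature data, via the Frenet representation. First I would record, exactly as in the derivation of \eqref{S0728-2}, that if $\gamma_U(s)$ denotes the point of the curve $\gamma_U$ (Definition~\ref{S0727def}) at arclength $s$, then
\[
\gamma_U(s) = Q_U\int_0^s\big(\cos\theta_U(t),\sin\theta_U(t)\big)\dt, \qquad s\in[0,L_U],
\]
where $\theta_U(t):=\int_0^t\vk_U(z)\,dz$, the curvature $\vk_U$ is the function from \eqref{S0727-b}, and $Q_U:=\left(\begin{smallmatrix}0&-1\\1&0\end{smallmatrix}\right)$; this is the fundamental theorem of plane curves, once one notes that $\gamma_U$ issues from the origin with the upward vertical tangent, since $U_0'(0{+})=G^{-1}(\tfrac{c_0}{2})=+\infty$. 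Writing $b_\va:=\tfrac{2I(\va)(1+\va^2)^{1/4}}{\sqrt{\va}}$ and $a_\va:=b_\va/\sqrt{2}$, formula \eqref{S0806-3} reads $k_\va(\vt)=b_\va\,\cn\!\big(a_\va\vt+K(\tfrac{1}{\sqrt{2}})\big)$, and $Q_\va$ is the rotation by $\arctan\va$.

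Next I would pass to the limit in these data. By Lemma~\ref{Slem:0727-3}, $L_\va\to L_U$, so $\lambda_\va:=L_\va/L_U\to1$ and $L_\va\le L_*$ for all large $\va$ and some finite $L_*$; it is enough to treat such $\va$. From \eqref{Seq:4.30} we have $I(\va)\to c_0/2$, while $\tfrac{(1+\va^2)^{1/4}}{\sqrt{\va}}=(1+\va^{-2})^{1/4}\to1$, whence $b_\va\to c_0$ and $a_\va\to c_0/\sqrt{2}$. Since $\tfrac{d}{du}\cn(u)=-\sn(u)\dn(u)$ and $|\sn|,|\dn|\le1$, the function $\cn$ is globally $1$-Lipschitz, so
\[
\sup_{\vt\in[0,L_*]}|k_\va(\vt)-\vk_U(\vt)|\le|b_\va-c_0|+c_0\,L_*\,\big|a_\va-\tfrac{c_0}{\sqrt{2}}\big|\longrightarrow0\qquad(\va\to\infty);
\]
integrating gives $\theta_\va\to\theta_U$ uniformly on $[0,L_*]$, hence $(\cos\theta_\va,\sin\theta_\va)\to(\cos\theta_U,\sin\theta_U)$ uniformly there, and also $Q_\va\to Q_U$.

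Finally, to pass to the curves, fix $s\in[0,L_U]$ and abbreviate $e_\va(t):=(\cos\theta_\va(t),\sin\theta_\va(t))$ and $e_U(t):=(\cos\theta_U(t),\sin\theta_U(t))$. I would decompose
\begin{align*}
\gamma_\va(\lambda_\va s)-\gamma_U(s)
&=(Q_\va-Q_U)\int_0^{\lambda_\va s}e_\va(t)\dt+Q_U\int_0^{\lambda_\va s}\big(e_\va(t)-e_U(t)\big)\dt\\
&\qquad+Q_U\int_s^{\lambda_\va s}e_U(t)\dt,
\end{align*}
and estimate the three terms, using $|e_\va|\equiv|e_U|\equiv1$, respectively by $\|Q_\va-Q_U\|\,L_*$, by $L_*\,\sup_{[0,L_*]}|e_\va-e_U|$, and by $|\lambda_\va-1|\,L_U$. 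All three bounds are independent of $s$ and tend to $0$ as $\va\to\infty$ by the previous step, which is exactly the asserted uniform convergence on $[0,L_U]$.

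The analytic content here is slight: the only genuine inputs are $I(\va)\to c_0/2$ (already available from \eqref{Seq:4.30}) and the elementary global Lipschitz bound on $\cn$. What needs care is bookkeeping — correctly matching the base point and the initial tangent of the arclength-parametrized $\gamma_U$ with those of $\gamma_\va$, so as to identify $Q_U$, and then keeping the three error contributions (the rotations $Q_\va-Q_U$, the integrands $e_\va-e_U$, and the endpoint shift from $\lambda_\va s$ to $s$) simultaneously small uniformly in $s$.
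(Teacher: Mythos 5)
Your proposal is correct and follows essentially the same route as the paper: both represent $\gamma_U$ via the angle function with the rotation $Q_U=\left(\begin{smallmatrix}0&-1\\1&0\end{smallmatrix}\right)$, deduce $k_\va\to\vk_U$ and $\theta_\va\to\theta_U$ uniformly from $I(\va)\to c_0/2$, use $Q_\va\to Q_U$ and $L_\va\to L_U$, and then split $\gamma_\va(\tfrac{L_\va}{L_U}s)-\gamma_U(s)$ into pieces controlled uniformly in $s$ (the paper uses a two-term split $I_1+I_2$, you separate the rotation error as a third term, which is only a bookkeeping difference). Your explicit Lipschitz bound on $\cn$ and the uniform bound $L_\va\le L_*$ are fine refinements of details the paper leaves implicit.
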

%%%%%%%%%%%%%%%%%%%%%%%%%%%%%%%%%%%%%%
\begin{proof}
It follows from $\vc_{U}(0)=(0,0)$ and $\vc_{U}'(0)=(0,1)$ that
\begin{align}\label{S0728-3}
\vc_{U}(s)=\left(
    \begin{array}{cc}
      0  & -1 \\
      1 & 0 
    \end{array}
  \right)\int_0^s \Big(  \cos\theta_{U}(t),  \sin\theta_{U}(t)  \Big)\dt \quad s\in[0,L_U], 
\end{align}
where $\theta_{U}(t):= \int_0^t\vk_{U}(z)dz$.
Since it follows from \eqref{Seq:4.30} that
\begin{align*}
k_{\va}(z)&= \frac{2I(\va)\sqrt[4]{1+\va^2}}{\sqrt{\va}} {\rm cn}\left(\frac{\sqrt{2}I(\va)\sqrt[4]{1+\va^2}}{\sqrt{\va}} z+ K(\tfrac{1}{\sqrt{2}}) \right) \\
&\to c_0 \,{\rm cn}\left(\tfrac{c_0}{\sqrt{2}} z+ K(\tfrac{1}{\sqrt{2}}) \right) =\vk_U(z) \quad \text{uniformly on}\quad [0,L_U]
\end{align*}
as $\va\to\infty$, we observe that 
\begin{align}\label{S0813-1}
\theta_{\va}(t) \to \theta_{U}(t)\quad \text{uniformly on}\quad [0,L_U].
\end{align}
Moreover, it follows from \eqref{S0728-2} that for each $s\in [0,L_{U}]$
\begin{align*}
\vc_{\va}(\tfrac{L_{\va}}{L_U}s)
&=Q_{\va}\int_0^{\tfrac{L_{\va}}{L_U}s} \Big(  \cos\theta_{\va}(t),  \sin\theta_{\va}(t)  \Big)\dt \\
&=Q_{\va}\int_0^{s} \Big(  \cos\theta_{\va}(t),  \sin\theta_{\va}(t)  \Big)\dt +Q_{\va}\int_s^{\tfrac{L_{\va}}{L_U}s} \Big(  \cos\theta_{\va}(t),  \sin\theta_{\va}(t)  \Big)\dt\\
&=: I_1 + I_2.
\end{align*}
It follows from \eqref{S0728-3} and \eqref{S0813-1} that 
\begin{equation*}
I_1 \to \gamma_U(s) \quad \text{as} \quad \text{uniformly on}\quad [0,L_U]
\end{equation*}
as $\va\to\infty$.
Furthermore, by Lemma~\ref{Slem:0727-3} we see that 
\begin{equation*}
I_2 \to (0,0) \quad \text{as} \quad \alpha \to \infty. 
\end{equation*}
Therefore Theorem~\ref{Sthm:0813} follows. 
\end{proof}

\bibliographystyle{siam}
\bibliography{ref_Yoshizawa_siam}

\end{document}